\newtheorem{Lemma}{Lemma}[section]\newcommand{\bel}{\begin{Lemma}}\newcommand{\eel}{\end{Lemma}}
\newtheorem{Proposition}[Lemma]{Proposition}\newcommand{\bprop}{\begin{Proposition}}\newcommand{\eprop}{\end{Proposition}}
\newtheorem{Theorem}[Lemma]{Theorem}\newcommand{\bthe}{\begin{Theorem}}\newcommand{\ethe}{\end{Theorem}}
\newtheorem{Remark}[Lemma]{Remark}\newcommand{\beR}{\begin{Remark}\rm}\newcommand{\eeR}{\end{Remark}}
\newtheorem{Definition}[Lemma]{Definition}\newcommand{\bed}{\begin{Definition}}\newcommand{\eed}{\end{Definition}}
\newtheorem{Example}[Lemma]{Example}\newcommand{\bex}{\begin{Example}\rm}\newcommand{\eex}{\end{Example}}
\newtheorem{Corollary}[Lemma]{Corollary}\newcommand{\bcor}{\begin{Corollary}}\newcommand{\ecor}{\end{Corollary}}
\def\bull{\vrule height .9ex width .9ex depth -.1ex }
\def\bpr{{\em Proof:~}}\def\epr{$\bull$\\}
\def\isom{\xrightarrow{\sim}}
\def\la{\lambda}\def\si{{\sigma}}
\def\cN{\mathcal{N}}
\def\ud{{\underline{d}}}
\newcommand{\beq}{\begin{equation}}\newcommand{\eeq}{\end{equation}}
\newcommand{\ber}{\begin{array}{lll}}\newcommand{\eer}{\end{array}}
\newcommand{\bpm}{\begin{pmatrix}}\newcommand{\epm}{\end{pmatrix}}
\theoremstyle{definition}
\newcommand{\g}{\ensuremath{\mathfrak g}}
\newcommand{\m}{\ensuremath{\mathsf M}}
\newcommand{\Z}{\ensuremath{{\mathbb Z}}}
\newcommand{\cO}{\ensuremath{\mathcal O}}
\newcommand{\R}{\ensuremath{\mathbb R}}
\newcommand{\C}{\ensuremath{\mathbb C}}
\def\P{\ensuremath{\mathbb P}}
\renewcommand{\Pr}{\ensuremath{\mathbb P}}
\def\suml{\sum\limits}
\def\capl{\mathop\cap\limits}
\def\prodl{\prod\limits}
\def\smin{\setminus}\def\sset{\subset}
\def\li{~\\ $\bullet$ }
\author{Tatyana Barron}
\address{T. Barron, Department of Mathematics,
University of Western Ontario, London, Ontario N6A 5B7, Canada }
\email{tatyana.barron@uwo.ca}
\author{Dmitry Kerner}
\address{D. Kerner, Department of Mathematics, Ben-Gurion University of the Negev, P.O.B. 653, Be'er Sheva 84105, Israel}
\email{dmitry.kerner@gmail.com}
\author{Marina Tvalavadze}
\address{M. Tvalavadze, Fields Institute for Research in Mathematical Sciences,
Toronto, Ontario, M5T 3J1, Canada}
\email{mtvalava@fields.utoronto.ca}
\thanks{2010 Mathematics Subject Classification: Primary 17B70, Secondary 14F45}
\thanks{Research of the first author is supported in part
by NSERC}
\thanks{D.K. was partially supported by the grant FP7-People-MCA-CIG, 334347}
\thanks{The first author previously published papers under
the name Tatyana Foth}
\keywords{Filiform Lie algebras, graded Lie algebras, projective varieties, topology, classification}
\title{On varieties of Lie algebras of maximal class}
\begin{document}
\sloppy

\maketitle
\begin{abstract}
We study complex projective varieties that parametrize
(finite-dimensional) filiform Lie algebras over ${\mathbb C}$,
using equations derived by Millionshchikov. In the
infinite-dimensional case we concentrate our attention on
${\mathbb N}$-graded Lie algebras of maximal class. As shown by A.
Fialowski (see also \cite{shalev:97, mill:04}) there are only
three isomorphism types of $\mathbb{N}$-graded Lie algebras
$L=\oplus^{\infty}_{i=1} L_i$ of maximal class generated by $L_1$
and $L_2$, $L=\langle L_1, L_2 \rangle$.  Vergne described the
structure of these algebras with the property $L=\langle L_1
\rangle$. In this paper we study those generated by the first and
$q$-th components where $q>2$, $L=\langle L_1, L_q \rangle$. Under
some technical condition, there can only be one isomorphism type
of such algebras. For $q=3$ we fully classify them.  This gives a
partial answer to a question posed by Millionshchikov.
\end{abstract}

\section{Introduction}

There has been a lot of efforts to understand the algebraic
varieties that parametrize Lie algebras, in particular nilpotent
Lie algebras. The long list of literature on this subject
includes, in particular, \cite{ver:70}, \cite{kir:87},
\cite{hakim:91}, \cite{echar:08}.

A {\it filiform} Lie algebra is a nilpotent Lie algebra of maximal class
of nilpotency. A generalization of this concept, called {\it a Lie
algebra of maximal class}, and varieties of such algebras were
studied in \cite{mill:09}.

A Lie algebra $\g$ is called {\it residually nilpotent} if
$\cap_{i=1}^{\infty}\mathrm{g}^i=\{ 0\}$, where $\mathrm{g}^1
=\mathrm{g}$ and $\{ \mathrm{g}^i \}$ is the lower central series
of $\mathrm{g}$. A residually nilpotent Lie algebra $\mathrm{g}$
is called {\it a Lie algebra of maximal class} if $\Sigma
_i(\dim\, \mathrm{g}^i/\mathrm{g}^{i+1} -1)=1$. In the
finite-dimensional case these are exactly filiform Lie algebras.

An explicit system of quadratic equations that describes a variety
of filiform Lie algebras or of Lie algebras of maximal class is
provided in \cite{mill:09}, - this is one of the main results of
the paper. We use this system to study topology of the varieties
of $n$-dimensional filiform complex Lie algebras (Section
\ref{topology}).

In Section \ref{algebra} and in Appendix \ref{app2} we study
central extensions of  $\mathbb{N}$-graded filiform Lie algebras
with lacunas in grading. Although neither finite-dimensional
filiform Lie algebras nor infinite-dimensional Lie algebras of
maximal class have been classified up to an isomorphism they have
been extensively studied for the last few decades. In
\cite{mill:09}  Millionshchikov conjectured the existence of only
three isomorphism types of Lie algebras of maximal class generated
by the first and the $q$-th graded components (i.e. with lacunas
in grading from 2 to $q-1$). Using the  results on central
extensions from subsections 3.1 and 3.2 we prove Theorem
\ref{MainTheorem} and also classify $\mathbb{N}$-graded Lie
algebras of maximal class generated by the first and third graded
components. Our results coincide with computer calculations of
Vaughan-Lee who investigated the cases $q=3,4$.

Our main results are Theorems \ref{The.Case.n=10},
\ref{The.Case.n=11}, \ref{MainTheorem} and \ref{caseq3}.

{\bf Acknowledgements.} We are thankful to M. Vergne for telling
us about reference \cite{mill:09}. We thank A. Dhillon and M. Khalkhali for
brief but helpful discussions. We are grateful to the referee for suggestions.

\section{Topology of parameter spaces}\label{topology}

In this section all Lie algebras are finite-dimensional, over
$\C$.

Equations of the affine variety of $n$-dimensional
filiform complex Lie algebras are given in
\cite{mill:09} (equations (28), (29), (19), (30)).
We are going to consider, instead, complex
projective varieties $\m _n$ that are obtained from the varieties
in Theorem 4.3 \cite{mill:09} by removing the Abelian Lie algebra,
projectivizing, and "dropping" the variables that do not appear in
the equation - see a precise explanation below.

\subsection{General properties of $\m _n$}
\label{Mns}
The parameter space for $n\ge 9$ is defined by the following system of
quadratic equations on the variables $x_{j,s}$,
$s=0,...,n-5$, $j=2,...,\frac{[n-1]}{2}$,
(Theorem 4.3 \cite{mill:09}):
\li for n-odd: $\m _n=\Big\{F_{j,q,r}=0| \ 2\le
j< q,\ j+2q+r+1\le n,\quad
r\ge0\Big\}$
\li for n-even: $\m _n=\Bigg\{\ber F_{j,q,r}=0, \ 2\le j< q,\ j+2q+r+1<n,\   r\ge0\\
F_{j,q,r}+(-1)^{\frac{n}{2}-j-q}x_{-1}G_{j,q,r}=0,\ 2\le j< q,\ j+2q+1+r=n,\ r\ge0 \\
x_{-1}G_{j,q,-1}=0,\ 2\le j< q, \ j+2q=n\eer\Bigg\}$
where polynomials $F_{j,q,r}$ and $G_{j,q,r}$ are as follows:
$$
F_{j,q,r}=\sum_{t=0}^r \sum_{l=j}^{[\frac{j+q-1}{2}]} \sum_{m=q+1}^{q+[\frac{j+t}{2}]}
(-1)^{l-j+m-q}\begin{pmatrix}q-l-1 \cr l-j\end{pmatrix}
\begin{pmatrix}j+q-m+t-1 \cr m-q-1\end{pmatrix}
x_{l,t}x_{m,r-t}+
$$
$$
\sum_{t=0}^r \sum_{l=j}^{[\frac{j+q}{2}]} \sum_{m=q}^{q+[\frac{j+t}{2}]}
(-1)^{l-j+m-q}\begin{pmatrix}q-l \cr l-j\end{pmatrix}
\begin{pmatrix}j+q-m+t \cr m-q\end{pmatrix}
x_{l,t}x_{m,r-t}+
$$
$$
\sum_{t=0}^r \sum_{m=j}^{q+[\frac{j+t}{2}]}
(-1)^{m-j+1}\begin{pmatrix}2q-m+t \cr m-j\end{pmatrix}
x_{q,t}x_{m,r-t},
$$
$$
G_{j,q,r}=\sum_{l=j}^{[\frac{j+q-1}{2}]} (-1)^l\begin{pmatrix}q-l-1 \cr l-j\end{pmatrix}
x_{l,r+1}+
\sum_{l=j}^{[\frac{j+q}{2}]} (-1)^l\begin{pmatrix}q-l \cr l-j\end{pmatrix}
x_{l,r+1}
-(-1)^q x_{q,r+1}.
$$
The variable $x_{-1}$ is denoted by $x$ in \cite{mill:09}. Our
notation emphasizes the weight. The quadratic polynomials
$F_{j,q,r}$ are weighted homogeneous, they contain only monomials
of the form $x_{a_1,t}x_{a_2,r-t}$. The linear forms $G_{j,q,r}$
contain only monomials of the  form $x_{a,r+1}$. Thus, for $n$-odd
or for $n$-even and $x_{-1}=0$ the variables $x_{a,t}$ with
$t>n-9$ do not participate in the defining equations. Moreover,
even for $t\le n-9$ many of $x_{a,t}$ do not appear in the
defining equations. We shall always consider the "minimal" version
of $\m _n$, inside the projective space spanned by those $x_{a,t}$
that do appear. Adding more variables that do not participate in
the equations means taking the cone over the "minimal" $\m _n$.

\

\

The group $\C^*$ acts on $\m _n$ by weighted homogeneous scaling
\beq
\Big(x_{-1},\{x_{*,0}\},\{x_{*,1}\},\dots,\{x_{*,n-5}\}\Big)\to
\Big(\la^{-1}x_{-1},\{x_{*,0}\},\la\{x_{*,1}\},\dots,\la^{n-5}\{x_{*,n-5}\}\Big),\quad
\la\in\C^* \eeq This is seen directly, the defining equations are
equivariant. The points of each $\C^*$ orbit correspond to
isomorphic Lie algebras, as the action is induced by the rescaling
of the generators, $e_i\to \la^{i-1}e_i$.

Accordingly, the set of homogeneous coordinates splits into the
subsets $\{x_{*,s}\}_s$, coordinates of the same $\C^*$-weights.
The loci $\m_n\cap\{x_{*,i}=0,\ for\ i\neq j\}$ are invariant under
this $\C^*$ action. The parameter space gets decomposed into
'elementary pieces' as described in the Proposition below.
\bprop Let $n\ge 9$. \\
1. For $n$-odd, the open
dense part $\m_n\smin\{x_{*,0}=0\}$ deforms (homotopically) to
$\m_n\cap \{x_{*,i}=0,\ for\ i>0\}$. For $n$-even,
$\m_n\smin\{x_{-1}=0\}$ deforms (homotopically) to $\m _n\cap
\{x_{*,i}=0,\ for\ i\ge 0\}$, which is a point. In both cases the
open dense part $\m_n\smin\{x_{*,n-5}=0\}$ deforms (homotopically)
to $\m_n\cap \{x_{*,i}=0,\ for\ i<n-5\}$.
\\2. The topological Euler characteristic of $\m_n$ can be computed as
$\chi(\m_n)=\suml_{j=-1}^{n-5}\chi\Big(\m_n\cap\{x_{*,i}=0,\ for\
i\neq j\}\Big)$.
\\3. For $n$-odd $\m_n\cap\{x_{*,i}=0,\ for \ 0\le i\le\frac{n-9}{2}\}$ is isomorphic to the projective space
with homogeneous coordinates $\{x_{j,k}\}_{2\le
j\le\frac{n-1}{2}}^{\frac{n-9}{2}+1\le k\le n-5}$.
\\4.
$\m_n\cap\{x_{*,i}=0, \ for \ i\neq[\frac{n-9}{2}]\}=\{F_{2,3,2[\frac{n-9}{2}]}=0\}\sset\P(\{x_{j,[\frac{n-9}{2}]}\}_{j=2\dots[\frac{n-1}{2}]})$.
\\5. For $n>10$: $\m_n\cap\{x_{*,i}=0, \ for \ i\neq[\frac{n-9}{2}]-1\}=\{F_{2,3,n-11}=0=F_{2,4,n-11}\}$ for $n$-odd and $\{F_{2,3,n-12}=0=F_{2,4,n-12}=F_{3,4,n-12}\}$
for $n$-even. \eprop \bpr {\bf 1.} The case of odd $n$. Suppose at
least one of $x_{*,0}$ is not zero.
 Consider the flow $(x_{*,0},\la x_{*,1},\dots,\la^{n-5} x_{*,n-5})$, for $\la\in\C^*$.
This is a continuous  flow, for $\la\in\C$, well-defined on
$\m_n\smin\{x_{*,0}=0\}$.
 It provides the homotopy from $\m_n\smin\{x_{*,0}=0\}$, for $\la=1$, to $\m_n\cap \{x_{*,i}=0,\ for\ i>0\}$, for $\la=0$.
 The other statements in 1. are proved similarly.

{\bf 2.} Suppose $n$ is odd (the proof for $n$-even is similar).
We shall use 1. and similar statements:

\noindent $(\m_n\cap \{ x_{*,0}=0\} )-\{ x_{*,1}=0\}$ deforms homotopically
to $\m_n\cap \{ x_{*,i}=0 \ for \ i\neq 1\}$,

\noindent $(\m_n\cap \{ x_{*,0}=0, x_{*,1}=0\} )-\{ x_{*,2}=0\}$ deforms homotopically
to $\m_n\cap \{ x_{*,i}=0 \ for \ i\neq 2\}$, etc.

By the additivity of Euler characteristic
\beq\ber
\chi(\m_n)=\chi(\m_n\cap \{ x_{*,0}=0\} )+\chi(\m_n\backslash \{x_{*,0}=0\} )
=
\\
\chi(\m_n\cap \{ x_{*,0}=0=x_{*,1}\} )+
\chi((\m_n\cap \{ x_{*,0}=0\} ) \backslash \{ x_{*,1}=0\} )+
\chi(\m_n\cap\{ x_{*,i}=0 \ for \
i>0 \} )=\dots\\\dots= \suml_{j=0}^{n-5}\chi(\m_n\cap\{x_{*,i}=0,\
for\ i\neq j\})
\eer\eeq {\bf 3.} The polynomial $F_{j,q,r}$
consists of monomials $x_{*,t}x_{*,r-t}$, hence if $\{x_{*,i}=0,\
for \ 0\le i\le[\frac{n-9}{2}]\}$ and $r\le n-9$ then all the
relevant polynomials vanish. So, $\m_n\cap\{x_{*,i}=0,\ for \ 0\le
i\le[\frac{n-9}{2}]\}$ is the projective space, spanned by the
remaining coordinates.

{\bf 4. 5.} By direct check of the defining equations. \epr

\subsection{The parameter space for n=9,10,11}

In this subsection we quickly discuss $\m_9$, after that
compute Betti numbers of the components of $\m_{n}$ for $n=10,11$
and discuss geometric structure of these components.
Some facts that we use are summarized in Appendix \ref{app1}.

\subsubsection{n=9} The subvariety $\m_9\sset\Pr ^2$ is defined by the equation
\begin{equation}\label{F230}
2x_{2,0}x_{4,0}-3x_{3,0}^2+x_{3,0}x_{4,0}=0,
\end{equation}
where $x_{2,0}$, $x_{3,0}$, $x_{4,0}$ are the coefficients that
appear in the deformation cocycle. Remark: (\ref{F230}) is
consistent with the first equation in (22) \cite{mill:09}, in the
notations of \cite{mill:09} this is the equation $F_{2,3,0}=0$.

This subvariety (an algebraic curve of degree 2) is smooth. (Its
singular locus is $x_{2,0}=0=x_{4,0}=x_{3,0}$ i.e. it is empty.) By
the genus formula the genus is $(2-1)(2-2)/2=0$, thus $\m_9$ is
isomorphic to $\P^1$.

\subsubsection{n=10}
The subvariety $\m_{10}\sset\P(x_{-1},\{x_{j,s}\})$ is defined by
\beq \left\{ \begin{array}{lll} F_{2,3,0}: &
2x_{2,0}x_{4,0}-3x_{3,0}^2+x_{3,0}x_{4,0}=0
\\F_{2,3,1}+xG_{2,3,1}: &-2x_{2,0}x_{4,1}+7x_{3,0}x_{3,1}-x_{3,0}x_{4,1}-3x_{4,0}x_{2,1}-3x_{4,0}x_{3,1}+x_{-1}(2x_{2,2}+x_{3,2})=0
\\x_{-1}G_{2,4,-1}: & x_{-1}(2x_{2,0}-x_{3,0}-x_{4,0})=0 \end{array} \right.
\eeq where $x_{i,j}$ are the coefficients that appear in the
deformation cocycle.

Denote by $\m _{10}^{(0)}$ the component of $\m_{10}$ that
corresponds to $x_{-1}=0$. Its equations are
\begin{equation}
\label{firstcomp} \left\{ \begin{array}{ll}
2x_{2,0}x_{4,0}-3x_{3,0}^2+x_{3,0}x_{4,0}=0 \\
-2x_{2,0}x_{4,1}+7x_{3,0}x_{3,1}-x_{3,0}x_{4,1}-3x_{4,0}x_{2,1}
-3x_{4,0}x_{3,1}=0.  \end{array} \right.
\end{equation}
Note: the first equation is the same as (\ref{F230}), and the
second equation ($F_{2,3,1}=0$ in notations of \cite{mill:09})
coincides with the third equation of Example 4.6 \cite{mill:09}.

To simplify the formulas we make a linear change of coordinates:
$$\begin{array}{ccc}
z_{0,0}=2x_{2,0}+x_{3,0}   & z_{1,0}=x_{3,0} & z_{2,0}=x_{4,0} \\
z_{0,1}=3(x_{2,1}+x_{3,1}) & z_{1,1}=x_{3,1} & z_{2,1}=x_{4,1}
\end{array}$$
In the new coordinates $\m _{10}^{(0)}$ is defined as
$\{x_{-1}=z_{0,0}z_{2,0}-3z_{1,0}^2=0=-z_{0,0}z_{2,1}+7z_{1,0}z_{1,1}-z_{2,0}z_{0,1}\}\subset\Pr^6_{x_{-1},z_{*,*}}$.

Denote by $\m_{10}^{(1)}$ the component of
$\m_{10}$ that corresponds to $x_{-1}\neq0$. For it we get the
system
\begin{equation}
\label{secondcomp} \left\{ \begin{array}{ll}
2x_{2,0}x_{4,0}-3x_{3,0}^2+x_{3,0}x_{4,0}=0 \\
-2x_{2,0}x_{4,1}+7x_{3,0}x_{3,1}-x_{3,0}x_{4,1}-3x_{4,0}x_{2,1}
-3x_{4,0}x_{3,1}+x_{-1}(2x_{2,2}+x_{3,2})=0 \\
2x_{2,0}-x_{3,0}-x_{4,0}=0, \end{array} \right.
\end{equation}
As before, change the coordinates:
$$\begin{array}{cccc}
z_{0,0}=2x_{2,0}+x_{3,0}   & z_{1,0}=x_{3,0} & z_{2,0}=x_{4,0}& z_{0,1}=3(x_{2,1}+x_{3,1})\\
z_{1,1}=x_{3,1} & z_{2,1}=x_{4,1}& z_{0,2}=x_{3,2}&
z_{1,2}=2x_{2,2}+x_{3,2}
\end{array}$$
Then the equations become:
$\{z_{0,0}z_{2,0}-3z_{1,0}^2=0=-z_{0,0}z_{2,1}+7z_{1,0}z_{1,1}-z_{2,0}z_{0,1}+x_{-1}z_{1,2}=z_{0,0}-2z_{1,0}-z_{2,0}\}
\subset\Pr^7_{x_{-1},z_{*,*}}$. Eliminate $z_{0,0}$, using the
linear equation to get the equivalent presentation:
\beq\label{Eq.Presentation.of.M.1.10}
\m_{10}^{(1)}=\{(3z_{1,0}+z_{2,0})(z_{1,0}-z_{2,0})=0,\
(2z_{1,0}+z_{2,0})z_{2,1}=7z_{1,0}z_{1,1}-z_{2,0}z_{0,1}+x_{-1}z_{1,2}\}\sset\P^6
\eeq

\bthe
\label{The.Case.n=10}
1. $\m _{10}^{(0)}\subset \Pr ^6$ is a subvariety of
dimension 3, its singular locus is the plane
$\{x_{-1}=z_{0,0}=0=z_{1,0}=z_{2,0}\}\subset \Pr ^6$. The
singularities of $\m _{10}^{(0)}$ are resolved in one blowup and
the resolution, $\widetilde{\m _{10}^{(0)}}$, is a projective
bundle over $\Pr^1$. More explicitly: $\widetilde{\m
_{10}^{(0)}}=\Pr\Big(T^*_{\Pr^2}\oplus\cO_{\Pr^2}(-1)\Big)|_{C_2}$,
where $T^*_{\Pr^2}$ is the cotangent bundle of the plane and
$|_{C_2}$ denotes the restriction of the bundle onto a smooth
conic $C_2\subset\Pr^2$.

$\m _{10}^{(0)}$ admits an algebraic cell structure, $\m
_{10}^{(0)}=\C^3\cup(2\C^2)\cup\C^1\cup\C^0$. In particular, its
odd homologies vanish, while the even homologies are: $H_0(\m
_{10}^{(0)},\Z)=\Z=H_6(\m _{10}^{(0)},\Z)$, $H_2(\m
_{10}^{(0)},\Z)=\Z$, $H_4(\m _{10}^{(0)},\Z)=\Z^{\oplus2}$.

2. $\m _{10}^{(1)}$ is isomorphic to the union of two quadric
3-folds in $\Pr^5$, intersecting along
$\{z_{0,0}=0=z_{1,0}=z_{2,0}=x_{-1}z_{1,2}\}$. Each
quadric is the cone over a smooth quadric surface in $\P^3$. \ethe
\noindent\bpr {\bf 1.} We consider everything inside the
hyperplane $\{x_{-1}=0\}=\P^5\sset\P^6$. The singular locus of
$\m_{10}^{(0)}$ is defined by the maximal minors of Jacobian
matrix (the matrix of all the partials of the defining equations):
\beq\bpm
z_{2,0}&-6z_{1,0}&z_{0,0}&0&0&0\\
-z_{2,1}&7z_{1,1}&-z_{0,1}&-z_{2,0}&7z_{1,0}&-z_{0,0} \epm\eeq
Thus this singular locus is $Z:=\{z_{0,0}=0=z_{1,0}=z_{2,0}\}.$
(We consider the singular locus as a set, omitting the
multplicities.) Note that the singular locus is of codimension one
in $\m_{10}^{(0)}$, i.e. the variety is not normal. To compute the
normalization, i.e. the normal variety $\widetilde{\m_{10}^{(0)}}$
with the finite surjective birational morphism,
$\widetilde{\m_{10}^{(0)}}\to\m_{10}^{(0)}$, we blowup along the
singular locus: \beq
\widetilde{\m_{10}^{(0)}}:=Bl_Z\m_{10}^{(0)}=\Big\{(z_{0,0},z_{1,0},z_{2,0})\sim(\si_0,\si_1,\si_2),\quad
\si_0\si_2=3\si^2_1, \quad -\si_0z_{2,1}+7\si_1 z_{1,1}-\si_2
z_{0,1}=0\Big\}\sset\P^5_{z_{*,*}}\times\P^2_{\si_*} \eeq By
construction $\widetilde{\m_{10}^{(0)}}$ has two natural
projections:
$\widetilde{\m_{10}^{(0)}}\stackrel{\pi_\si}{\to}\P^2_{\si_*}$
and \
 $\widetilde{\m_{10}^{(0)}}\stackrel{\pi_z}{\to}\m_{10}^{(0)}$.

Consider the fibres of $\pi_z$. If
$(z_{0,0},z_{1,0},z_{2,0})\neq(0,0,0)$ then the condition
$(z_{0,0},z_{1,0},z_{2,0})\sim(\si_0,\si_1,\si_2)$ determines the
point $(\si_0,\si_1,\si_2)\in\P^2_{\si_*}$ uniquely. If
$(z_{0,0},z_{1,0},z_{2,0})=(0,0,0)$ then
$(z_{0,1},z_{1,1},z_{2,1})\neq(0,0,0)$ and therefore there are two
equations on $(\si_0,\si_1,\si_2)$: linear and quadratic.
Geometrically, for a fixed triple $(z_{0,1},z_{1,1},z_{2,1})$ we
have a line and a conic in the plane $\P^2_{\si_*}$, they
intersect in two points (counted with multiplicity). Therefore the
projection  $\pi_z$ is {\em finite} and is a 2:1 cover over the
singular locus.

Consider the projection $\pi_\si$. For a fixed point
$(\si_0,\si_1,\si_2)\in\P^2_{\si_*}$ the conditions on $z_{*,*}$
are {\em linear} and linearly independent. Therefore this
projection equips $\widetilde{\m_{10}^{(0)}}$ with the structure
of $\P^2$ bundle over its image, the conic
$\{\si_0\si_2=3\si^2_1\}\sset\P^2_{\si_*}$. In particular, it
follows that $\widetilde{\m_{10}^{(0)}}$ is smooth, hence the map
$\widetilde{\m_{10}^{(0)}}\stackrel{\pi_z}{\to}\m_{10}^{(0)}$ is
not only normalization but also a resolution
 of singularities. Finally note that $\pi_\si(\widetilde{\m_{10}^{(0)}})\sset\P^2$ is a smooth conic $C\sset\P^2_\si$, therefore $\widetilde{\m_{10}^{(0)}}$ is a $\P^2$ bundle over $\P^1$.

It remains to understand the $\P^2$ bundle structure. First,
consider the particular locus in $\widetilde{\m_{10}^{(0)}}$,
where $z_{0,0}=z_{1,0}=z_{2,0}=0$.  We claim that
$\widetilde{\m_{10}^{(0)}}|_{z_{0,0}=z_{1,0}=z_{2,0}=0}=\P
T^*_{\P^2}|_C$.

Indeed, after rescaling the coordinates
$(z_{2,1},z_{1,1},z_{0,1})$, we get the defining equation
$\Big\{(\si_0,\si_1,\si_2)\times(z_{2,1},z_{1,1},z_{0,1})=0\Big\}\sset\P^2_z\times\P^2_\si$.
Geometrically this can be interpreted as the variety of pairs: a
point in $\P^2_\si$ and the lines passing through this point.
(Recall that a line in $\P^2$ is defined by a 1-form.) The set of
line passing through a point, i.e. the set of non-zero 1-forms up
to scaling, is naturally the projectivization of the cotangent
space to $\P^2_\si$ at this point. Therefore
$\widetilde{\m_{10}^{(0)}}|_{z_{0,0}=z_{1,0}=z_{2,0}=0}=\P
T^*_{\P^2}|_C$.

As the other extremal case, consider the locus
$z_{2,1}=z_{1,1}=z_{0,1}=0$.  Then
$(z_{0,0},z_{1,0},z_{2,0})\neq(0,0,0)$ and the fibre over
$(\si_0,\si_1,\si_2)$ is one point: the projectivization of the
tautological bundle, $\cO_{\P^2}(-1)$.

Finally, for any fixed point $\si\in\P^2_\si$ the remaining
equations are linear.  Hence each fibre over $\si$ is the span of
$\P T^*_{\P^2,\si}$ and $\P\cO_{\P^2}(-1)_\si$, i.e. each fibre is
$\P(T^*_{\P^2,\si}\oplus\cO_{\P^2}(-1)_\si)$. As this
identification is canonical we get the statement.

{\bf 2.} Follows from the presentation in equation
(\ref{Eq.Presentation.of.M.1.10}). \epr

\beR \noindent {\bf Singular points of  $\m_{10}^{(0)}$:} as
mentioned above, the singular points of $\m_{10}^{(0)}$ are
\[[z_0: z_1:z_2:z_3:z_4:z_5]\] such that $z_0=z_1=z_2=0$ and $z_3,
z_4$ and $z_5$ are any (but not all zero at the same time). This
means that $x_{2,0} = x_{3,0}= x_{4,0} = 0$ and $x_{2,1},$
$x_{3,1},$ $x_{4,1}$ take any values (not all zeros). Therefore,
the deformation cocycle $\Psi$ of $\mathrm{m}_0(10)$ (see
definition of $\mathrm{m}_0(n)$ in Section 3) is of the form
\begin{equation} \Psi = x_{2,1} \Psi_{2,1} + x_{3,1} \Psi_{3,1} +
x_{4,1} \Psi_{4,1}
\end{equation}
where $\Psi_{2,1},$ $\Psi_{3,1}$ and $\Psi_{4,1}$ are closed
cocycles from $H^2_{+}(\mathrm{m}_0(10),\mathrm{m}_0(10))$ defined
in on page 183 of \cite{mill:09} by explicit formulas. Then, for
$i,j>1$ $\Psi(e_i, e_j)= \alpha_{ij}e_{i+j+1}$ for appropriate
scalars $\alpha_{ij}$, and the cocycle vanishes on the remaining
vector pairs. Let us re-name basis elements as follows
\[ f_1=e_1,\quad f_{i+1} = e_i,\quad i=2,\ldots,10.\]
This is an $\mathbb{N}$-graded basis for a filiform Lie algebra
$\mathrm{g}$ corresponding to $\Psi$. Indeed,
\[ [f_i,f_j] = [e_{i-1}, e_{j-1}] = \alpha_{i-1,j-1} e_{i+j-1} =
\alpha_{i-1,j-1} f_{i+j}\] where $i,j >1$ and
\[ [f_1, f_i]= [e_1, e_{i-1}] = e_i = f_{i+1}\] where $i>1$. As shown in
Section 3  $\mathrm{g}$ must be one of the following algebras:
$\mathrm{m}^3_{0,1}(11)$ (the central extension of
$\mathrm{m}^3_0(10)$), $\mathrm{m}^3_{0,3}(11)$ (the third central
extension of $\mathrm{m}^3_0(8)$), $\mathrm{m}^3_{0,5}(11)$ (the
fifth central extension of $\mathrm{m}^3_0(6)$).

\eeR

\subsection{n=11}
The equations of $\m _{11}$ are, \cite{mill:09}: \beq\ber
F_{2,3,0}: & 2x_{2,0}x_{4,0}-3x_{3,0}^2+x_{3,0}x_{4,0}=0,\\
F_{2,3,1}: & -2x_{2,0}x_{4,1}+7x_{3,0}x_{3,1}-x_{3,0}x_{4,1}-3x_{4,0}x_{2,1}-3x_{4,0}x_{3,1}=0,\\
F_{2,4,0}: & -2x_{2,0}x_{5,0}+4x_{3,0}x_{4,0}-6x_{4,0}^2+x_{3,0}x_{5,0}+x_{4,0}x_{5,0}=0,\\
F_{2,3,2}: &
-2x_{2,0}x_{4,2}+8x_{3,0}x_{3,2}-x_{3,0}x_{4,2}-4x_{4,0}x_{2,2}-6
x_{4,0}x_{3,2}+
 2x_{5,0}x_{2,2}+x_{5,0}x_{3,2}-3x_{2,1}x_{4,1}+4x_{3,1}^2-3x_{3,1}x_{4,1}=0.
\eer\eeq As it was pointed out earlier, our expressions for
$F_{2,3,0}$, and $F_{2,3,1}$ are consistent with \cite{mill:09}.
Our equation $F_{2,4,0}=0$ is the same as the corresponding
equation in the Example 4.6 \cite{mill:09}. Our equation
$F_{2,3,2}=0$ differs from that in Example 4.6 \cite{mill:09} by
the coefficient at the $x_{2,2}x_{4,0}$ term, but we checked our
calculation several times and we are confident that our
coefficient is correct.

Apply the following change of variables: \beq\begin{array}{ccccc}
z_{0,0}=2x_{2,0}+x_{3,0}   & z_{1,0}=x_{3,0} & z_{2,0}=x_{4,0}& z_{3,0}= x_{5,0}-6x_{4,0}\\
z_{0,1}=3(x_{2,1}+x_{3,1}) & z_{1,1}=x_{3,1} & z_{2,1}=x_{4,1}\\
z_{0,2}= 2x_{2,2}+x_{3,2} & z_{1,2}= x_{3,2} & z_{2,2}= x_{4,2}.
\end{array}
\eeq Then we get the following equations: \beq\ber F_1: &
z_{0,0}z_{2,0}-3z_{1,0}^2=0
\\F_2: & -z_{0,0}z_{2,1}+7z_{1,0}z_{1,1}-z_{2,0}z_{0,1}=0
\\F_3: & z_{3,0}(2z_{1,0}-z_{0,0}+z_{2,0})+z_{2,0}(16z_{1,0}-6z_{0,0})=0  
\\F_4: &  -z_{0,0}z_{2,2}-z_{0,1}z_{2,1}+4z^2_{1,1}+8z_{1,0}z_{1,2}+4z_{2,0}(z_{0,2}-z_{1,2})+z_{3,0}z_{0,2}=0  
\eer\eeq

\bthe\label{The.Case.n=11} {\bf 1.} The parameter space has two
irreducible components, $X$ and $Y$, both of dimension five, as a set $\m
_{11}=X\cup Y$.
\\{\bf 2.} The component  $X=\{z_{0,0}=0=z_{1,0}=z_{2,0}=4z^2_{1,1}-z_{0,1}z_{2,1}+z_{3,0}z_{0,2}\}\sset\P^9$ enters with generic multiplicity 2.
The singular locus of (reduced) $X$ is
$Sing(X)=\{z_{0,0}=0=z_{1,0}=z_{2,0}=z_{1,1}=z_{0,1}=z_{2,1}=z_{3,0}=z_{0,2}\}=\P^1_{z_{1,2}z_{2,2}}$.

$X$ admits the algebraic cell structure: $\C^5\cup
\C^4\cup\C^3\cup\C^2\cup\C^1\cup\C^0$. In particular, its odd
cohomologies vanish, while all the even cohomologies are $\Z$.
\\{\bf 3.} The component $Y$ is reduced, it is the topological closure of the affine part of $\m _{11}$ in $\C^9=\{z_{0,0}\neq0\}\subset\Pr^9$.
(Thus, the defining equations of $Y\cap \C^9$ are obtained from
the equations above by setting $z_{0,0}=1$.) The affine part
$Y\cap\{z_{0,0}\neq0\}$ is smooth. The intersection with the
infinite hyperplane, $Y\cap\{z_{0,0}=0\}$, is defined, as a set,
by
$\{z_{0,0}=0=z_{1,0}=z_{3,0}=z_{0,1}=z^2_{1,1}+z_{0,2}(z_{0,2}-z_{1,2})\}$.

The affine part, $Y\cap\{z_{0,0}\neq0\}\sset\C^9$, is isomorphic
to $\C^4$-bundle over the line with two punctures,
$\C^1_{z_{1,0}}\smin\{(z_{1,0}+1)(3z_{1,0}-1)=0\}$.

$H_{2i}(Y,\Z)=\Z$, for $0\le 2i\le 10$ and $H_9(Y,\Z)=\Z^{\oplus
2}$, all the other cohomologies vanish. \ethe \noindent\bpr {\bf
1.} Consider the part of $\m_{11}$ at infinity,
$\m_{11}\cap\{z_{0,0}=0\}$. We get (omitting multiplicities)
$z_{0,0}=0=z_{1,0}=z_{2,0}z_{0,1}$. Hence this part splits, by
direct check we get two components: \beq
\m_{11}\cap\{z_{0,0}=0\}=\Big\{z_{0,0}=0=z_{1,0}=z_{2,0}=4z^2_{1,1}-z_{0,1}z_{2,1}+z_{3,0}z_{0,2}\Big\}\cup
\Big\{z_{0,0}=0=z_{1,0}=z_{3,0}=z_{0,1}=z^2_{1,1}+z_{2,0}(z_{0,2}-z_{1,2})\Big\}
\eeq Note that neither of them lies inside the other, e.g. they
are distinguished by $z_{2,0}=0$ and $z_{0,1}=0$. By direct check,
the first component is of codimension 4, the second is of
codimension 5. Note that the whole space, $\m_{11}$ is
 defined by four equations, therefore at each point of $\m_{11}$ the (local) codimension is at most 4.
 Therefore the first component is an honest component of $\m_{11}$ (we call it $X$),
while the second component belongs to the intersection of $Y$ with
the infinite hyperplane $z_{0,0}=0$.

{\bf 2.} To check the generic multiplicity of $X$, fix some
generic values of
$z_{0,2},z_{1,2},z_{2,2},z_{3,0},z_{0,1},z_{1,1}$ and let
$z_{0,0},z_{1,0},z_{2,0}$ vary near zero, while $z_{2,1}$ varies
near a root of $4z^2_{1,1}-z_{0,1}z_{2,1}+z_{3,0}z_{0,2}=0$. This
corresponds to the transverse intersection of the generic point of
$X$ by a linear space of the complementary dimension. The generic
multiplicity of $X$ is the multiplicity of this local
intersection, i.e. (algebraically) the length of the Artinian ring
$\C\{z_{0,0},z_{1,0},z_{2,0},z_{2,1}\}/<F_1,F_2,F_3,F_4>$. Note
that for the generic values of
$z_{0,2},z_{1,2},z_{2,2},z_{3,0},z_{1,1}$, the variable $z_{2,1}$
enters linearly in equation $F_4$, hence can be eliminated.
Similarly, the variable $z_{0,0}$ can be eliminated using $F_3$,
while $z_{2,0}$ can be eliminated using $F_2$. Thus the ring
$\C\{z_{0,0},z_{1,0},z_{2,0},z_{2,1}\}/<F_1,F_2,F_3,F_4>$ is
isomorphic to $\C\{z_{1,0}\}/Q$, where the expansion of $Q$ in
$z_{1,0}$ begins from a quadratic term. Hence the intersection
multiplicity (=the length of this ring) is two.

To understand the cell structure, consider the affine part,
$X\cap\{z_{3,0}\neq0\}$, and the part at infinity,
$X\cap\{z_{3,0}=0\}$. By direct check:
$X\cap\{z_{3,0}\neq0\}\approx
\C^5_{z_{0,1}z_{1,1}z_{2,1}z_{1,2}z_{2,2}}$. Continue "to cut" the
infinite part: \beq
X\cap\{z_{3,0}=0\}=\underbrace{X\cap\{z_{3,0}=0=z_{0,1}\}}_{\P^3_{z_{1,1}z_{2,1}z_{1,2}z_{2,2}}}\coprod
\underbrace{X\cap\{z_{3,0}=0,\
z_{0,1}\neq0\}}_{\C^4_{z_{0,1}z_{1,1}z_{1,2}z_{2,2}}} \eeq Finally
we use the standard cell decomposition
$\P^3=\C^3\coprod\C^2\coprod\C^1\coprod\C^0$. This gives the
algebraic cell decomposition of $X$.

{\bf 3.} The defining equations of the affine part of
$Y|_{z_{0,0}\neq0}$ are: \beq\ber
z_{2,0}=3z_{1,0}^2,\quad  z_{2,1}=7z_{1,0}z_{1,1}-z_{2,0}z_{0,1},\quad z_{3,0}(2z_{1,0}-1+z_{2,0})+3z^2_{1,0}(16z_{1,0}-6)=0,\\
z_{2,2}=-z_{0,1}z_{2,1}+4z^2_{1,1}+8z_{1,0}z_{1,2}+4z_{2,0}(z_{0,2}-z_{1,2})+z_{3,0}z_{0,2}
\eer\eeq Therefore $Y$ projects isomorphically onto
$\{z_{3,0}(2z_{1,0}-1+3z_{1,0}^2)+3z^2_{1,0}(16z_{1,0}-6)=0\}\sset\C^6_{z_{1,0}z_{3,0}z_{0,1}z_{1,1}z_{0,2}z_{1,2}}$.
Note that the defining equation does not contain variables
$z_{0,1}z_{1,1}z_{0,2}z_{1,2}$, so this is a $\C^4$ bundle over
the curve
$\{z_{3,0}(2z_{1,0}-1+3z_{1,0}^2)+3z^2_{1,0}(16z_{1,0}-6)=0\}\sset\C^2_{z_{1,0}z_{3,0}}$.
Finally, this curve projects isomorphically onto $\C^1_{z_{1,0}}$
outside the locus $(3z_{1,0}-1)(z_{1,0}+1)=0$, i.e. with two
points punctured.

Finally, we use \S\ref{Sec.Exact.Sequence.of.Pair} for the pair:
$Y|_{z_{0,0}=0}\sset Y$: $\cdots\to H_i(Y|_{z_{0,0}=0},\Z)\to
H_i(Y,\Z)\to H_i(Y,Y|_{z_{0,0}=0},\Z)\to\cdots$.
 Note that $Y\smin Y|_{z_{0,0}=0}$ is smooth of real dimension 10, thus by proposition \ref{Thm.Cohom.Non.Compact.Spaces}:
 $H_i(Y,Y|_{z_{0,0}=0},\Z)=H^{10-i}(Y\smin Y|_{z_{0,0}=0},\Z)$. As established above, the homotopy type of
 $Y\smin Y|_{z_{0,0}=0}$ is that of $\C^1$ with two punctured points. Therefore
$H_i(Y|_{z_{0,0}=0},\Z)\isom H_i(Y,\Z)$ for $i<8$ and
\[
0\to H_9(Y,\Z)\to H_1(\C^1\smin\{2pts\},\Z)\to
H_8(Y|_{z_{0,0}=0},\Z)\to H_8(Y,\Z)\to0
\]
As shown above, $Y|_{z_{0,0}=0}$ is a singular quadric in $\P^5$,
and its cell structure is: $\C^4\cup\C^3\cup\C^2\cup\C^1\cup\C^0$.
In particular, all its even cohomologies are $\Z$, while all the
odd cohomologies vanish. Besides note that $H_8(Y,\Z)$ contains
(at least) one factor of $\Z$, being a projective hypersurface.
Therefore, from $H_8(Y|_{z_{0,0}=0},\Z)\to H_8(Y,\Z)\to0$, we get:
$H_8(Y,\Z)=\Z$ and the last map is an isomorphism. Thus: $0\to
H_9(Y,\Z)\to H_1(\C^1\smin\{2pts\},\Z)\to0$, proving the
statement. \epr

\beR \noindent {\bf About the first component of $\m_{11}$:} the
component $X$ of $\m_{11}$  contains the $\mathbb{Z}$-graded
algebra $\mathrm{g}$ of type $\mathrm{m}_{0,1}(11)$ defined in
\cite{mill:04} (p. 268) which would correspond to
$z_3=z_4=z_5=z_7=z_8=z_9=0$ but $z_6\ne 0.$ Besides, $X$ also
contains   algebras corresponding to $z_7=z_8=z_9=z_6=0.$ The
corresponding deformation cocycle is \[\Psi(e_i,e_j) = \alpha_{ij}
e_{i+j+1}\] where $i,j>1$, $\alpha_{ij}$ are scalars. Besides,
$\Psi$ vanishes on the other pairs of vectors from the standard
basis $\{e_1, e_2, \ldots, e_{11}\}$ of $\mathrm{m}_0(11)$.
Re-naming basis elements: $f_1=e_1$ and $f_{i+1} = e_{i}$ where
$i=2,...,11$ we obtain an $\mathbb{N}$-graded basis for
$\mathrm{g}$ corresponding to $\Psi$. As would follow from results
of Section 3, $\mathrm{g}$ should be of one of the following
types: $\mathrm{m}^3_{0,2}(12)$ (the second central extension of
$\mathrm{m}^3_0(10)$), $\mathrm{m}^3_{0,3}(12)$ (the third central
extension of $\mathrm{m}^3_0(9)$), $\mathrm{m}^3_{0,5}(12)$ (the
fifth central extension of $\mathrm{m}^3_0(7)$),
$\mathrm{m}^3_{0,6}(12)$ (the sixth central extension of
$\mathrm{m}^3_0(6)$).

There are also algebras in $X$ corresponding to
$z_3=z_4=z_5=z_6=0.$ In this case,  the deformation cocycle is
$\Psi(e_i, e_j) = \alpha_{ij} e_{i+j+2}$ for $i,j>1$, and it
equals to zero on the remaining vector-pairs. Changing  notation
of the basis elements of $\mathrm{m}_0(11)$: $f_1=e_1, f_{i+2} =
e_{i},$ $i=2,..., 11$, we obtain  an $\mathbb{N}$-graded basis for
$\mathrm{g}$ corresponding to $\Psi$. Thus, $\mathrm{g}$ is
obtained by taking one-dimensional central extensions of
$\mathrm{m}^4_{0}(8)$. Therefore, $X$ contains algebras of the
following types: $\mathrm{m}^4_{0,1}(13)$ (one-dimensional central
extension of $\mathrm{m}^4_0(12)$), $\mathrm{m}^4_{0,3}(13)$ (the
third central extension of $\mathrm{m}^4_{0}(10)$,
$\mathrm{m}^4_{0,5}(13)$ (the fifth central extension of
$\mathrm{m}^4_0(8)$). \eeR

\section{Structure of $\mathbb{N}$-graded  filiform Lie algebras}
\label{algebra}

The classification of nilpotent Lie algebras is a difficult
problem  widely discussed in literature. Nilpotent Lie algebras up
to dimension 5 are well-known. In  \cite{graaf:12}  the authors
gave a full classification of  6-dimensional nilpotent Lie
algebras over arbitrary fields.  In higher dimensions, there are
infinite families of pairwise nonisomorphic nilpotent Lie
algebras. In dimension 7 each infinite family can be parameterized
by a single parameter. Many papers on classification of
7-dimensional nilpotent Lie algebras have been published but  the
most complete list of such Lie algebras was obtained by Ming-Peng
Gong (see \cite{gong:98}).

In this section we are concerned  with  those nilpotent Lie
algebras whose nil-index is $n-1$ for a given dimension $n$ over
\emph{an algebraically closed field $F$ of zero characteristic}.
Such Lie algebras will be called \emph{filiform}. An
infinite-dimensional analog of a filiform Lie algebra is a
so-called Lie algebra of \emph{maximal class} (or of coclass 1).
Namely, a residually nilpotent Lie algebra is called a Lie algebra
of \emph{maximal class} (or coclass 1) if $\sum_{i\ge 1} ( \dim
\mathrm{g}^i/\mathrm{g}^{i+1} -1) =1$ where $\{ \mathrm{g}^i\}$ is
the lower central series of $\mathrm{g}$.  In \cite{ver:70} M.
Vergne has shown that an arbitrary filiform Lie algebra is
isomorphic to some deformation of the graded filiform Lie algebra
$\mathrm{m}_0(n)$ defined by its basis $e_1,\ldots, e_n$ and
nontrivial Lie products: $[e_1, e_i] = e_{i+1},$ $i=2,\ldots,n-1$.

\begin{Example} Let $\mathrm{m}_0$ be a linear space with a basis
$\{e_1, e_2, \ldots \}$. Define the Lie product on $\mathrm{m}_0$
by $[e_1, e_i] = e_{i+1}$ for $i>1$ and the other products are
zero. Note that we can introduce two types of
$\mathbb{N}$-gradings:

\noindent\emph{Type 1:} $\mathrm{m}_0 = \oplus_{i\ge 1} L_i$
 where $L_1 = \text{span}\{e_1, e_2\} $ and $L_i =
\text{span} \{ e_{i+1}\}$.

\noindent\emph{Type 2:} $\mathrm{m}_0 = \oplus_{i\ge 1} {\tilde
L}_i$
 where ${\tilde L}_1 = \text{span}\{e_1\} $ and ${\tilde L}_i =
\text{span} \{ e_{i}\}$.
\end{Example}

In the case of infinite-dimensional $\mathbb{N}$-graded Lie
algebras of maximal class, Vergne proved the following:

\begin{Theorem}\label{Vergne}  Let $L = \oplus_{i\in \mathbb{N}} L_i$ be an
infinite-dimensional $\mathbb{N}$-graded Lie algebra of maximal
class and suppose $L=\langle L_1  \rangle.$ Then  $L\cong
\mathrm{m}_0$ (with Type 1 grading).
\end{Theorem}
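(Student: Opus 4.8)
The plan is to analyze the structure of such a Lie algebra $L$ directly from the maximal class and gradedness hypotheses. First I would record what $L=\langle L_1\rangle$ together with maximal class forces on the dimensions $\dim L_i$. Since $\sum_{i\ge 1}(\dim \mathrm{g}^i/\mathrm{g}^{i+1}-1)=1$ and the lower central series of a graded Lie algebra respects the grading, one should check that $\mathrm{g}^i=\oplus_{k\ge i}L_k$ when $L$ is generated in degree $1$; hence $\dim\mathrm{g}^i/\mathrm{g}^{i+1}=\dim L_i$. The residual nilpotence and the maximal-class condition then translate into: $\sum_{i\ge 1}(\dim L_i-1)=1$. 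Because $L$ is generated by $L_1$ and is infinite-dimensional, $\dim L_i\ge 1$ for all $i$ (if some $L_i=0$ then all higher graded pieces vanish, contradicting infinite dimensionality), so exactly one index $i_0$ has $\dim L_{i_0}=2$ and all others have $\dim L_i=1$. Next I would argue that $i_0=1$: if $\dim L_1=1$, then $L$ is generated by a single element and a one-generated Lie algebra is abelian, so $L_i=0$ for $i\ge 2$, again contradicting infinite-dimensionality. Therefore $\dim L_1=2$ and $\dim L_i=1$ for all $i\ge 2$; this is exactly the dimension pattern of $\mathrm{m}_0$ with its Type 1 grading.

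The second half is to promote this dimension count to an actual isomorphism. Fix a basis $e_1,e_2$ of $L_1$ and, for each $i\ge 2$, a nonzero vector $f_i$ spanning $L_i$. Because $L=\langle L_1\rangle$ and each $L_i$ for $i\ge 2$ is one-dimensional and nonzero, the bracket maps $[e_1,-]:L_i\to L_{i+1}$ and $[e_2,-]:L_i\to L_{i+1}$ cannot both be zero for any $i\ge 2$ (else the subalgebra generated by $L_1$ would stop growing past degree $i$, contradicting $\dim L_{i+1}=1$); moreover $[L_1,L_1]\subseteq L_2$ is onto since $L$ is generated in degree $1$. I would then perform a change of basis: after replacing $e_1$ by a suitable linear combination $ae_1+be_2$, arrange that $[\,\text{this vector},\,L_i]=L_{i+1}$ for all large $i$, and then push this down and rescale the $f_i$ inductively so that, writing the chosen generator as $e_1$, one has $[e_1,f_i]=f_{i+1}$ for all $i\ge 2$, and choosing $e_2$ appropriately in $L_1$ with $[e_1,e_2]=f_3$ (reindex $f_{i}=e_{i}$ for $i\ge 3$, $e_2$ spanning the complement). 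The remaining brackets $[e_2,e_i]$ lie in $L_{i+1}$ hence are scalar multiples of $e_{i+1}$; the Jacobi identity applied to $(e_1,e_2,e_i)$ yields a recursion on these scalars, and I expect it to force them all to vanish, after possibly one further normalization of $e_2$. That yields precisely the relations $[e_1,e_i]=e_{i+1}$, all other brackets zero, i.e. $L\cong\mathrm{m}_0$ with Type 1 grading.

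The main obstacle I anticipate is the last step: controlling the brackets $[e_2,e_i]$ and the brackets internal to the top of the tower, and showing the normalizations can be made consistently for infinitely many $i$ at once (a compatible choice of the $f_i$, not just term-by-term). Concretely, one must verify that the Jacobi recursion for the structure constants $[e_2,e_i]=\gamma_i e_{i+1}$ can be killed by a single rescaling $e_2\mapsto e_2+\sum c_k e_k$ or, more cleanly, by first fixing the generator $e_1\in L_1$ canonically (e.g. as the unique, up to scale, element of $L_1$ whose adjoint action is eventually an isomorphism onto each next piece — one should check such an element exists and is essentially unique) and then defining $e_i$ recursively by $e_{i+1}:=[e_1,e_i]$, so that the tower is rigidly determined and $e_2$ is forced. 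I would also double-check the degenerate possibility that $[e_1,-]$ has a kernel in low degrees; since $L$ is residually nilpotent and graded with these dimensions, any such kernel would produce a nonzero graded ideal on which the quotient still has maximal class, and a short argument rules this out. Granting these normalizations, the identification with $\mathrm{m}_0$ is immediate and the theorem follows.
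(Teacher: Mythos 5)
The paper does not actually prove this statement---it is quoted from Vergne's paper [V] without proof---so I can only judge your argument on its own terms. Your first half is correct and complete: for a graded algebra generated in degree one the $i$-th term of the lower central series equals $\oplus_{k\ge i}L_k$, so maximal class reads $\sum_i(\dim L_i-1)=1$; since $L_{k+1}=[L_1,L_k]$, no graded piece may vanish without truncating the algebra, and $\dim L_1=1$ would force $L$ abelian; hence $\dim L_1=2$ and $\dim L_i=1$ for $i\ge 2$.

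The genuine gap is exactly the step you flag as ``the main obstacle,'' and it is more serious than your sketch suggests. Writing $f_{i+1}:=[e_1,f_i]$, $[e_2,f_i]=c_if_{i+1}$ and $[f_j,f_i]=d_{j,i}f_{i+j}$, the Jacobi identity on $(e_1,e_2,f_i)$ does \emph{not} force the $c_i$ to vanish: it only yields $d_{2,i}=c_i-c_{i+1}$, which has many nonzero solutions compatible with all the remaining Jacobi identities in any finite truncation. Concretely, the six-dimensional algebra with $[e_1,e_2]=f_2$, $[e_1,f_i]=f_{i+1}$ ($i=2,3,4$), $[e_2,f_4]=f_5$, $[f_2,f_3]=-f_5$ and all other products zero is an $\mathbb{N}$-graded filiform Lie algebra generated by its two-dimensional degree-one piece, yet $[L_2,L_3]\ne 0$, so it is not graded-isomorphic to $\mathrm{m}_0(6)$ (these are the algebras $Q_{2k}$ in Vergne's finite-dimensional classification). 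Therefore the vanishing of the extra structure constants cannot follow from the local relations you invoke; it is forced only by imposing $d_{2,i}=c_i-c_{i+1}$, $d_{j,i}=d_{j+1,i}+d_{j,i+1}$, $d_{j,j}=0$ and the Jacobi identities on $(e_2,f_j,f_i)$ \emph{simultaneously for all} $i,j$ in an infinite tower---and your argument never uses infinite-dimensionality after the dimension count, so it cannot be completed as written. Three smaller defects: (i) choosing $e_1$ outside all the kernel lines $K_i=\{x\in L_1:[x,L_i]=0\}$ by genericity requires the field to be uncountable (a plane over a countable field \emph{is} a countable union of lines); the honest route is to prove all $K_i$ coincide, which is again part of the real content; (ii) the element of $L_1$ that is canonical up to scale is $e_2$ (the common kernel), not $e_1$---every $e_1+\lambda e_2$ acts as an isomorphism $L_i\to L_{i+1}$ for $i\ge 2$ in $\mathrm{m}_0$; (iii) the substitution $e_2\mapsto e_2+\sum_k c_ke_k$ is not homogeneous, hence unavailable if the isomorphism is to respect the Type 1 grading as the statement requires.
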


\noindent By taking quotients of $\mathrm{m}_0$ we obtain
finite-dimensional  filiform Lie algebras $\mathrm{m}_0(n) =
\mathrm{m}_0/I_n$ where $I_n = \text{span}\{e_{n+1}, e_{n+2},
\ldots\}$.   We next introduce other important examples of
infinite-dimensional  Lie algebras of maximal class.

\begin{Example}
The Lie algebra $\mathrm{m}_2$ is defined by its basis $\{e_1,
e_2, \ldots \}$ with multiplication table as follows
\[ [e_1, e_i] = e_{i+1}, \, i\ge 2, \, [e_2, e_i]= e_{i+2},\, i\ge
3,\] and the remaining products are all zero.
\end{Example}

\begin{Example}
The  Lie algebra $W$ (the Witt algebra) is defined by its basis
$\{e_1, e_2, \ldots \}$ with multiplication table as follows
\[ [e_i, e_j] = (j-i) e_{i+j}, \,\, i,j \ge 1.\]
\end{Example}

In \cite{fialowski:83}  the classification of $\mathbb{N}$-graded
Lie algebras of maximal class $L = \oplus_{i\in \mathbb{N}} L_i$
generated by $L_1, L_2$ was obtained. Namely, the following
theorem holds.
\par\medskip
\begin{Theorem}\label{Zelmanov}  Let $L = \oplus_{i\in \mathbb{N}} L_i$ be an
infinite-dimensional $\mathbb{N}$-graded Lie algebra of maximal
class and suppose $L=\langle L_1, L_2 \rangle.$ Then one of the
following holds:

(1) $L\cong \mathrm{m}_0$;

(2) $L\cong \mathrm{m}_2$;

(3) $L\cong W$.
\end{Theorem}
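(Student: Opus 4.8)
Proof proposal for Theorem \ref{Zelmanov} (Fialowski's classification of $\mathbb{N}$-graded Lie algebras of maximal class generated by $L_1,L_2$).

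The plan is to work entirely with structure constants relative to a homogeneous basis and to exploit the maximal-class condition, which forces $\dim L_i = 1$ for all $i\ge 2$ (and $\dim L_1 \le 2$; since $L=\langle L_1,L_2\rangle$ and the algebra is genuinely two-generated we have $\dim L_1 = 1$ forcing $L_2=[L_1,L_1]$, but when $L_2\not\subset[L_1,L_1]$ we have $\dim L_1 = 2$). Pick $e_1\in L_1$ and, for each $i\ge 2$, a generator $e_i$ of $L_i$. By rescaling the $e_i$ inductively we may normalize $[e_1,e_i]=e_{i+1}$ for all $i\ge 2$ (this is possible precisely because maximal class guarantees $[L_1,L_i]=L_{i+1}$ for $i\ge 2$, otherwise the lower central series would drop too fast). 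So the only freedom left is in the brackets $[e_2,e_i]$ for $i\ge 3$: write $[e_2,e_i]=c_i\,e_{i+2}$ for scalars $c_i\in F$, $i\ge 3$. Every other bracket $[e_j,e_i]$ with $3\le j\le i$ is then determined by Jacobi from these, since $e_j$ lies in the subalgebra generated by $e_1,e_2$.

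The heart of the argument is to show that the whole sequence $(c_3,c_4,c_5,\dots)$ is controlled by the single scalar $c_3$, via the Jacobi identity. First I would compute $[e_3,e_i]$ using $e_3=[e_1,e_2]$ and the Jacobi identity: $[[e_1,e_2],e_i] = [e_1,[e_2,e_i]] - [e_2,[e_1,e_i]] = c_i\,e_{i+3} - c_{i+1}\,e_{i+3}$, so $[e_3,e_i]=(c_i-c_{i+1})e_{i+3}$. Next, Jacobi on the triple $(e_2,e_3,e_i)$ gives a \emph{recursion} among the $c$'s; iterating, one finds that $c_i$ for $i\ge 4$ is a fixed polynomial expression in $c_3$ (in fact, after the normalization, the constraints collapse to essentially one degree of freedom in positive weight — the same phenomenon exploited in Section \ref{topology}, where all but one weight-graded coordinate can be scaled away). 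I expect three cases to emerge: (i) $\dim L_1 = 1$, i.e. $e_2=[e_1,e_1]$ is impossible in characteristic zero (bracket is alternating), so actually $\dim L_1 = 2$ always and one distinguishes by whether the "second generator" survives; the genuine trichotomy is (a) $[e_2,e_i]=0$ for all $i$, giving $L\cong \mathrm{m}_0$; (b) the recursion has a solution with $[e_2,e_i]=e_{i+2}$ stably, giving $L\cong \mathrm{m}_2$; (c) the remaining solution, which after rescaling the $e_i$ by suitable scalars becomes $[e_i,e_j]=(j-i)e_{i+j}$, i.e. $L\cong W$.

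Concretely the steps are: (1) establish $\dim L_i = 1$ for $i\ge 2$ from maximal class; (2) normalize $[e_1,e_i]=e_{i+1}$; (3) derive $[e_3,e_i]=(c_i-c_{i+1})e_{i+3}$ and, more generally, closed formulas for $[e_j,e_i]$; (4) impose Jacobi on $(e_2,e_3,e_i)$ and on $(e_2,e_4,e_i)$ to get the recursion on $(c_i)$; (5) solve the recursion — this is where the three families appear — and (6) for each solution family, rescale the basis to match one of $\mathrm{m}_0$, $\mathrm{m}_2$, $W$ and check no two of these are isomorphic (compare, e.g., $\dim[L,L]\cap L_3$ or the dimension of the centralizer of $L_1$). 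The main obstacle is step (5): showing the recursion really has \emph{only} these three solution types rather than a continuum. The key simplification is that after the weighted rescaling $e_i\mapsto \lambda^{i}e_i$ (the same $\C^*$-action used in Section \ref{topology}), the parameter $c_3$ can be normalized to lie in a finite set $\{0,1,\dots\}$ up to this scaling, and the recursion then pins down everything else; one must be careful that rescaling $e_2$ itself is \emph{not} allowed after step (2) without re-breaking the normalization, so the only remaining freedom is the joint rescaling $e_i\mapsto \mu^{i-1}e_i$ for $i\ge 2$ which fixes $c_3$ — hence the genuinely three discrete cases.
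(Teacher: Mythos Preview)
The paper does not prove Theorem~\ref{Zelmanov}; it is quoted as a known result with references to Fialowski \cite{fialowski:83}, Shalev--Zelmanov \cite{shalev:97}, and Millionshchikov \cite{mill:04}. So there is no ``paper's own proof'' to compare against. Your outline is in fact the standard approach taken in those references: normalize a homogeneous basis so that $[e_1,e_i]=e_{i+1}$, encode the remaining structure in the sequence $c_i$ defined by $[e_2,e_i]=c_i e_{i+2}$, and use Jacobi identities to constrain $(c_i)$.

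Two points deserve correction. First, your parenthetical about $\dim L_1$ is muddled: if $\dim L_1=1$ then $[L_1,L_1]=0$ by antisymmetry, so $L_2$ can never equal $[L_1,L_1]$. The clean argument is that maximal class gives $\dim L/[L,L]=2$, and since $L_1\oplus L_2$ injects into $L/[L,L]$ (neither lies in $[L,L]$ under the hypothesis), both $L_1$ and $L_2$ are one-dimensional. Second, and more seriously, your step~(5) is where the actual content lives, and your sketch there does not work as stated. The rescaling $e_1\mapsto\lambda e_1$, $e_2\mapsto\mu e_2$ (which forces $e_i\mapsto \lambda^{i-2}\mu\, e_i$ for $i\ge 2$) multiplies every $c_i$ by the \emph{same} scalar $\mu/\lambda^2$; it does not reduce $c_3$ to a finite set, it only lets you normalize one nonzero $c_i$ to $1$. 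After that normalization the recursion coming from Jacobi on triples like $(e_2,e_3,e_i)$ is still an infinite system, and one must actually solve it to see that the only possibilities are $c_i\equiv 0$, $c_i\equiv \text{const}$, and $c_i = \text{const}\cdot(i-2)$, corresponding to $\mathrm m_0$, $\mathrm m_2$, $W$ respectively. That analysis (carried out in \cite{fialowski:83} and again in \cite{shalev:97}) is the genuine work, and your proposal does not yet contain it.
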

\par\medskip
(Note that this result was also obtained 14 years later in [SZ],
and it also follows from Theorem 5.17 in \cite{mill:04} 2004.)

\par\medskip

Let us now consider $\mathbb{N}$-graded Lie algebras of maximal
class that are generated by  graded components of degrees 1 and
$q$ where $q>2$. Hence,
\[ \mathrm{g} = \bigoplus^{\infty}_{i=1,q} \mathrm{g}_i. \]
 Below are given some examples of such algebras.

\begin{Example} The Lie algebra $\mathrm{m}^q_0$ is defined by its basis
$e_1, e_q,\ldots $ with multiplication table as follows
\[ [e_1, e_i] =e_{i+1}, \,\, i\ge q \]
and the remaining products are zero. The basis as above will be
called  the {\bf  standard} basis for $\mathrm{m}^q_0$.
\end{Example}

By taking quotients of $\mathrm{m}^q_0$ we obtain
finite-dimensional filiform Lie algebras $\mathrm{m}^q_0(n) =
\mathrm{m}^q_0/I_n$ where $I_n = \text{span}\{e_{n+1}, e_{n+2},
\ldots\}$ also generated by components of degrees 1 and  $q$.

\begin{Example} The Lie algebra $\mathrm{m}_q$ has the basis $e_1,
e_q,\ldots $ and the following multiplication table:
\begin{align*}
 &[e_1, e_i] = e_{i+1}, \,\, i\ge q, \\
 &[e_q, e_i] = e_{q+i}, \,\, i\ge q+1,
\end{align*}  and the other products are zero.
\end{Example}

\begin{Example} The Lie algebra $W^q$ is given by its basis $e_1,
e_q,\ldots $ with the following multiplication table:
\[ [e_i, e_j] = (j-i) e_{i+j}\]
and the remaining products are all zero.
\end{Example}

Notice that $W^q$ is a \emph{nonsolvable} Lie algebra of maximal
class. It is not known yet whether there are nonsolvable Lie
algebras of maximal class other than algebras described in the
preceding example. The isomorphism classes of solvable Lie
algebras of maximal class were given in \cite{B:74} (see also
\cite{CL:10}).

\par\medskip
\noindent Here is the main conjecture:

\par\medskip
\noindent{\bf Conjecture.} Let $\mathrm{g}$ be an
$\mathbb{N}$-graded  Lie algebra of maximal class generated by
graded components of degrees 1 and $q$. Then $\mathrm{g}$ is
isomorphic (as a graded algebra) to one of the following three
algebras: $\mathrm{m}^q_0$, $\mathrm{m}_q$, $W^q$.
\par\medskip
Later we will see that this conjecture is actually equivalent to
the conjecture from \cite{mill:09} p. 190.

For  $q>2$, we show the following:

\begin{Theorem}\label{MainTheorem}
Let $\mathrm{g}$  be an $\mathbb{N}$-graded Lie algebra of maximal
class generated by nonzero graded components $\mathrm{g}_1$ and
$\mathrm{g}_q$ where $q>2$, and let  $\mathrm{g}_{q+2}\ne \{0\}$.
If
\[ [\mathrm{g}_q, \mathrm{g}_{q+1}] =\ldots=[\mathrm{g}_{2q},
\mathrm{g}_{2q+1}]=0, \]
 then $\mathrm{g}\cong
\mathrm{m}^q_0$.
\end{Theorem}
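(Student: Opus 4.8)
The plan is to work with the standard basis picture for graded Lie algebras of maximal class and to show that the vanishing hypotheses on the brackets $[\mathrm{g}_q,\mathrm{g}_{q+1}],\dots,[\mathrm{g}_{2q},\mathrm{g}_{2q+1}]$ force \emph{all} brackets except $[e_1,-]$ to vanish. Since $\mathrm{g}$ is of maximal class, each $\mathrm{g}_i$ for $i\ge q+1$ is one-dimensional (the grading has a gap from $2$ to $q-1$, $\mathrm{g}_1$ is two-dimensional, $\mathrm{g}_q$ one-dimensional, and maximality forces $\dim\mathrm{g}_i=1$ for $i\ge q$ wherever $\mathrm{g}_i\ne 0$; the condition $\mathrm{g}_{q+2}\ne\{0\}$ together with $\mathrm{g}=\langle\mathrm{g}_1,\mathrm{g}_q\rangle$ and residual nilpotency then propagates nonvanishing to all higher degrees). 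Fix $e_1\in\mathrm{g}_1$ acting nontrivially, a complementary generator in $\mathrm{g}_1$, and $e_q\in\mathrm{g}_q$, and define $e_{i+1}:=[e_1,e_i]$ for $i\ge q$; rescaling the generators (the $\C^*$-action noted in Section~\ref{topology}) we may normalize so that all these $e_i$ are a basis of the respective one-dimensional pieces. The claim $\mathrm{g}\cong\mathrm{m}^q_0$ is then exactly the statement that $[e_i,e_j]=0$ for all $i,j\ge q$, and also $[e_2',e_i]=0$ for the second generator $e_2'$ of $\mathrm{g}_1$ (which for a maximal-class algebra with a lacuna is automatic, since $[\mathrm{g}_1,\mathrm{g}_1]\subseteq\mathrm{g}_2=\{0\}$ and $[\mathrm{g}_1,\mathrm{g}_q]$ is spanned by $e_{q+1}$, so the second generator must act as a scalar multiple of $\mathrm{ad}\,e_1$ up to a change of basis in $\mathrm{g}_1$ — one extracts this first).

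The technical heart is a descending induction, driven by the Jacobi identity, showing that the product $[e_q,e_{q+1}]=0$ (the first hypothesis) together with $[e_q,e_{q+2}]=\cdots=[e_{2q},e_{2q+1}]=0$ forces $[e_q,e_i]=0$ for \emph{all} $i$, and then bootstraps to kill every bracket. The mechanism: for $i\ge q+1$ write $[e_q,e_i]=\lambda_i e_{q+i}$ and apply the Jacobi identity to the triple $(e_1,e_q,e_i)$,
\[
[e_1,[e_q,e_i]] = [[e_1,e_q],e_i] + [e_q,[e_1,e_i]] = [e_{q+1},e_i] + [e_q,e_{i+1}],
\]
which relates $\lambda_{i+1}$, $\lambda_i$, and the coefficient $\mu_i$ of $[e_{q+1},e_i]$; a parallel Jacobi computation with $(e_1,e_{q+1},e_i)$ relates the $\mu$'s to the next family, and so on up the ``staircase'' of brackets $[\mathrm{g}_{q+k},\mathrm{g}_j]$. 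The hypotheses supply the base cases (all the $[\mathrm{g}_{q+k},\mathrm{g}_{q+k+1}]$ with $0\le k\le q$ vanish) and the recursion — which is finite in length at each stage because there are only $q+1$ of these relations — propagates the vanishing. The standard trick here, going back to Vergne and used in \cite{fialowski:83, mill:04, mill:09}, is that knowing a whole block of consecutive structure constants vanish pins down the rest via these two-step Jacobi recursions; the role of the specific range $q$ through $2q$ (that is, $q+1$ consecutive vanishing conditions) is precisely what is needed to close the recursion without leaving an undetermined one-parameter family, which is why the theorem needs exactly this ``technical condition.''

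Concretely I would proceed in this order. \textbf{Step 1:} Set up dimensions and the normalized basis, and dispose of the second generator of $\mathrm{g}_1$ (show that up to a linear change in $\mathrm{g}_1$ it acts trivially). \textbf{Step 2:} Prove by induction on $i$ that $[e_q,e_i]=0$ for all $i\ge q+1$, using the Jacobi identity $(e_1,e_q,e_i)$ and the chain of auxiliary products $[e_{q+1},e_i],[e_{q+2},e_i],\dots,[e_{2q},e_i]$; the hypotheses kill the ``diagonal'' of this chain and Jacobi propagates to the whole chain. \textbf{Step 3:} Having $[\mathrm{g}_q,\mathrm{g}_i]=0$ for all $i$, bootstrap: any bracket $[e_a,e_b]$ with $a,b\ge q$ can be rewritten, using $e_a=[e_1,e_{a-1}]$ and Jacobi, in terms of brackets with one argument lowered, until one argument is $e_q$, giving $0$; formally this is a second induction on $a+b$. \textbf{Step 4:} Conclude that the only nonzero brackets are $[e_1,e_i]=e_{i+1}$, $i\ge q$, i.e.\ $\mathrm{g}\cong\mathrm{m}^q_0$ as a graded Lie algebra. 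The main obstacle I anticipate is \textbf{Step 2}: getting the bookkeeping of the Jacobi recursion exactly right so that the $q+1$ hypotheses really do force the full vanishing — one has to check that no ``resonance'' in the binomial-type coefficients appearing in $[\mathrm{ad}\,e_1^{\,k}\,e_q, e_i]$-type expressions causes the recursion to stall. I would handle this by introducing, for each fixed $i$, the finite vector of coefficients $(\lambda_i, \mu_i^{(1)},\dots,\mu_i^{(q)})$ of the brackets $[e_{q+k},e_{i-k+?}]$ lying on one anti-diagonal, and showing the Jacobi relations express the next anti-diagonal as an invertible linear image of the current one plus a term that the hypotheses make vanish — reducing everything to a triangularity/invertibility statement about an explicit small matrix of integers.
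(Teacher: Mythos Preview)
Your proposal has two issues, one minor and one substantial. The minor one: in this setup $\mathrm{g}_1$ is \emph{one}-dimensional, not two. Maximal class means $\dim\mathrm{g}/\mathrm{g}^2=2$, and since $\mathrm{g}$ is generated by the distinct one-dimensional pieces $\mathrm{g}_1$ and $\mathrm{g}_q$, both of which survive in $\mathrm{g}/\mathrm{g}^2$, there is no ``second generator of $\mathrm{g}_1$'' to dispose of; your Step~1 is based on the $q=1$ picture and is unnecessary here.

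The substantial gap is in Step~2. Your Leibniz recursion $\lambda_{i,j}=\lambda_{i+1,j}+\lambda_{i,j+1}$ together with the $q+1$ hypotheses $\lambda_{q+k,q+k+1}=0$ ($0\le k\le q$) and antisymmetry does force $\lambda_{i,j}=0$ for all $i+j\le 4q+2$. But at the next odd anti-diagonal $i+j=4q+3$ the recursion leaves one free parameter (namely $\lambda_{2q+1,2q+2}$), and the higher Jacobi identities $J(e_i,e_j,e_k)=0$ with $i,j,k\ge q$ and $i+j+k=4q+3$ give \emph{no} constraint: each term $\lambda_{i,j}\lambda_{i+j,k}$ has its first factor sitting on an anti-diagonal $\le 4q+2$, hence already zero, so the identity reads $0=0$. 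Thus your ``invertible linear image'' picture fails---the recursion genuinely stalls, and there is nothing local that pins down $\lambda_{2q+1,2q+2}$. The paper's argument is not local but an obstruction argument: it shows (Proposition~\ref{proposition}) that for any value of this and subsequent free parameters one obtains a genuine finite-dimensional filiform Lie algebra $\mathrm{m}^q_{0,s}(2k+s;\bar\beta)$ as long as $s\le q$, but then (Lemmas~\ref{q-conditions} and~\ref{nocentralextensions}) that once $s$ reaches $2q-1$ with $k>2q$, the Jacobi constraints at level $2k+2q$ become an \emph{inconsistent} linear system in the $\beta_i$ (via a nonsingular $q\times q$ matrix of binomial coefficients), so no further filiform extension exists. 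Infinite-dimensionality of $\mathrm{g}$ then forces every free parameter to have been zero. You would need this delayed-obstruction mechanism, not a step-by-step vanishing, to close the argument.
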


In some sense this result is similar to Theorem \ref{Vergne}. If
$\mathrm{g}$ is generated by two graded components as above, then
under some technical condition there can  only be one isomorphism
type.

Besides, we prove the conjecture for $q=3$ using some general
results on central extensions of $\mathrm{m}^q_0(n)$ obtained in
subsections 3.1 and 3.2.
\par\medskip
\par\medskip

\subsection{Central extensions}

Let $L$ be a Lie algebra and $V$ a vector space  with a
skew-symmetric bilinear form $\theta: L\times L \mapsto V$, i.e.
$\theta(x,x)=0$ for  all $x\in L$. Then $\theta$ as above
satisfying
\[ \theta([x,y],z) + \theta([z,x], y) + \theta([y,z], x) = 0\]
where $x,y,z\in L$ is said to be a \emph{cocycle}. If
$\theta:L\times L\mapsto V$ is a cocycle, then $L_{\theta} = L
\oplus V$ with the product defined by
\[ [x+v, y+w]' = [x,y] + \theta(x,y) \] is a Lie algebra.
Then $L_{\theta}$ is said to be a \emph{central extension} of $L$
by $V$. Note that $V$ is central in $L_{\theta}$. If both $L$ and
$L_{\theta} = L \oplus V$ are filiform, then $\theta \ne 0$.
Otherwise, $L^2_{\theta} = L^2$ and
\[ L_{\theta}/L^2_{\theta} = (L\oplus V)/L^2 = L/L^2 \oplus V.\]
Then
\[ \dim L_{\theta}/L^2_{\theta} = \dim L/L^2 + \dim V \ge 3, \]
since $\dim L/L^2=2$ (this fact holds for any filiform Lie
algebra). Therefore, $L_{\theta}$ cannot be filiform, a
contradiction. Furthermore,  if $L_{\theta}=L\oplus V$ is a
one-dimensional  filiform central extension of a filiform $L$,
i.e. $\dim V =1$,  then $L_{\theta}$ is generated by $L$. Indeed,
since $\dim\,V = 1$, $V=\text{span}\,\{w\}$, $w\ne 0$. As noted
above, $\theta \ne 0$, i.e. there are two $x, y \in L$ such that
$\theta(x,y)\ne 0$. Thus, $[x,y]' = [x,y]+\theta(x,y)=
[x,y]+\alpha w$ for some $\alpha\ne 0$. Hence, $w= \alpha^{-1}
[x,y]' - \alpha^{-1} [x,y] \in \langle L\rangle,$ and, therefore,
$V\subseteq \langle L\rangle$.
\par\medskip

Let $\mathrm{g}$ be an $\mathbb{N}$-graded filiform Lie algebra
generated by nonzero $\mathrm{g}_1$ and $\mathrm{g}_q$, $q>2$.
Then
 $\mathrm{g} = \mathrm{g}_1\oplus \mathrm{g}_q \oplus\ldots
\oplus \mathrm{g}_n$ for some $n$. Without any loss of generality
we assume that $\mathrm{g}_n\ne \{0\}$, otherwise, we discard it.

\begin{Lemma}
Let $\mathrm{g}$ be an $\mathbb{N}$-graded filiform Lie algebra
generated by nonzero  $\mathrm{g}_1$ and $\mathrm{g}_q$.
Additionally, assume that $\mathrm{g}_{2+q}\ne\{0\}.$ Then every
$\mathrm{g}_i$, $i=1, q,\ldots, n$ is a nonzero component of
dimension one.
\end{Lemma}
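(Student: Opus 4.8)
The plan is to exploit the defining property of a Lie algebra of maximal class, namely $\dim \mathrm{g}^i/\mathrm{g}^{i+1}=1$ for all $i\ge 2$ together with $\dim \mathrm{g}/\mathrm{g}^2 = 2$, and to transport this to the grading. First I would record what the hypotheses force on low degrees: since $\mathrm{g}$ is generated by $\mathrm{g}_1$ and $\mathrm{g}_q$ with $q>2$, the components $\mathrm{g}_2,\dots,\mathrm{g}_{q-1}$ must vanish (anything in degree between $2$ and $q-1$ would have to be a bracket of generators, but the only brackets available land in degree $\ge q+1$ or are $[\mathrm{g}_1,\mathrm{g}_1]=0$). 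So in degrees $1$ through $q$ the only nonzero pieces are $\mathrm{g}_1$ and $\mathrm{g}_q$, and I would first argue $\dim\mathrm{g}_1=2$: the abelianization $\mathrm{g}/\mathrm{g}^2$ is generated by the images of $\mathrm{g}_1$ and $\mathrm{g}_q$, but $\mathrm{g}_q\subseteq \mathrm{g}^2$ is impossible unless $\mathrm{g}_q$ maps onto a $1$-dimensional subspace of $\mathrm{g}/\mathrm{g}^2$ — here one checks that $\mathrm{g}_q\not\subseteq[\mathrm{g},\mathrm{g}]$ (degree reasons again: brackets of positively-graded elements land in degree $\ge 2\cdot 1 = 2$, but to reach degree $q$ one would need $[\mathrm{g}_1,\mathrm{g}_{q-1}]$ with $\mathrm{g}_{q-1}=0$, or $[\mathrm{g}_a,\mathrm{g}_b]$ with $a,b\ge 1$ summing to $q$, all of which vanish since the only nonzero components below $q$ are in degree $1$). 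Hence the images of $\mathrm{g}_1$ and $\mathrm{g}_q$ together span $\mathrm{g}/\mathrm{g}^2$, which is $2$-dimensional; combined with $\mathrm{g}_q\not\subseteq\mathrm{g}^2$ and $\mathrm{g}_1\cap\mathrm{g}^2 = 0$ (for degree reasons $\mathrm{g}^2$ has no degree-$1$ part) this gives $\dim\mathrm{g}_1 + \dim\mathrm{g}_q \le 2$ in the abelianization, whence $\dim\mathrm{g}_1 = \dim\mathrm{g}_q = 1$.

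Next I would push the one-dimensionality upward by induction on $i$. The key observation is that the lower central series terms are graded: $\mathrm{g}^2 = \bigoplus_{i\ge q+1}\mathrm{g}_i \ \oplus\ (\text{nothing in degree }\le q)$, and more generally $\mathrm{g}^2/\mathrm{g}^3,\ \mathrm{g}^3/\mathrm{g}^4,\dots$ are each one-dimensional, living in successive degrees. Concretely, the lowest-degree component of $\mathrm{g}^2$ is $\mathrm{g}_{q+1}=[\mathrm{g}_1,\mathrm{g}_q]$, which is one-dimensional and nonzero (if it were zero, $[\mathrm{g}_1,\mathrm{g}_q]=0$ and then $[\mathrm{g}_1,\mathrm{g}_1]=0$ would make $\mathrm{g}_1$ act trivially, contradicting that $\mathrm{g}$ is generated in degrees $1$ and $q$ and has $\mathrm{g}_{q+2}\ne 0$). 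Since each quotient $\mathrm{g}^{j}/\mathrm{g}^{j+1}$ is one-dimensional and these are nonzero for all $j$ (residual nilpotence plus $\mathrm{g}_{q+2}\ne\{0\}$, which forces the algebra not to terminate too early), and since $\mathrm{g}^{j}$ for $j\ge 2$ is concentrated in degrees $\ge q+j-1$ with $\mathrm{g}^{j}/\mathrm{g}^{j+1}$ sitting in exactly one degree, I would match up degrees: $\mathrm{g}^{j}/\mathrm{g}^{j+1} \cong \mathrm{g}_{q+j-1}$ for $j\ge 2$. This exhibits every $\mathrm{g}_i$ with $i\ge q+1$ as a one-dimensional space, and for $i\le q$ we have already handled degrees $1$ and $q$ (dimension one) and degrees $2,\dots,q-1$ (zero — but the lemma only asserts one-dimensionality of the components listed, i.e. $i = 1, q, q+1, \dots, n$, so these vanishing ones are simply not on the list).

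The step I expect to be the main obstacle is verifying that the grading is genuinely compatible with the lower central series in the sharp form "$\mathrm{g}^j/\mathrm{g}^{j+1}$ lives in a single degree, namely $q+j-1$." This is where one must use the generation hypothesis carefully: a priori $\mathrm{g}^2$ could have its one-dimensional top quotient spread out, but because $\mathrm{g}$ is generated by $\mathrm{g}_1$ (degree $1$) and $\mathrm{g}_q$, every element of $\mathrm{g}^j$ is a sum of iterated brackets, and the lowest degree achievable in $\mathrm{g}^j$ grows by exactly $1$ at each step once we are past degree $q$ (since the only "cheap" generator is $\mathrm{g}_1$, bumping degree by $1$, while using $\mathrm{g}_q$ bumps by $q\ge 3$, which would skip past a one-dimensional layer and contradict maximal class). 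Making this precise — showing that there is no "gap" in the grading of $\mathrm{g}$ beyond degree $q$, and that the maximal-class condition $\sum(\dim\mathrm{g}^i/\mathrm{g}^{i+1}-1)=1$ together with $\mathrm{g}_{q+2}\ne 0$ rules out the grading jumping — is the heart of the argument; the rest is bookkeeping with degrees. I would organize it as: (i) degrees $2,\dots,q-1$ vanish; (ii) $\dim\mathrm{g}_1 = \dim\mathrm{g}_q = 1$; (iii) $\mathrm{g}_{q+1}\ne 0$; (iv) by induction, if $\mathrm{g}_i \ne 0$ and is one-dimensional for the range already covered, then $\mathrm{g}_{i+1}=[\mathrm{g}_1,\mathrm{g}_i]$ is nonzero and one-dimensional (nonzero because otherwise the lower central series stabilizes, contradicting $\mathrm{g}_{q+2}\ne 0$ propagated upward, and one-dimensional because it is a quotient of the one-dimensional $\mathrm{g}_i$), up to degree $n$.
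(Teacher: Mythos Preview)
Your outline has the right instincts---use the maximal-class condition $\dim \mathrm{g}^j/\mathrm{g}^{j+1}=1$ and control the grading via the lower central series---but two steps do not go through as written, and the paper handles both differently.

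\textbf{One-dimensionality.} In step (iv) you assert $\mathrm{g}_{i+1}=[\mathrm{g}_1,\mathrm{g}_i]$ and call it ``a quotient of the one-dimensional $\mathrm{g}_i$.'' But for $i+1\ge 2q$ the bracket $[\mathrm{g}_q,\mathrm{g}_{i+1-q}]$ also lands in degree $i+1$, so a priori $\mathrm{g}_{i+1}$ could be two-dimensional; your induction does not rule this out. The paper avoids this by a pure counting argument: list the nonzero homogeneous components as $\mathrm{g}_1,\mathrm{g}_q,\mathrm{g}_{q+1},\mathrm{g}_{q+2},\mathrm{g}_{i_1},\dots,\mathrm{g}_{i_s}$ (allowing gaps), so $\dim\mathrm{g}\ge 4+s$. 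Since bracketing raises degree by at least $1$, one gets the chain of inclusions $\mathrm{g}^{3+r}\subseteq \mathrm{g}_{i_r}\oplus\cdots\oplus\mathrm{g}_{i_s}$, hence nil-index $\le 3+s$. Filiform forces nil-index $=\dim\mathrm{g}-1$, so both inequalities are equalities and every nonzero component is one-dimensional. This also turns all the inclusions into equalities, giving the identification $\mathrm{g}^j/\mathrm{g}^{j+1}$ with a single graded piece that you were aiming for.

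\textbf{No gaps.} You correctly flag this as the crux, but the mechanism you suggest (``otherwise the lower central series stabilizes'') is not right: even if $[\mathrm{g}_1,\mathrm{g}_i]=0$, higher-degree pieces of $\mathrm{g}^j$ can still bracket nontrivially with $\mathrm{g}_1$ or $\mathrm{g}_q$, so nothing stabilizes. The paper's argument is of a different nature. Assuming a first gap at degree $s$ (with $\mathrm{g}_{s+t}$ the next nonzero piece), pass to the quotient $\tilde{\mathrm{g}}=\mathrm{g}/\bigoplus_{j>s+t}\mathrm{g}_j$, which is still filiform. Now compare the canonical graded basis $e_1,e_q,\dots,e_{s-1},e_{s+t}$ with an \emph{adapted} basis $f_1,\dots,f_k$ (where $[f_1,f_i]=f_{i+1}$). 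The gap forces $[e_1,e_{s-1}]=0$; expanding $e_1$ in the $f$-basis then shows its $f_1$-coefficient vanishes, and feeding this into $e_{q+2}=[e_1,e_{q+1}]$ produces a vector with no $f_4$-component, contradicting $e_{q+2}\in\delta f_4+\mathrm{span}\{f_5,\dots\}$ with $\delta\ne 0$. This adapted-basis contradiction is the missing idea in your sketch.
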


\begin{proof}
We first want to prove that if $\mathrm{g}_i \ne \{0\}$, then
$\dim\mathrm{g}_i = 1$.  Since $\mathrm{g} =\langle \mathrm{g}_1,
\mathrm{g}_q \rangle$,   $\mathrm{g}_{2+q} = [\mathrm{g}_1,
[\mathrm{g}_1, \mathrm{g}_{q} ]] \ne 0 $.  Hence, $[\mathrm{g}_1,
\mathrm{g}_{q}]\ne 0$ and  $\mathrm{g}_{1+q}=[\mathrm{g}_1,
\mathrm{g}_{q} ]\ne \{0\}$. Then we can write $\mathrm{g}$ as
\[ \mathrm{g} =
\mathrm{g}_1\oplus\mathrm{g}_q\oplus\mathrm{g}_{q+1}\oplus\mathrm{g}_{q+2}\oplus
\mathrm{g}_{i_1}\oplus\ldots\oplus\mathrm{g}_{i_s} \] where
$\mathrm{g}_{i_1},\ldots,\mathrm{g}_{i_s}$ are the remaining
nonzero graded components. Hence, $\dim\mathrm{g} \ge 4+s$ (the
total number of nonzero components). Since $\mathrm{{g}}$ is
filiform, its nil-index $m = \dim\,\mathrm{{g}} -1\ge 3+s.$
Directly computing components of the lower central series of
$\mathrm{{g}}$ we obtain the following:
\begin{align*}
& \mathrm{{g}}^2 \subseteq
\mathrm{{g}}_{q+1}+\ldots+\mathrm{g}_{i_s},\\
& \mathrm{{g}}^3 \subseteq
\mathrm{{g}}_{q+2}+\ldots+\mathrm{g}_{i_s},\\
& \mathrm{{g}}^4 \subseteq
\mathrm{{g}}_{i_1}+\ldots+\mathrm{g}_{i_s},\\
& \ldots\\
&\mathrm{{g}}^{3+r} \subseteq
\mathrm{{g}}_{i_r}+\ldots +\mathrm{{g}}_{i_s},\\
& \ldots \\
&\mathrm{{g}}^{3+s-1} \subseteq
\mathrm{{g}}_{i_{s-1}}+\mathrm{{g}}_{i_s},\\
&\mathrm{{g}}^{3+s} \subseteq \mathrm{{g}}_{i_s},\\
&\mathrm{{g}}^{4+s}=\{0\}.
\end{align*}
This means that nil-index $m\le 3+s.$ Therefore, $m=\dim
\mathrm{{g}}-1 = 3+s$, and $\dim \mathrm{{g}} = 4+s.$ Since there
are exactly $4+s$  nonzero graded components, each component must
be one-dimensional. Since $\dim\,\mathrm{g}/\mathrm{g}^2 =2$ and
$\dim\,\mathrm{g}^i/\mathrm{g}^{i+1} =1,$ $i\ge 2$,  all
inclusions above become equalities.

We next show that there is no `gap' in the grading from $q+1$ to
$n$. This means that all $\mathrm{g}_i$, $i=q+1,\ldots, n$ must be
nonzero.  Assume the contrary, i.e. there exists $s$, $q< s< n$
such that $\mathrm{g}_s = \{0\}$. Let $s$ be the smallest number
satisfying this condition.  Clearly, $s> 2+q$. Let
$\mathrm{g}_{s+t}$, $t\ge 1$, $s+t \le n$ be the first nonzero
component following $\mathrm{g}_{s-1}$. Consider
$\mathrm{\tilde{g}} = \mathrm{g}/J$ where $J=\bigoplus_{j>s+t}
\mathrm{g}_j$ is the ideal of $\mathrm{g}$. Then
$\mathrm{\tilde{g}}$ is also filiform, and
\[ \mathrm{\tilde{g}} = \mathrm{\tilde{g}}_1 \oplus
\mathrm{\tilde{g}}_q\oplus\ldots\oplus
\mathrm{\tilde{g}}_{s-1}\oplus \mathrm{\tilde{g}}_{s+t} \] where
$\mathrm{\tilde{g}}_i = \mathrm{g}_i + J $, $\dim\,
\mathrm{\tilde{g}}_i = \dim\, \mathrm{{g}}_i=1$. Besides,
$\mathrm{\tilde{g}}$ is generated by $\mathrm{\tilde{g}}_1$ and
$\mathrm{\tilde{g}}_q$ too.   We next choose a basis: ${e}_1,
{e}_q,\ldots, {e}_{s-1}, {e}_{s+t}$ such that $[{e}_1, {e}_q] =
{e}_{q+1}$, and $[{e}_1, {e}_{q+1}] = {e}_{q+2}$. Since
$\mathrm{\tilde{g}}_s = \{0\}$, $[{e}_1, {e}_{s-1}]=0.$ It is
known that a filiform Lie algebra $\mathrm{\tilde{g}}$ has an
\emph{adapted} basis: $f_1, f_2,\ldots, f_k$,
$k=s-q+2=\dim\mathrm{\tilde{g}}$ such that $[f_1, f_i] = f_{i+1}$,
$i=2, \ldots, k-1$, and $[f_i, f_j]\in
\text{span}\{f_{i+j},\ldots, f_k\}$. Moreover,

\begin{align*}
&\mathrm{\tilde{g}}/\mathrm{\tilde{g}}^2 = \text{span}\{f_1,
f_2\}+ \mathrm{\tilde{g}}^2=\text{span}\{e_1, e_q\}+
\mathrm{\tilde{g}}^2,\\
& \ldots\\
& \mathrm{\tilde{g}}^i/\mathrm{\tilde{g}}^{i+1} =
\text{span}\{f_{i+1}\}+ \mathrm{\tilde{g}}^{i+1}=\text{span}\{
e_{q+i-1}\}+
\mathrm{\tilde{g}}^{i+1},\\
&\ldots\\
& \mathrm{\tilde{g}}^{s-q}/\mathrm{\tilde{g}}^{s-q+1}
=\text{span}\{f_{s-q+1}\}+
\mathrm{\tilde{g}}^{s-q+1}=\text{span}\{ e_{s-1}\}+
\mathrm{\tilde{g}}^{s-q+1},\\
& \mathrm{\tilde{g}}^{s-q+1}/\mathrm{\tilde{g}}^{s-q+2}
=\text{span}\{f_{s-q+2}\} = \text{span} \{ e_{s+t} \}
\end{align*}
Therefore, $e_1 = \lambda_1 f_1 + \lambda_2 f_2 + h$ where $h\in
\text{span}\{f_3,\ldots,f_k\}$ and $e_{s-1} = \mu f_{k-1} + \beta
f_k$, $\mu\ne 0$, $k=s-q+2$. Then,
\[ 0= [e_1, e_{s-1}] = [ \lambda_1 f_1+\lambda_2 f_2 +h, \mu
f_{k-1}+\beta f_k] =\lambda_1\mu [f_1, f_{k-1}]= \lambda_1 \mu
f_k.\] Since $\mu\ne 0$ we have that $\lambda_1=0$. Hence,
$e_1=\lambda_2 f_2 +h$. Write $e_{q+1} =\gamma f_3 + h'$,
$\gamma\ne 0$, $h'\in\text{span}\{f_4,\ldots, f_k\}$,  and
$e_{q+2} =\delta f_4 + h''$, $\delta\ne 0$,
$h''\in\text{span}\{f_5,\ldots, f_k\}$. Therefore,
\[ e_{q+2}= [e_1, e_{q+1}] = [\lambda_2 f_2 +h, \gamma f_3 + h'] =
\lambda_2 \gamma f_5 + \bar h,\] where $\bar h \in
\text{span}\{f_6, \ldots, f_k\}.$ Comparing with $e_{q+2} =\delta
f_4 + h''$ we obtain that $\delta=0$, a contradiction. This means
that there cannot be any `gap' in the grading of $\mathrm{g}$. The
proof is complete.
\end{proof}

\begin{Lemma}\label{techlemma2}
Let $\mathrm{g}$ be an $\mathbb{N}$-graded filiform Lie algebra
generated by nonzero  $\mathrm{g}_1,$ $\mathrm{g}_q$, and let
$\mathrm{g}_{2+q}\ne\{0\}.$ Then there is a basis for
$\mathrm{g}:$ $e_1, e_q, \ldots, e_n$ such that $\mathrm{g}_i =
\text{span}\{ e_i \}$ and $[e_1, e_i] = e_{i+1},$ $i=q,\ldots,
n-1$.
\end{Lemma}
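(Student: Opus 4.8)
The plan is to build the basis recursively, grading component by grading component, exploiting the fact (from the preceding Lemma) that every $\mathrm{g}_i$ with $i=1,q,q+1,\dots,n$ is one-dimensional while all intermediate components vanish. First I would pick any nonzero $e_1\in\mathrm{g}_1$ and any nonzero $e_q\in\mathrm{g}_q$. Since $\mathrm{g}=\langle\mathrm{g}_1,\mathrm{g}_q\rangle$ and the grading forces $\mathrm{g}_{q+1}=[\mathrm{g}_1,\mathrm{g}_q]$ (this was shown inside the proof of the previous Lemma), the element $e_{q+1}:=[e_1,e_q]$ is nonzero and hence a basis vector of the one-dimensional space $\mathrm{g}_{q+1}$. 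Continuing, define $e_{i+1}:=[e_1,e_i]$ for $i=q+1,\dots,n-1$; I would check inductively that each such bracket is nonzero, so that $\{e_i\}$ is a genuine basis adapted to the grading with $[e_1,e_i]=e_{i+1}$ by construction.

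The key point to verify is precisely that $\operatorname{ad}(e_1)$ does not kill any of the $e_i$ for $q\le i\le n-1$. Here I would use the structure of the lower central series established in the previous Lemma: the inclusions there become equalities, so $\mathrm{g}^j/\mathrm{g}^{j+1}$ is one-dimensional for every $j\ge 2$ up to the nil-index, and $\mathrm{g}^2=\mathrm{g}_{q+1}\oplus\cdots\oplus\mathrm{g}_n$, $\mathrm{g}^3=\mathrm{g}_{q+2}\oplus\cdots\oplus\mathrm{g}_n$, and in general $\mathrm{g}^{j}=\mathrm{g}_{q+j-1}\oplus\cdots\oplus\mathrm{g}_n$ for $j\ge 2$. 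If $[e_1,e_i]=0$ for some $q\le i\le n-1$, then $\mathrm{g}^{i-q+3}=[\mathrm{g}_1,\mathrm{g}^{i-q+2}]=[\mathrm{g}_1,\mathrm{g}_i\oplus\cdots]\subseteq\mathrm{g}_{i+2}\oplus\cdots$ would skip the component $\mathrm{g}_{i+1}$, contradicting that $\mathrm{g}^{i-q+3}=\mathrm{g}_{i+1}\oplus\cdots\oplus\mathrm{g}_n$ with $\dim\mathrm{g}^{i-q+2}/\mathrm{g}^{i-q+3}=1$. More concretely: if $e_{i+1}$ (a generator of $\mathrm{g}_{i+1}$) were not in $\mathrm{g}^{i-q+3}$, it could never appear in any higher bracket either, since all brackets land in $\mathrm{g}^2$ and the grading pins down where each $\mathrm{g}^j$ sits; this forces $\mathrm{g}_{i+1}$ to lie outside the image of the adjoint action, contradicting that $\mathrm{g}$ is generated in degrees $1$ and $q$ together with $q+1\le i+1\le n$.

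Once the nonvanishing of each $[e_1,e_i]$ is in hand, rescaling is automatic: having chosen $e_1$ and $e_q$, every subsequent $e_{i+1}$ is defined outright as $[e_1,e_i]$, so no further normalization is needed and the relations $[e_1,e_i]=e_{i+1}$ for $i=q,\dots,n-1$ hold on the nose, with $\mathrm{g}_i=\operatorname{span}\{e_i\}$ by one-dimensionality. The main obstacle is the inductive nonvanishing step; everything else is bookkeeping with the grading. I expect this step to go through cleanly precisely because the previous Lemma already upgraded all the lower-central-series inclusions to equalities, so the components of $\mathrm{g}^j$ are completely determined and a vanishing bracket would immediately produce a gap that contradicts those equalities.
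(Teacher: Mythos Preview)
Your approach is essentially the paper's: both reduce the lemma to showing $[\mathrm{g}_1,\mathrm{g}_i]\neq 0$ for each $q\le i\le n-1$ and then set $e_{i+1}:=[e_1,e_i]$. The paper packages the contradiction by passing to the quotient $\tilde{\mathrm{g}}=\mathrm{g}/\bigoplus_{j>i+1}\mathrm{g}_j$ and noting $\dim\tilde{\mathrm{g}}^{i-q+1}/\tilde{\mathrm{g}}^{i-q+2}=2$, whereas you stay in $\mathrm{g}$ and track the graded pieces of $\mathrm{g}^j$ directly; this is only a cosmetic difference. Two small cleanups: your indices are off by one (with your own formula $\mathrm{g}^{j}=\mathrm{g}_{q+j-1}\oplus\cdots$ one has $\mathrm{g}^{i-q+1}=\mathrm{g}_i\oplus\cdots$, not $\mathrm{g}^{i-q+2}$), and $\mathrm{g}^{j+1}=[\mathrm{g},\mathrm{g}^{j}]$, not $[\mathrm{g}_1,\mathrm{g}^{j}]$---though the degree-$(i{+}1)$ component of $[\mathrm{g},\mathrm{g}_i\oplus\cdots]$ indeed comes only from $[\mathrm{g}_1,\mathrm{g}_i]$ since $\mathrm{g}_2=\cdots=\mathrm{g}_{q-1}=0$, so your conclusion stands.
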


\begin{proof}
As follows from the previous lemma, each component is of dimension
one. Therefore, it suffices to show that $[ \mathrm{g}_1,
\mathrm{g}_i ] \ne 0$ for any $i=1,q,\ldots, n-1$. We know that
$[\mathrm{g}_1, \mathrm{g}_q] \ne 0,$  $[\mathrm{g}_1,
\mathrm{g}_{q+1}] \ne 0.$ Assume that there exists $i$, $n> i >
q+1$ such that $[\mathrm{g}_1, \mathrm{g}_i] = 0.$  Consider
$\mathrm{\tilde g} = \mathrm{g}/J $ where $J = \bigoplus_{j> i+1}
\mathrm{g}_j.$ Then
\[ \mathrm{\tilde{g}} = \mathrm{\tilde{g}}_1 \oplus
\mathrm{\tilde{g}}_q\oplus\ldots\oplus
\mathrm{\tilde{g}}_{i}\oplus \mathrm{\tilde{g}}_{i+1} \] where
$\mathrm{\tilde{g}}_l = \mathrm{g}_l + J,$ $l=1,q,\ldots, i+1$.
Then $\mathrm{{\tilde g}}^{i-q+1} =\mathrm{{\tilde g}}_{i}\oplus
\mathrm{{\tilde g}}_{i+1}$ and $\mathrm{{\tilde g}}^{i-q+2} =
[\mathrm{\tilde{g}}, \mathrm{{\tilde g}}_{i}\oplus\mathrm{{\tilde
g}}_{i+1}] = \{0\}.$ This means that $ \dim
\mathrm{\tilde{g}}^{i-q+1}/\mathrm{\tilde{g}}^{i-q+2} = 2$. This
contradicts to the fact that  $\mathrm{{\tilde g}}$ is filiform.
Therefore, $[\mathrm{g}_1, \mathrm{g}_i ] \ne 0$ for any
$i=1,q,\ldots, n-1$. It is now easy to see that we can choose a
required basis for $\mathrm{g}$.
\end{proof}

The following corollaries are immediate consequences of the above
lemmas.

\begin{Corollary}\label{techlemma1}
Let $\mathrm{g}$ be an $\mathbb{N}$-graded filiform Lie algebra
generated by nonzero  $\mathrm{g}_1,$ $\mathrm{g}_q$, and let
$\mathrm{g}_{2+q}\ne\{0\}.$ If $n < 2q+1$, then $\mathrm{g} \cong
\mathrm{m}^q_0(n)$.

\end{Corollary}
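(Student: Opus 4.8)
\textbf{Proof proposal for Corollary \ref{techlemma1}.}
The plan is to bootstrap from Lemma \ref{techlemma2}, which already supplies a basis $e_1, e_q, \ldots, e_n$ of $\mathrm{g}$ with $\mathrm{g}_i = \text{span}\{e_i\}$ and the ladder relations $[e_1, e_i] = e_{i+1}$ for $q \le i \le n-1$. To identify $\mathrm{g}$ with $\mathrm{m}^q_0(n)$ it remains to verify that \emph{all} other brackets among basis vectors vanish, i.e. that $[e_i, e_j] = 0$ whenever $i, j \ge q$ (brackets involving $e_1$ are already pinned down, and there are no basis vectors in degrees $2, \ldots, q-1$). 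The point of the hypothesis $n < 2q+1$ is a degree count: if $i, j \ge q$ then $[e_i, e_j] \in \mathrm{g}_{i+j}$ with $i + j \ge 2q \ge n+1 > n$, so $\mathrm{g}_{i+j} = \{0\}$ and the bracket is forced to be zero. Hence the multiplication table is exactly that of $\mathrm{m}^q_0(n)$ in the standard basis.

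First I would invoke Lemma \ref{techlemma2} to fix the adapted basis and the relations $[e_1, e_i] = e_{i+1}$. Next I would observe that, because $\mathrm{g}$ is $\mathbb{N}$-graded with support in degrees $\{1, q, q+1, \ldots, n\}$, any bracket $[e_i, e_j]$ lands in the graded piece $\mathrm{g}_{i+j}$, and $\mathrm{g}_m = \{0\}$ for $m > n$. Then I would split into the two cases: brackets $[e_1, e_j]$, already determined; and brackets $[e_i, e_j]$ with $i, j \in \{q, q+1, \ldots, n\}$, where $i + j \ge 2q$, and the assumption $n < 2q + 1$, i.e. $n \le 2q$, gives $i + j \ge 2q \ge n$, and in fact $i+j \ge 2q > n$ unless $i = j = q$ and $n = 2q$; but even then $[e_q, e_q] = 0$ by skew-symmetry. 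So in every case the bracket vanishes. Comparing with the definition of $\mathrm{m}^q_0$ (and its quotient $\mathrm{m}^q_0(n) = \mathrm{m}^q_0 / I_n$) yields the claimed isomorphism, the map being $e_i \mapsto e_i$ on the standard basis.

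There is essentially no obstacle here: the corollary is a direct degree-counting consequence of the two preceding lemmas, which is exactly why the authors flag it as "immediate." The only point requiring a moment's care is the boundary case $n = 2q$, where the single potentially-nonzero extra bracket $[e_q, e_q]$ is killed by alternativity rather than by the grading; once that is noted, the verification is complete.
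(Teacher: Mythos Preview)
Your argument is correct and is exactly the degree-count the authors have in mind when they call this an ``immediate consequence'' of Lemmas~\ref{techlemma2} and the preceding one: once the canonical basis with $[e_1,e_i]=e_{i+1}$ is in hand, any bracket $[e_i,e_j]$ with $i,j\ge q$ lands in degree $\ge 2q\ge n$, hence vanishes (the lone boundary case $i=j=q$, $n=2q$ being handled by skew-symmetry, as you note). One small slip: in your first paragraph you write $2q\ge n+1$, which would need $n<2q$ rather than the actual hypothesis $n<2q+1$; your second paragraph fixes this and treats the case $n=2q$ properly, so the proof stands.
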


\begin{Corollary}\label{corollary1}
Let $\mathrm{g}$ be an $\mathbb{N}$-graded Lie algebra of maximal
class generated by both graded components $\mathrm{g}_1$ and
$\mathrm{g}_q$, and let $\mathrm{g}_{q+2}\ne \{0\}.$ Then each
graded component is one-dimensional. Moreover, there exists a
basis for $\mathrm{g}:$ $e_1, e_q, \ldots $ such that
$\mathrm{g}_i = \text{span}\{e_i\}$, and $[e_1, e_i] =e_{i+1}$,
$i>1$.

\end{Corollary}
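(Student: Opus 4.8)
The plan is to deduce both corollaries directly from Lemma \ref{techlemma2} together with the two technical lemmas preceding it. For Corollary \ref{techlemma1} I would argue as follows: since $\mathrm{g}$ is an $\mathbb{N}$-graded filiform Lie algebra generated by $\mathrm{g}_1$ and $\mathrm{g}_q$ with $\mathrm{g}_{q+2}\neq\{0\}$, the previous lemma shows there is no gap in the grading from $q+1$ up to $n$, every graded component $\mathrm{g}_i$ (for $i=1,q,q+1,\dots,n$) is one-dimensional, and by Lemma \ref{techlemma2} there is a basis $e_1,e_q,\dots,e_n$ with $\mathrm{g}_i=\mathrm{span}\{e_i\}$ and $[e_1,e_i]=e_{i+1}$ for $i=q,\dots,n-1$. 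The only products not yet pinned down are $[e_i,e_j]$ with $i,j\ge q$; since $e_{i+j}$ lives in weight $i+j\ge 2q\ge n+1>n$ (using $n<2q+1$, i.e. $n\le 2q$), such a bracket must land in the zero subspace $\mathrm{g}_{i+j}=\{0\}$. Hence all the extra brackets vanish, so the multiplication table is exactly that of $\mathrm{m}^q_0(n)$, giving the isomorphism.

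For Corollary \ref{corollary1} the argument is the infinite-dimensional analogue. Here $\mathrm{g}=\bigoplus_{i\ge 1}\mathrm{g}_i$ is a Lie algebra of maximal class, generated by $\mathrm{g}_1,\mathrm{g}_q$, with $\mathrm{g}_{q+2}\neq\{0\}$. I would apply the two technical lemmas to each finite-dimensional quotient $\mathrm{g}/\bigoplus_{j>N}\mathrm{g}_j$, or more directly observe that the proofs of Lemma (the first one) and Lemma \ref{techlemma2} are local in $n$ and go through verbatim in the graded setting: each nonzero $\mathrm{g}_i$ is one-dimensional, there is no gap (so $\mathrm{g}_i\neq\{0\}$ for all $i\ge q$ and also $i=1$), and $[e_1,e_i]=e_{i+1}$ for all $i>1$ in a suitably normalized basis. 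The point is that the dimension-counting in those lemmas — comparing the number of nonzero components with the nil-index forced by the lower central series — is insensitive to whether the chain terminates, so every step of the finite-dimensional reasoning transfers. This yields the claimed basis $e_1,e_q,\dots$ with $\mathrm{g}_i=\mathrm{span}\{e_i\}$ and $[e_1,e_i]=e_{i+1}$ for $i>1$.

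I do not expect a serious obstacle: both statements are labelled ``immediate consequences'' and the substance lies in the two lemmas already proved. The only mildly delicate point is phrasing the passage from the finite-dimensional lemmas to the (possibly) infinite-dimensional $\mathrm{g}$ in Corollary \ref{corollary1} — one should either quote that the lemmas apply to all truncations $\mathrm{g}/J_N$ compatibly and take a limit of the resulting normalized bases, or note that the proofs never used finiteness of $n$ in an essential way. I would choose the truncation-and-compatibility route since it is cleanest: the basis vectors $e_i$ produced for $\mathrm{g}/J_N$ can be lifted and matched across $N$ because they are determined (up to the earlier normalization) by the bracket relations $[e_1,e_i]=e_{i+1}$, which are stable under further truncation. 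With that observation in place, the two corollaries follow in a few lines each.
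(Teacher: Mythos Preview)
Your approach is correct and aligned with the paper, which offers no proof beyond stating that both corollaries are ``immediate consequences of the above lemmas.'' Your elaboration via truncations $\mathrm{g}/J_N$ (noting they are filiform, generated by the images of $\mathrm{g}_1,\mathrm{g}_q$, and that the normalized bases are compatible across $N$) is a sound way to make the passage to the infinite-dimensional case precise, and is exactly the kind of routine check the authors are leaving to the reader.
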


\begin{Definition}
Let $\mathrm{g}$ be an $\mathbb{N}$-graded filiform Lie algebra as
above. Any basis $\{ e_1, e_q, \ldots, e_n\}$ of $\mathrm{g}$
satisfying $\mathrm{g}_i = \text{span}\{e_i\},$ and $[e_1, e_i] =
e_{i+1}$, $i>1$,  will be called {\bf canonical}.
\end{Definition}

\subsection{Central extensions of $\mathrm{m}^q_0(n)$}
In this section we discuss  one-dimensional $\mathbb{N}$-graded
filiform central extensions of $\mathrm{m}^q_0(n)$. The following
lemma is similar to Corollary 5.3 \cite{mill:04}. As was noted
earlier if $\mathrm{g}$ is an $\mathbb{N}$-graded one-dimensional
filiform central extension of $\mathrm{m}^q_0(n)$, then
$\mathrm{g}$ is generated by $\mathrm{g}_1$ and $\mathrm{g}_q$
since $\mathrm{m}^q_0(n)=\langle \mathrm{g}_1, \mathrm{g}_q
\rangle$ (see the beginning of subsection 3.1).

\begin{Lemma}\label{onedimextensions}
Let $\mathrm{g}$ be a one-dimensional $\mathbb{N}$-graded filiform
central extension of $\mathrm{m}^q_0(n)$. Then

{\bf 1.} If $n=2k+1$, then $\mathrm{g}\cong \mathrm{m}^q_0(2k+2).$

{\bf 2.} If $n=2k$, then either $\mathrm{g}\cong
\mathrm{m}^q_0(2k+1)$ or $\mathrm{g}\cong
\mathrm{m}^q_{0,1}(2k+1)$ defined by the basis $e_1, e_q, \ldots,
e_{2k}, e_{2k+1}$ and structure relations:
\[ [e_1, e_i] = e_{i+1},\,\, i=q,\ldots,2k \,\,\text{and}\,\,
[e_r,  e_{2k+1-r}] = (-1)^{r-k} e_{2k+1},\,\,r=q,\ldots,k.\]

\end{Lemma}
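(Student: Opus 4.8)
The plan is to fix a canonical basis of $\mathrm{g}$, read off the extra structure constants produced by the central extension, collapse them to a single free parameter via the Jacobi identity, and then either kill that parameter by a parity count or normalize it by rescaling. Throughout, I will first set aside the small cases: if $\dim\mathrm{g}=n+1<2q+1$, then Corollary~\ref{techlemma1} (or a direct two-line check when $\mathrm{g}_{q+2}=\{0\}$) already gives $\mathrm{g}\cong\mathrm{m}^q_0(n+1)$, one of the claimed alternatives, so from now on I may assume $\mathrm{g}_{q+2}\neq\{0\}$.

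\emph{Setting up the structure constants.} As recalled just before the statement, a one-dimensional filiform central extension of $\mathrm{m}^q_0(n)$ is generated by $\mathrm{g}_1$ and $\mathrm{g}_q$ and has a nonzero cocycle; by Corollary~\ref{corollary1} every graded component of $\mathrm{g}$ is one-dimensional and $\mathrm{g}$ has a canonical basis. The central ideal $V$ is a homogeneous line, hence is itself one of the one-dimensional graded components; it cannot sit in an ``old'' degree $1,q,q+1,\dots,n$ (those are already accounted for by $\mathrm{g}/V\cong\mathrm{m}^q_0(n)$), nor in a degree $2,\dots,q-1$ (those vanish since $\mathrm{g}$ is generated in degrees $1$ and $q$), and by gap-freeness of the grading of such algebras it cannot lie above $n+1$. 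So $V=\mathrm{g}_{n+1}$, and I write the canonical basis as $e_1,e_q,e_{q+1},\dots,e_n,e_{n+1}$ with $[e_1,e_i]=e_{i+1}$ for $i=q,\dots,n$. Since $\mathrm{g}/V\cong\mathrm{m}^q_0(n)$ has all brackets not involving $e_1$ equal to zero, and by weighted homogeneity of the grading, the only further possibly nonzero brackets in $\mathrm{g}$ are
\[
[e_r,e_{n+1-r}]=c_r\,e_{n+1},\qquad q\le r\le n+1-q,\qquad c_{n+1-r}=-c_r .
\]

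\emph{Jacobi identity.} I then impose the Jacobi identity on the triples $(e_1,e_i,e_j)$ with $i+j=n$, $i,j\ge q$. All other homogeneous triples of total degree $n+1$ give nothing: with no $e_1$, or with $e_1$ occurring twice, every bracket that appears vanishes (non-$e_1$ brackets are zero unless the two degrees add to $n+1$, and $\mathrm{g}$ has nothing in degree $>n+1$). The surviving relation reads $[e_{i+1},e_j]=[e_{j+1},e_i]$, i.e.\ $c_{i+1}=c_{j+1}$, equivalently $c_s=c_{n+2-s}$; combined with the skew-symmetry $c_{n+2-s}=-c_{s-1}$ this yields the recursion $c_s=-c_{s-1}$, hence $c_s=(-1)^{s-q}c_q$ for $q\le s\le n+1-q$. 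The point that needs the most care — and the step I expect to be the main obstacle (purely bookkeeping, but where the argument could silently break) — is verifying that these Jacobi relations and the resulting recursion genuinely propagate over the \emph{entire} interval $q\le s\le n+1-q$; this uses $n\ge 2q$, which holds precisely because the small cases were removed.

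\emph{Parity and normalization.} Finally, substituting $r=q$ into the skew-symmetry relation and using the formula above gives $c_q=-c_{n+1-q}=(-1)^{n}c_q$. If $n=2k+1$ is odd this forces $c_q=0$, so all $c_s$ vanish and $\mathrm{g}$ is defined by $[e_1,e_i]=e_{i+1}$ ($i=q,\dots,n$) alone, i.e.\ $\mathrm{g}\cong\mathrm{m}^q_0(n+1)=\mathrm{m}^q_0(2k+2)$, proving part~1. If $n=2k$ is even the parity relation is vacuous: when $c_q=0$ all $c_s$ vanish and $\mathrm{g}\cong\mathrm{m}^q_0(2k+1)$; when $c_q\neq0$ I rescale the canonical basis by keeping $e_1$ fixed and replacing $e_i$ by $\mu e_i$ for every $i\ge q$ with $\mu=(-1)^{k-q}/c_q$, which preserves all relations $[e_1,e_i]=e_{i+1}$ and multiplies each $c_r$ by $\mu$, turning $c_r=(-1)^{r-q}c_q$ into $c_r=(-1)^{r-k}$. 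Reading off the structure relations for $r=q,\dots,k$ identifies $\mathrm{g}$ with $\mathrm{m}^q_{0,1}(2k+1)$, completing part~2.
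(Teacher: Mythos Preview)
Your proof is correct and follows essentially the same approach as the paper: extend the standard basis of $\mathrm{m}^q_0(n)$ to a canonical basis of $\mathrm{g}$, reduce via the Jacobi identities $J(e_1,e_i,e_j)=0$ with $i+j=n$ to a single free structure constant, kill it in the odd case by skew-symmetry (the paper uses the equivalent observation $\lambda_{k+1,k+1}=0$), and normalize it by rescaling in the even case. Two cosmetic remarks: you should cite Lemma~\ref{techlemma2} rather than Corollary~\ref{corollary1} (the latter is stated for the infinite-dimensional setting), and the paper additionally spells out the verification that $\mathrm{m}^q_{0,1}(2k+1)$ is a Lie algebra, which you have folded into your earlier remark that all other Jacobi triples are vacuous.
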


\begin{proof}

First we consider the case of odd $n=2k+1$. If $k<q$, then $n=2k+1
< 2q+1$ and $n+1=2k+2 < 2q+1$.  By Corollary \ref{techlemma1},
$\mathrm{g}\cong\mathrm{m}^q_0(2k+2)$. Let now $k\ge q$. Let us
choose the  \emph{standard}  basis for $\mathrm{m}^q_0(2k+1)$:
\[e_1, e_q,\ldots, e_{2k+1}\] where $[e_i, e_j] = \lambda_{ij}
e_{i+j},$ $\lambda_{1i}= 1$, $i\ge q$ and $\lambda_{ij} = 0$,
$i,j\ge q$.   Let $\mathrm{g}$ denote an $\mathbb{N}$-graded
one-dimensional filiform central extension of
$\mathrm{m}^q_0(2k+1)$. By Lemma \ref{techlemma2}, the standard
basis can be extended to the following  canonical  basis $e_1,
e_q,\ldots, e_{2k+1}, e_{2k+2}$ of $\mathrm{g}$  such that
\begin{align*}
&[e_i, e_{2k+2-i}] = \lambda_{i,2k+2-i}
e_{2k+2},\,\,{i=q,\ldots,k}.
\end{align*}
Note that if $i+j< 2k+2$ then the products $[e_i, e_j]$ are
exactly the same as in $\mathrm{m}^q_0(2k+1)$. Let us find unknown
structure constants $\lambda_{i,2k+2-i},$ $i=q,\ldots, k.$ We know
that $J(e_1, e_r, e_{2k+1-r})=0$ where $r=q,\ldots, k$ and
$J(\,\,\,)$ is the Jacobian. This equation can be re-written in
terms of $\lambda$'s as follows:
\begin{equation}\label{eq}
\lambda_{1r} \lambda_{1+r, 2k+1-r} + \lambda_{r, 2k+1-r}
\lambda_{2k+1, 1}+ \lambda_{2k+1-r, 1} \lambda_{2k+2-r, r} = 0
\end{equation}
Note that $\lambda_{1r} = 1,$ $\lambda_{2k+1-r, 1} = -1$, and
$\lambda_{r, 2k+1-r}=0.$ Therefore, \eqref{eq} becomes
\begin{equation}
\lambda_{1+r, 2k+1-r} + \lambda_{r, 2k+2-r} =0,\,\, r=q,\ldots,k,
\end{equation}
and $\lambda_{k+1, k+1} = 0$. Clearly, this system has a unique
solution: $\lambda_{r, 2k+2-r} = 0,$ $r=q,\ldots, k$. Thus,
$\mathrm{m}^q_0(2k+2)$ is the only central extension of
$\mathrm{m}^q_0(2k+1)$.

Let us now assume that $n=2k$. If $k < q$, then $2k < 2q$ and
$2k+1 < 2q+1$. By Corollary \ref{techlemma1}, $\mathrm{g}\cong
\mathrm{m}^q_0(2k+1)$. Let now $k\ge q$. Choose the standard basis
for $\mathrm{m}^q_0(2k)$:
\[e_1, e_q,\ldots, e_{2k}\] where $[e_i, e_j] = \lambda_{ij}
e_{i+j},$ $\lambda_{1i}= 1$, $i \ge q$, and $\lambda_{ij} = 0$,
$i,j\ge q$. Let $\mathrm{g}$ be an $\mathbb{N}$-graded
one-dimensional filiform central extension of
$\mathrm{m}^q_0(2k)$. By Lemma \ref{techlemma2},   the standard
basis can be extended to the  canonical  basis $e_1, e_q,\ldots,
e_{2k+1}, e_{2k+2}$ of $\mathrm{g}$  such that where $[e_r,
e_{2k+1-r}] = \lambda_{r, 2k+1-r} e_{2k+1},$ $r=q,\ldots,k$, and
the remaining products are exactly the same as in
$\mathrm{m}^q_{0}(2k)$. Let us find unknown structure constants
$\lambda_{r, 2k+1-r}$, $r=q,\ldots, k$. Since $\mathrm{g}$ is a
Lie algebra we have that for every $r=q,\ldots, k$
\[ J(e_1, e_r, e_{2k-r}) = 0.\]
Therefore,
\begin{equation}\label{eqqq}
\lambda_{1r} \lambda_{1+r, 2k-r} + \lambda_{r, 2k-r} \lambda_{2k,
1}+ \lambda_{2k-r, 1} \lambda_{2k+1-r, r} = 0
\end{equation}
where $\lambda_{1r} = 1$, $\lambda_{2k, 1}=\lambda_{2k-r, 1} = -1$
and   $\lambda_{r, 2k-r}=0$. Equivalently,
\[ \lambda_{1+r, 2k-r} + \lambda_{r, 2k+1 - r} =0. \] Set
$\lambda_{k,k+1}=\beta$. Then
\[ \lambda_{r, 2k+1-r} = (-1)^{k-r} \beta.\]
For $\beta =0,$   $\mathrm{g}$ is isomorphic to
$\mathrm{m}^q_0(2k+1)$. Assume that $\beta\ne 0$. Then introducing
a new $\mathbb{N}$-graded basis $\{ e'_1, e'_q, \ldots,
e'_{2k+1}\}$ such that $e'_1 =e_1$, $e'_i = \beta^{-1} e_i$,
$i=q,\ldots, 2k+1$, we obtain the following structure relations
for $\mathrm{g}$:
\begin{align*}
& [e'_1, e'_i] = e'_{i+1},\,\, i=q,\ldots, 2k\\
& [e'_r, e'_{2k+1-r}] = (-1)^{r-k} e'_{2k+1}.
\end{align*}
This is a Lie algebra since $J(e_i, e_j, e_r) = 0$ for any
admissible $i<j<r$. Indeed,   if $i+j+r< 2k+1$, then $J(e_i, e_j,
e_r)=0$ since $\mathrm{m}^q_0(2k)$ is a Lie algebra. If $i+j+r =
2k+1$, then the following two cases occur.

\emph{Case 1:}  $i\ge q$. Since $q>1$ we have that $i+j< j+r< i+r
< 2k+1$. Consequently,  $\lambda_{ij}= \lambda_{jr}=\lambda_{ir} =
0$. Thus,
\[
  J(e_i, e_j, e_r)= (\lambda_{ij} \lambda_{i+j, r}+\lambda_{jr}\lambda_{j+r, i} +
\lambda_{ri}\lambda_{r+i, j}) e_{i+j+r} = 0.
\]

\emph{Case 2:}  $i=1$. Then $j+r = 2k$, $r = 2k-j$. Then
\begin{align*}
& J(e_1, e_j, e_{2k-j})=\lambda_{1j} \lambda_{1+j, 2k-j} +
\lambda_{j, 2k-j}
\lambda_{2k, 1}+ \lambda_{2k-j, 1} \lambda_{2k+1-j, j}\\
&=\lambda_{1+ j, 2k-j} + \lambda_{j, 2k+1 - j} =0
\end{align*}
The proof is complete.
\end{proof}

\begin{Definition}
The basis $e_1, \ldots, e_{2k+1}$ for $\mathrm{m}^q_{0,1}(2k+1)$
with multiplication table as in Lemma \ref{onedimextensions}  will
be called the \textbf{standard} basis.
\end{Definition}

\begin{Lemma}\label{2dextension}
Let $\mathrm{g}$ be a one-dimensional $\mathbb{N}$-graded filiform
central extension of $\mathrm{m}^q_{0,1}(2k+1)$. Then $\mathrm{g}$
is isomorphic to $\mathrm{m}^q_{0,2}(2k+2)$ defined by its basis:
$e_1, e_q, \ldots, e_{2k+1}, e_{2k+2}$ and structure relations:
\[ [e_1, e_i] = e_{i+1},\,  i=q,\ldots, 2k+1,\,\, [e_l, e_{2k+1-l}] = (-1)^{l-k} e_{2k+1},\, l=q,\ldots,
k,\]
\[ [e_r, e_{2k+2-r}] = (-1)^{r-k} (k+1-r) e_{2k+2},\, r=q,\ldots,
k+1.\]
\end{Lemma}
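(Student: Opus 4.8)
The plan is to proceed exactly as in the proof of Lemma \ref{onedimextensions}: start from the standard basis of $\mathrm{m}^q_{0,1}(2k+1)$, extend it by Lemma \ref{techlemma2} to a canonical basis $e_1, e_q,\ldots, e_{2k+1}, e_{2k+2}$ of the central extension $\mathrm{g}$, and determine the unknown structure constants $\lambda_{r,2k+2-r}$, $r=q,\ldots,k+1$, from the Jacobi identity. Since $\mathrm{g}$ is $\mathbb{N}$-graded with all components one-dimensional (Lemma \ref{onedimextensions} applies to its source, or one argues directly as in Lemma \ref{techlemma2}), the only brackets not already fixed by $\mathrm{m}^q_{0,1}(2k+1)$ are $[e_1,e_{2k+1}] = e_{2k+2}$ and $[e_r, e_{2k+2-r}] = \lambda_{r,2k+2-r} e_{2k+2}$ for $q\le r\le k+1$. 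Writing $\mu_r := \lambda_{r,2k+2-r}$, the task is to solve for the $\mu_r$.

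The key computation is the Jacobi identity $J(e_1, e_r, e_{2k+1-r}) = 0$ for $r = q,\ldots, k$. Expanding in terms of structure constants gives
\[
\lambda_{1r}\,\lambda_{1+r,\,2k+1-r} + \lambda_{r,\,2k+1-r}\,\lambda_{2k+1,\,1} + \lambda_{2k+1-r,\,1}\,\lambda_{2k+2-r,\,r} = 0,
\]
where now $\lambda_{1r} = 1$, $\lambda_{2k+1-r,1} = -1$, and crucially $\lambda_{r,\,2k+1-r} = (-1)^{r-k}$ is the \emph{nonzero} constant coming from $\mathrm{m}^q_{0,1}(2k+1)$ (this is the difference from Lemma \ref{onedimextensions}, where that constant vanished). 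So the recursion reads
\[
\mu_{r+1} \;=\; \mu_r \;-\; (-1)^{r-k}, \qquad r = q,\ldots,k.
\]
Setting $\mu_{k+1} = \beta$ and unwinding the recursion downward, one gets $\mu_r = (-1)^{r-k}(k+1-r) + (-1)^{k+1-r}\beta$ — that is, $\mu_r = (-1)^{r-k}\big((k+1-r) - \beta\big)$ after collecting signs. When $\beta \neq 0$ one rescales the top basis element ($e_{2k+2} \mapsto$ a scalar multiple, or equivalently shifts $\beta$) exactly as in Lemma \ref{onedimextensions} to normalize; the case $\beta = 0$ already gives the asserted relations $[e_r, e_{2k+2-r}] = (-1)^{r-k}(k+1-r)\,e_{2k+2}$. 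One should also check the boundary index $r = k+1$ separately — there the relevant Jacobi identity is $J(e_1, e_{k+1}, e_k) = 0$ (or, if $q = k+1$, the constraint is vacuous), which is consistent with $\mu_{k+1}$ being a free parameter before normalization.

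Finally, I would verify that the resulting bracket genuinely defines a Lie algebra, i.e. that $J(e_i, e_j, e_l) = 0$ for all admissible triples, splitting into the cases $i+j+l < 2k+2$ (inherited from $\mathrm{m}^q_{0,1}(2k+1)$), $i+j+l = 2k+2$ with $i \ge q$ (all three brackets land in too-high degree and vanish, since $q > 1$ forces $i+j, j+l, i+l < 2k+2$ — wait, one must be careful, since $\mathrm{m}^q_{0,1}$ has a nonzero bracket in degree $2k+1$; the check is that each pairwise sum is either $<2k+1$ or the subsequent bracket with the third element vanishes for degree reasons), and $i = 1$, $j + l = 2k+1$, which reduces precisely to the recursion already imposed. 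The main obstacle is bookkeeping: keeping the signs $(-1)^{r-k}$ straight through the recursion and making sure the $i \ge q$ case of the Jacobi check correctly accounts for the extra nonzero bracket of $\mathrm{m}^q_{0,1}(2k+1)$ at degree $2k+1$ rather than naively assuming all intermediate brackets vanish. This is the same subtlety that distinguishes this lemma from Lemma \ref{onedimextensions}, so I would treat it explicitly.
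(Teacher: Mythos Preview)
Your overall strategy matches the paper's: extend the standard basis via Lemma~\ref{techlemma2}, impose $J(e_1,e_r,e_{2k+1-r})=0$ for $r=q,\ldots,k$, solve the resulting recursion for $\mu_r := \lambda_{r,2k+2-r}$, and then verify that the result is a Lie algebra. However, there is a genuine gap at the heart of your argument.

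You treat $\mu_{k+1}$ as a free parameter $\beta$, but $\mu_{k+1} = \lambda_{k+1,\,k+1} = 0$ by skew-symmetry of the bracket: the new top degree $2k+2$ is \emph{even}, so the ``middle'' bracket is $[e_{k+1},e_{k+1}]=0$. This is precisely the anchor that makes the recursion have a \emph{unique} solution, and it is why the lemma asserts a single isomorphism type rather than a one-parameter family. Your proposed normalization step (rescale $e_{2k+2}$, or ``shift $\beta$'') is therefore not only unnecessary but impossible: in the canonical basis $[e_1,e_{2k+1}]=e_{2k+2}$ is already fixed, and any rescaling $e_i \mapsto c\,e_i$ for $i\ge q$ would destroy the normalized coefficients $(-1)^{l-k}$ in the source algebra $\mathrm{m}^q_{0,1}(2k+1)$. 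You are conflating this situation with the even case of Lemma~\ref{onedimextensions}, where the new top degree is odd and a genuine free parameter $\lambda_{k,k+1}$ appears.

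Secondarily, your recursion has a sign slip: the Jacobi identity gives $\mu_{r+1} + \mu_r = (-1)^{r-k}$, not $\mu_{r+1} = \mu_r - (-1)^{r-k}$. With the correct recursion and the initial condition $\mu_{k+1}=0$, you recover $\mu_r = (-1)^{r-k}(k+1-r)$ directly. For the Jacobi check in the case $i\ge q$, $i+j+r=2k+2$: since $q>2$, each pairwise sum is $2k+2$ minus an index $\ge q>2$, hence $<2k$, so all three brackets $\lambda_{ij},\lambda_{jr},\lambda_{ir}$ vanish (the only nonzero bracket in $\mathrm{m}^q_{0,1}(2k+1)$ with both indices $\ge q$ lives in degree $2k+1$, which is not reached). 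This resolves the worry you flagged about the extra nonzero bracket.
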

\begin{proof}
First of all, we determine all $\mathbb{N}$-graded one-dimensional
central extensions of $\mathrm{m}^q_{0,1}(2k+1)$ in the same way
as we did in Lemma \ref{onedimextensions}. Let $e_1, e_q, \ldots,
e_{2k+1}$ denote the standard basis for
$\mathrm{m}^q_{0,1}(2k+1)$.  Then its one-dimensional
$\mathbb{N}$-graded filiform central extension $\mathrm{g}$ can be
defined by the  canonical basis: $ e_1, e_q, \ldots, e_{2k+1},
e_{2k+2}$ (see Lemma \ref{techlemma2}). Arguing in the same way as
in Lemma \ref{onedimextensions} we obtain that $J(e_1, e_r,
e_{2k+1-r})=0$, $r=q,\ldots, k$ is equivalent to
\begin{equation}\label{eq11} \lambda_{1r}\lambda_{1+r, 2k+1-r} + \lambda_{r,
2k+1-r}\lambda_{2k+1,1} + \lambda_{2k+1-r, 1}\lambda_{2k+2-r, r} =
0.\end{equation} Note that in \eqref{eq11} $\lambda_{1r}=1,$
$\lambda_{2k+1, 1} = -1,$ $\lambda_{2k+1-r, 1} = -1$ and
$\lambda_{r, 2k+1-r} = (-1)^{r-k}$. Hence, \eqref{eq11} takes the
form
\begin{equation}
\lambda_{1+r, 2k+1-r} + \lambda_{r, 2k+2-r} = (-1)^{r-k}.
\end{equation}
This yields  $\lambda_{r, 2k+2-r} = (-1)^{r-k} (k+1 -r)$.
Therefore, $\mathrm{g}$  has the same multiplication table as
$\mathrm{m}^q_{0,2}(2k+2)$ does. We next show that
$\mathrm{m}^q_{0,2}(2k+2)$ is indeed a Lie algebra. Consider any
$i,j,r = 1, q,\ldots, 2k+2$ such that $i<j<r$ and $i+j+r=2k+2$.
The following two cases occur.

\emph{Case 1}: $i\ge q$.  Then \[i+j = 2k+2 - r < 2k+2 - q < 2k\]
since $q > 2$. Likewise, $j+r < 2k$ and $i+r < 2k$. Thus,
$\lambda_{ij} = \lambda_{jr} = \lambda_{ir} = 0$, and
\begin{align*}
& J(e_i, e_j, e_r)= (\lambda_{ij} \lambda_{i+j,
r}+\lambda_{jr}\lambda_{j+r, i} + \lambda_{ri}\lambda_{r+i, j})
e_{i+j+r} = 0.
\end{align*}

\emph{Case 2:}  $i=1$. Then $j+r = 2k+1$, $r = 2k+1-j$. Then
\begin{align*}
& J(e_1, e_j, e_{2k+1-j})=\lambda_{1j} \lambda_{1+j, 2k+1-j} +
\lambda_{j, 2k+1-j}
\lambda_{2k+1, 1}+ \lambda_{2k+1-j, 1} \lambda_{2k+2-j, j}\\
&=\lambda_{1+ j, 2k+1-j} + \lambda_{j, 2k+2 - j} - (-1)^{j-k} =0
\end{align*}
Therefore, $J(e_i, e_j, e_r)=0 $ for any $i,j,r=1, q, \ldots,
2k+2$.  This means that $\mathrm{m}^q_{0,2}(2k+2)$ is a Lie
algebra. The proof is complete.

\end{proof}

\begin{Definition} Let
$\mathrm{m}^q_{0,3}(2k+3; \beta_1)$ denote an algebra spanned by
$e_1, e_q,\ldots, e_{2k+2}, e_{2k+3}$  with the following
structure relations:
\[ [e_1, e_i] = e_{i+1},\,  i=q,\ldots, 2k+2,\,\, [e_l, e_{2k+1-l}] = (-1)^{l-k} e_{2k+1},\, l=q,\ldots,
k,\]
\[ [e_j, e_{2k+2-j}] = (-1)^{j-k} (k+1-j) e_{2k+2},\, j=q,\ldots,
k+1,\]
\[[e_r, e_{2k+3-r}] = (-1)^{r-k}\left( \binom{k-r+2}{k-r} - \beta_1\right)
e_{2k+3},\, r=q,\ldots, k+1,\] where $\beta_1$ is any scalar.
\end{Definition}

\par\medskip
\par\medskip

\begin{Definition}\label{MainConstruction}
We inductively define algebras of type $\mathrm{m}^q_{0,s}(2k+s;
\bar\beta)$ where $s\ge 3$, $\bar\beta=(\beta_1,\ldots,\beta_l)$
and $l = \left[\frac{s+1}{2}\right]-1$. An algebra of type
$\mathrm{m}^q_{0,3}(2k+3;\beta_1)$  was introduced above.  Assume
that  $\mathrm{m}^q_{0,s}(2k+s; \bar\beta)$ with a basis: $e_1,
e_q, \ldots, e_{2k+s}$ has been constructed. Then

\par\medskip
1) For an even $s$, $ \mathrm{m}^q_{0, s+1}(2k+s+1; \bar\beta')=
\text{span}\{e_1, e_q, \ldots, e_{2k+s}, e_{2k+s+1}\}$ where
$\bar\beta' = (\beta_1,\ldots,\beta_l,\beta_{l+1})$ (with
additional parameter $\beta_{l+1}$) and
\begin{equation}\label{mainformulas1}
 [e_r, e_{2k+s+1-r}] = (-1)^{k-r}\left(\binom{k-r+s}{k-r}+
\sum^{l+1}_{i=1} (-1)^i \binom{k-r+s-i}{k-r+i}\beta_i \right)
e_{2k+s+1}, \, r=q,\ldots, k+\left[ \frac{s+1}{2} \right].
\end{equation}

2) For an odd $s$, $\mathrm{m}^q_{0, s+1}(2k+s+1;
\bar\beta')=\text{span}\{e_1, e_q, \ldots, e_{2k+s}, e_{2k+s+1}\}$
where $\bar\beta' = \bar\beta= (\beta_1,\ldots,\beta_l)$  and
\begin{equation}\label{mainformulas2}
 [e_r, e_{2k+s+1-r}] = (-1)^{k-r}\left( \binom{k-r+s}{k-r}+
\sum^{l}_{i=1} (-1)^i \binom{k-r+s-i}{k-r+i}\beta_i \right)
e_{2k+s+1},\, r=q,\ldots, k+\left[ \frac{s+1}{2} \right].
\end{equation}

Additionally, $[e_1, e_{2k+s}]= e_{2k+s+1}$, and if $i+j \le
2k+s$, then  $[e_i, e_j]$ remains  the same as in
$\mathrm{m}^q_{0,s}(2k+s; \bar\beta)$.
\end{Definition}

The basis $e_1,\ldots, e_{2k+s+1}$ for $\mathrm{m}^q_{0,
s+1}(2k+s+1; \bar\beta')$ with the above multiplication table will
be called the \textbf{standard} basis.

\begin{Lemma}\label{generalext}
Let  $\mathrm{m}^q_{0,s}(2k+s; \bar\beta)$ be a Lie algebra. Then
$\mathrm{m}^q_{0,s}(2k+s; \bar\beta)$ is filiform. If $\mathrm{g}$
its one-dimensional $\mathbb{N}$-graded filiform central
extension,  then $\mathrm{g}$ is isomorphic to
$\mathrm{m}^q_{0,s+1}(2k+s+1; \bar\beta')$ for some $\bar\beta'$.
\end{Lemma}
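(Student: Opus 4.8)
The plan is to verify the statement in two parts, following the template set by Lemmas \ref{onedimextensions} and \ref{2dextension}. First, I would show that $\mathrm{m}^q_{0,s}(2k+s;\bar\beta)$ is filiform. Since by construction its graded components are all one-dimensional (spanned by $e_1,e_q,e_{q+1},\ldots,e_{2k+s}$) and $[e_1,e_i]=e_{i+1}$ for $i=q,\ldots,2k+s-1$, the lower central series descends by exactly one dimension at each step: $\mathrm{g}^2=\mathrm{span}\{e_{q+1},\ldots,e_{2k+s}\}$, and more generally $\mathrm{g}^i/\mathrm{g}^{i+1}$ is one-dimensional for $i\ge 2$, while $\mathrm{g}/\mathrm{g}^2$ is two-dimensional (spanned by $e_1,e_q$). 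Hence the nil-index is $\dim\mathrm{g}-1$ and the algebra is filiform. Residual nilpotency is automatic in the finite-dimensional case.

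Second, and this is the heart of the matter, I would analyze the one-dimensional $\mathbb{N}$-graded filiform central extension $\mathrm{g}$ of $\mathrm{m}^q_{0,s}(2k+s;\bar\beta)$. By the discussion at the start of subsection 3.1, such a $\mathrm{g}$ is again generated by $\mathrm{g}_1$ and $\mathrm{g}_q$ with $\mathrm{g}_{q+2}\ne\{0\}$, so by Corollary \ref{corollary1} it has one-dimensional graded components and admits a canonical basis $e_1,e_q,\ldots,e_{2k+s},e_{2k+s+1}$; by Lemma \ref{techlemma2} this extends the standard basis of $\mathrm{m}^q_{0,s}(2k+s;\bar\beta)$, so all brackets $[e_i,e_j]$ with $i+j\le 2k+s$ are unchanged, and the only new structure constants are $\lambda_{r,2k+s+1-r}$ with $[e_r,e_{2k+s+1-r}]=\lambda_{r,2k+s+1-r}e_{2k+s+1}$ for $r=q,\ldots,\lfloor(2k+s+1)/2\rfloor$. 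As in the earlier lemmas I would impose the Jacobi identity $J(e_1,e_r,e_{2k+s-r})=0$ for each admissible $r$; expanding it in terms of the $\lambda$'s and using $\lambda_{1,\ast}=1$, $\lambda_{\ast,1}=-1$, together with the known values $\lambda_{r,2k+s-r}$ from the construction of $\mathrm{m}^q_{0,s}(2k+s;\bar\beta)$, yields a linear recursion
\[
\lambda_{1+r,\,2k+s-r}+\lambda_{r,\,2k+s+1-r}=\lambda_{r,\,2k+s-r},\qquad r=q,\ldots,\lfloor(2k+s-1)/2\rfloor,
\]
plus a boundary condition at the top index (e.g.\ $\lambda_{k+\lceil\cdot\rceil,\,\cdot}=0$ forced by skew-symmetry when the two indices coincide). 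For odd $s$ this recursion, solved downward from the boundary, determines every $\lambda_{r,2k+s+1-r}$ uniquely and one checks the closed form matches \eqref{mainformulas2}; for even $s$ the solution has exactly one free parameter $\beta_{l+1}$, and matching gives \eqref{mainformulas1}. The combinatorial identity needed here is the Pascal-type recurrence $\binom{k-r+s-i}{k-r+i}=\binom{k-r+s-1-i}{k-r-1+i}+\binom{k-r+s-1-i}{k-r+i}$, which lets the explicit binomial sums in \eqref{mainformulas1}–\eqref{mainformulas2} telescope correctly under the recursion.

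Finally I would confirm that the algebra so obtained is genuinely a Lie algebra, i.e.\ that $J(e_i,e_j,e_r)=0$ for all admissible triples with $i<j<r$. If $i+j+r\le 2k+s$ this holds because $\mathrm{m}^q_{0,s}(2k+s;\bar\beta)$ is a Lie algebra; if $i+j+r=2k+s+1$, then either $i\ge q$, in which case $q>2$ forces $i+j,j+r,i+r<2k+s$ so all three relevant structure constants vanish and the Jacobian is trivially zero, or $i=1$, in which case the Jacobian is exactly the recursion relation imposed above and hence vanishes by construction. This completes the identification $\mathrm{g}\cong\mathrm{m}^q_{0,s+1}(2k+s+1;\bar\beta')$. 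The main obstacle I anticipate is purely bookkeeping: correctly tracking the binomial coefficients through the recursion and the parity-dependent index ranges $r=q,\ldots,k+\lfloor(s+1)/2\rfloor$, so that the closed-form solution of the linear recursion genuinely coincides with the formulas in Definition \ref{MainConstruction} rather than merely being equivalent up to reindexing.
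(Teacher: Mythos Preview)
Your core approach matches the paper's proof closely: establish filiformity from the explicit grading and the relations $[e_1,e_i]=e_{i+1}$, extend the standard basis to a canonical one via Lemma~\ref{techlemma2}, derive the recursion $\lambda_{1+r,2k+s-r}+\lambda_{r,2k+s+1-r}=\lambda_{r,2k+s-r}$ from $J(e_1,e_r,e_{2k+s-r})=0$, and solve it in the two parity cases. This is exactly what the paper does, and it suffices for the lemma.

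However, your final paragraph (``Finally I would confirm that the algebra so obtained is genuinely a Lie algebra\dots'') is both unnecessary here and incorrectly argued. It is unnecessary because the lemma \emph{assumes} $\mathrm{g}$ is a one-dimensional $\mathbb{N}$-graded filiform central extension, hence already a Lie algebra; the task is only to identify its multiplication table with that of some $\mathrm{m}^q_{0,s+1}(2k+s+1;\bar\beta')$. The paper defers the question of which $\mathrm{m}^q_{0,s+1}(2k+s+1;\bar\beta')$ are actually Lie algebras to Proposition~\ref{proposition}, and crucially that proposition requires $s<q$. Your argument for the case $i\ge q$ is flawed: from $q>2$ you only get $i+j,\,i+r,\,j+r<2k+s$, but that does \emph{not} force $\lambda_{ij}=\lambda_{ir}=\lambda_{jr}=0$, since $\mathrm{m}^q_{0,s}(2k+s;\bar\beta)$ has nonzero products $[e_a,e_b]$ whenever $a+b\ge 2k+1$. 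One needs $j+r\le 2k$, which follows only when $s<q$ (as the paper uses in Proposition~\ref{proposition}). Indeed, for $s\ge q$ not every $\mathrm{m}^q_{0,s+1}(2k+s+1;\bar\beta')$ is a Lie algebra (cf.\ Lemma~\ref{nocentralextensions}), so your argument, as stated, would prove too much. Simply drop this last step; the lemma is already established once the recursion is solved.
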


\begin{proof} By our assumption $\mathrm{m}^q_{0,s}(2k+s;
\bar\beta)$ is a Lie algebra. As follows from Definition
\ref{MainConstruction},
\[ \mathrm{m}^q_{0,s}(2k+s; \bar\beta)= \mathrm{g}_1\oplus
\mathrm{g}_q\oplus\ldots\oplus\mathrm{g}_{2k+s}\] where
$\mathrm{g}_i = \text{span}\{ e_i\}$ is an $\mathbb{N}$-grading.
Since $[e_i, e_i] =e_{i+1},$ $i=q,\ldots,2k+s-1$, we have that
\[\mathrm{g}^2 =
\mathrm{g}_{q+1}\oplus\ldots\oplus\mathrm{g}_{2k+s},\,\,
\text{and}\,\, \mathrm{g}^i =
\mathrm{g}_{q+i-1}\oplus\ldots\oplus\mathrm{g}_{2k+s}\] where
$i>2$. Hence, $\dim\,\mathrm{g}/\mathrm{g}^2 = 2$ and
$\dim\,\mathrm{g}^i/\mathrm{g}^{i+1} = 1$ which is necessary and
sufficient condition for $\mathrm{g}$ to be filiform. It is also
easy to see that $\mathrm{m}^q_{0,s}(2k+s; \bar\beta)$ is
generated by the first two graded components.

Let us now determine all $\mathbb{N}$-graded one-dimensional
filiform central extensions of $\mathrm{m}^q_{0,s}(2k+s;
\bar\beta)$.  Let $e_1, e_q,\ldots, e_{2k+s}$ be the
\emph{standard} basis for $\mathrm{m}^q_{0,s}(2k+s; \bar\beta)$.
By Lemma \ref{techlemma2} its one-dimensional $\mathbb{N}$-graded
filiform central extension can be defined by the following
\emph{canonical} basis:
\[e_1, e_q, \ldots, e_{2k+2l-1}, e_{2k+2l}.\]  Since $\mathrm{g}$ is a
Lie algebra the Jacobian $J(e_1, e_r, e_{2k+s-r}) = 0$, $r=q,
\ldots, k+\left[\frac{s}{2}\right]$. Equivalently,
\begin{equation}\label{eq111}
\lambda_{1r}\lambda_{1+r, 2k+s-r} + \lambda_{r, 2k+s-r}
\lambda_{2k+s, 1} + \lambda_{2k+s-r, 1}\lambda_{2k+s+1-r, r} = 0
\end{equation}
where $\lambda_{1r} = 1,$  $\lambda_{2k+s, 1} =
 \lambda_{2k+s -r, 1}=-1.$
Therefore, it can be re-written as
\begin{equation}\label{eq55}
\lambda_{1+r, 2k+s-r}+\lambda_{r,2k+s+1-r}=\lambda_{r,2k+s-r}.
\end{equation}
Consider the following two cases.

\emph{Case 1}: $s=2l+1$. Then the right side of (\ref{eq55}) is

\[ \lambda_{r, 2k+s-r} = (-1)^{k-r}\left( \binom{k-r+2l}{k-r}
+  \sum^{l}_{i=1} (-1)^i \binom{k-r+2l-i}{k-r+i}\beta_i\right),\]
$r=q, \ldots, k+l$. Since $\lambda_{k+l,k+l}=0$ this system of
linear equations has a unique solution:
\begin{equation}\label{eq1111} \lambda_{r, 2k+s+1 - r} =
(-1)^{k-r} \left( \binom{k-r+2l+1}{k-r} + \sum^{l}_{i=1} (-1)^{i}
\binom{k-r+2l+1-i}{k-r+i} \beta_i\right),
\end{equation} $r=q, \ldots, k+l.$ These structure constants define $\mathrm{m}^q_{0,s+1}(2k+s+1;
\bar\beta')$ where $\bar\beta' =\bar\beta.$

\emph{Case 2}:  $s=2l$.   The right side of (\ref{eq55}):
\[ \lambda_{r, 2k+ s - r} =
(-1)^{k-r} \left( \binom{k-r+2l-1}{k-r} + \sum^{l-1}_{i=1}
(-1)^{i} \binom{k-r+2l-1-i}{k-r+i} \beta_i\right).
\]

Introducing a new parameter $\beta_l = \lambda_{k+l, k+l+1}$ we
obtain

\[ \lambda_{r, 2k+s+1 - r} =
(-1)^{k-r} \left( \binom{k-r+2l}{k-r} + \sum^{l}_{i=1} (-1)^{i}
\binom{k-r+2l-i}{k-r+i} \beta_i\right)
\]
which defines a Lie algebra of type $\mathrm{m}^q_{0, 2k+s+1}
(2k+s+1;\bar\beta') $ where $\bar\beta'=(\bar\beta, \beta_l)$.
This proves the lemma.
\end{proof}

\begin{Proposition}\label{proposition}
\label{marinaprop} {\it For any value of multiparameter $\bar\beta
= (\beta_1, \ldots,\beta_{\left[\frac{s+1}{2}\right]-1})$,
$s=1,\ldots,q,$ $\mathrm{m}^q_{0,s}(2k+s; \bar\beta)$   is a Lie
algebra. Moreover,
\par\medskip
1) if $s<q$ is odd, then $\mathrm{m}^q_{0,s}(2k+s; \bar\beta)$ has
a unique $\mathbb{N}$-graded one-dimensional central extension
which is $\mathrm{m}^q_{0,s+1}(2k+s+1;\bar\beta)$ with the same
multiparameter $\bar\beta$.
\par\medskip

2) if  $0<s< q$ is even, then $\mathrm{m}^q_{0,s}(2k+s;
\bar\beta)$ has infinitely many non-isomorphic $\mathbb{N}$-graded
one-dimensional central extensions. Each such extension is of the
form $\mathrm{m}^q_{0,s+1}(2k+s+1;\bar\beta')$ with multiparameter
$\bar\beta'=(\bar\beta,\beta_{\left[\frac{s+1}{2}\right]})$.
Moreover, for different values of
$\beta_{\left[\frac{s+1}{2}\right]}$ we obtain non-isomorphic
central extensions.}
\end{Proposition}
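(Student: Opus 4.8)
The plan is to prove the three assertions by a single induction on $s$, using Definition~\ref{MainConstruction} for the explicit structure constants and Lemma~\ref{generalext} for the classification of central extensions. First I would show, by induction on $s$ with $1\le s\le q$, that $\mathrm{m}^q_{0,s}(2k+s;\bar\beta)$ satisfies the Jacobi identity for every value of $\bar\beta$; the cases $s=1,2$ are Lemmas~\ref{onedimextensions} and~\ref{2dextension}. For the inductive step (needed only for $s\le q-1$), present $\mathrm{m}^q_{0,s+1}(2k+s+1;\bar\beta')$, as in Definition~\ref{MainConstruction}, as the one-dimensional central extension of $\mathrm{m}^q_{0,s}(2k+s;\bar\beta)$ by the graded skew form $\theta$ with $\theta(e_1,e_{2k+s})=e_{2k+s+1}$ and $\theta(e_r,e_{2k+s+1-r})$ given by \eqref{mainformulas1}--\eqref{mainformulas2}; since $\mathrm{m}^q_{0,s}(2k+s;\bar\beta)$ is a Lie algebra by the induction hypothesis, it remains to verify that $\theta$ is a $2$-cocycle. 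By homogeneity the cocycle identity on a homogeneous triple $x,y,z$ holds automatically unless $\deg x+\deg y+\deg z=2k+s+1$, and then there are only two possibilities: either one of the three arguments is $e_1$, in which case the identity is exactly the relation \eqref{eq55} that the coefficients \eqref{mainformulas1}--\eqref{mainformulas2} were constructed to solve (cf.\ the proof of Lemma~\ref{generalext}); or all three arguments have degree $\ge q$, in which case each of their pairwise degree sums equals $2k+s+1$ minus the third degree, hence is $\le 2k+s+1-q\le 2k$ (using $s\le q-1$), while every nonzero bracket of two degree-$\ge q$ components of $\mathrm{m}^q_{0,s}(2k+s;\bar\beta)$ lands in degree $\ge 2k+1$, so all three brackets in the Jacobi expression vanish. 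Here the hypothesis $q>2$ enters, guaranteeing that an argument other than $e_1$ has degree $\ge q$. This establishes the Lie algebra property for $s=1,\dots,q$.

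Granting this, Lemma~\ref{generalext} applies and shows that every $\mathbb{N}$-graded one-dimensional filiform central extension of $\mathrm{m}^q_{0,s}(2k+s;\bar\beta)$ is isomorphic to $\mathrm{m}^q_{0,s+1}(2k+s+1;\bar\beta')$ for some $\bar\beta'$. For $s$ odd, the analysis of \eqref{eq55} in that proof shows the relevant linear system has a unique solution, forcing $\bar\beta'=\bar\beta$; hence the extension is unique and equals $\mathrm{m}^q_{0,s+1}(2k+s+1;\bar\beta)$, a Lie algebra by the previous paragraph --- this is part~1). For $s$ even, say $s=2m$, the solution of \eqref{eq55} has exactly one free parameter, namely $\beta_m=\lambda_{k+m,k+m+1}$, so the extensions are precisely the algebras $\mathrm{m}^q_{0,s+1}(2k+s+1;(\bar\beta,\beta_m))$ with $\beta_m$ arbitrary, each a Lie algebra by the previous paragraph.

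It remains to show that distinct $\beta_m$ give non-isomorphic algebras, which I expect to be the main obstacle. By Corollary~\ref{corollary1} both $\mathrm{m}^q_{0,s+1}(2k+s+1;(\bar\beta,\beta_m))$ and $\mathrm{m}^q_{0,s+1}(2k+s+1;(\bar\beta,\beta_m'))$ carry the same grading with one-dimensional homogeneous components $\mathrm{span}\{e_d\}$, so any graded isomorphism $\phi$ between them has $\phi(e_d)=\mu_d e_d$, and the relations $[e_1,e_i]=e_{i+1}$ force $\mu_j=\mu_1^{j-q}\mu_q$ for $j\ge q$. Applying $\phi$ to the relation $[e_k,e_{k+1}]=e_{2k+1}$ of the common sub-algebra $\mathrm{m}^q_{0,s}(2k+s;\bar\beta)$ (available once $k\ge q$; for smaller $k$ the algebra degenerates to $\mathrm{m}^q_0(2k+s)$ and there is nothing to prove) forces $\mu_q=\mu_1^{q}$, whence applying $\phi$ to the top relation $[e_{k+m},e_{k+m+1}]$ and comparing the coefficients of $e_{2k+s+1}$, which after substituting the scalings reduce to $\beta_m$ and $\beta_m'$, yields $\beta_m=\beta_m'$. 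Thus different values of $\beta_m$ give non-isomorphic graded Lie algebras; should isomorphisms be understood in the non-graded sense, one first reduces to graded ones using that the grading is recovered from the generating components $\mathrm{g}_1$ and $\mathrm{g}_q$. This completes the proof.
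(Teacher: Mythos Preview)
Your proof is correct and follows essentially the same approach as the paper's: induction on $s$ for the Lie algebra property with the same two-case split of the Jacobi identity (one argument equal to $e_1$ versus all three of degree $\ge q$, the latter forcing all pairwise brackets to vanish since $s<q$), followed by Lemma~\ref{generalext} for the classification of extensions, and then a graded-isomorphism computation comparing the top structure constant. The only cosmetic differences are that you phrase the inductive step as verifying a cocycle condition (equivalent), and you pin down $\mu_q=\mu_1^q$ via the specific relation $[e_k,e_{k+1}]=e_{2k+1}$ whereas the paper invokes an unspecified nonzero $\lambda_{ij}$ with $2k<i+j\le 2k+s$; both work.
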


\begin{proof}

\par\medskip
Let us prove  by induction on $s$ that $\mathrm{m}^q_{0,s}(2k+s;
\bar\beta)$, $s=1,\dots, q$ is a Lie algebra. Lemma
\ref{2dextension} is a basis for induction when $s=1$.  Assume
that for some $s<q$ $\mathrm{m}^q_{0,s}(2k+s; \bar\beta)$ is a Lie
algebra for any $\bar\beta $. Consider $\mathrm{\bar g} =
\mathrm{m}^q_{0,s+1}(2k+s+1; \bar\beta')$. As follows from
Definition \ref{MainConstruction}, $\mathrm{\bar g}$ is obtained
from an appropriate $\mathrm{m}^q_{0,s}(2k+s; \bar\beta)$ by
extending its standard basis and adding relations
(\ref{mainformulas1}) or (\ref{mainformulas2}). Let $e_1, e_q,
\ldots, e_{2k+s}, e_{2k+s+1}$ be  the standard  basis for
$\mathrm{\bar g}$. By our inductive assumption $J(e_i, e_j,
e_k)=0$ if $i+j+r \le 2k+s$. Hence, we only need to consider the
case when $i+j+r=2k+s+1$, $i< j < r$.

If $i\ge q$, then $j+r \le 2k + (s-q)+1 \le 2k$ because $s< q$.
Since
\[ i+j< i+r< j+r \le 2k \] we have that
\[ \lambda_{i,j}=\lambda_{i,r}=\lambda_{j,r} =0. \] Therefore,
\[ J(e_i, e_j, e_k)= (\lambda_{i,j}\lambda_{i+j,k} +
\lambda_{j,k}\lambda_{j+k,i}+\lambda_{k,i}
\lambda_{k+i,j})e_{i+j+k} = 0.\]

If $i=1$, then $j+r = 2k+s$. In this case, $J(e_1, e_j,
e_{2k+s-j}) = 0$ is equivalent to \eqref{eq111}, and as was
already shown, \eqref{eq1111} is a solution to \eqref{eq111}.
Consequently, $\mathrm{m}^q_{0, 2k+s+1} (2k+s+1; \bar\beta')$ is a
Lie algebra for any values of $\bar\beta'.$

On the other hand, by Lemma \ref{generalext} any one-dimensional
filiform central extension $\mathrm{m}^q_{0,s}(2k+s; \bar\beta)$
must be of type $\mathrm{m}^q_{0,s+1}(2k+s+1; \bar\beta')$. For an
odd $s$ it is unique while for an even $s$ there is one-parameter
family of them.

\par\medskip

Let $s$ be a positive even integer such that $1< s< q$. It remains
to show that for different values of the parameter
$\beta_{\left[\frac{s+1}{2}\right]}$ we obtain non-isomorphic
$\mathbb{N}$-graded Lie algebras.  For this, we consider  two
one-dimensional central extensions of $\mathrm{m}^q_{0,s}(2k+s;
\bar\beta)$ corresponding to different values of
$\beta_{\left[\frac{s+1}{2}\right]}$:
\begin{align*}
&\mathrm{g}_1 = \text{span}\{e_1, e_q,\ldots, e_{2k+s},
e_{2k+s+1}\} \\
& [e_r, e_{2k+s+1-r}] = (f_{r,s} + (-1)^{k-r+\frac{s}{2}} \beta)
e_{2k+s+1},
\end{align*}
where
\[ f_{r,s} = (-1)^{k-r}\left( \binom{k-r+s}{k-r} +
\sum^{s/2-1}_{i=1} (-1)^i \binom{k-r+s-i}{k-r+i} \beta_i \right)
\]
 and $\beta$ is a particular value of
$\beta_{\left[\frac{s+1}{2}\right]},$ and
\begin{align*}
&\mathrm{g}_2 = \text{span}\{e'_1, e'_q, \ldots, e'_{2k+s},
e'_{2k+s+1}\} \\
& [e'_r, e'_{2k+s+1-r}] = (f_{r,s} + (-1)^{k-r+\frac{s}{2}}
\beta') e'_{2k+s+1},
\end{align*}
where $f_{r,s}$ is as above
 and $\beta'$
is another value of $\beta_{\left[\frac{s+1}{2}\right]}$ such that
$\beta\ne \beta'$. Notice that $\mathrm{g}_1$ and $\mathrm{g}_2$
have the same structure constants $\lambda_{ij}$ whenever $i+j\le
2k+s$. Let us now assume that $\mathrm{g}_1\cong \mathrm{g}_2$ as
$\mathbb{N}$-graded algebras. This means that there exists a
graded isomorphism $\varphi: \mathrm{g}_1 \to \mathrm{g}_2$
defined by
\[ \varphi(e_i) = \alpha_i e'_i \] where $i=1,q,\ldots, 2k+s+1$. Clearly, every $\alpha_i$ is a nonzero scalar.
Then $\varphi([e_1, e_i]) = [\varphi(e_1), \varphi(e_i)],$
$i=1,q,\ldots, 2k+s$. Hence, \[\alpha_{i+1} =
\alpha_1\cdot\alpha_i,\] $i=q,q+1,\ldots, 2k+s$ which means that

\begin{equation} \label{eq44} \alpha_i = \alpha^{i-q}_1\cdot\alpha_q,
\end{equation}
 where $i=q+1,
\ldots, 2k+s+1.$ Next, we can always choose $i,j>1$, $2k< i+j \le
2k+s$ such that $\lambda_{ij}\ne 0$. Then $\varphi([e_i, e_j])=
[\varphi(e_i), \varphi(e_j)],$ $\lambda_{ij}\alpha_{i+j} =
\lambda_{ij}\alpha_i\alpha_j,$ $\alpha_{i+j} =\alpha_i\alpha_j$.
Using \eqref{eq44} we get $\alpha_q = \alpha^q_1.$ It follows from
multiplication tables of $\mathrm{g}_1$ and $\mathrm{g}_2$ that
for $r_0 = k + \frac{s}{2}$ we have
\[ [e_{r_0}, e_{2k+s+1-r_0}] = \beta e_{2k+s+1},\,\,[e'_{r_0}, e'_{2k+s+1-r_0}] = \beta'
e'_{2k+s+1}.\] Therefore, $\varphi([e_{r_0}, e_{2k+s+1-r_0}]) =
[\varphi(e_{r_0}), \varphi(e_{2k+s+1-r_0})]$ which means that
\[ \beta \alpha_{2k+s+1}  = \beta' \alpha_{r_0}
\alpha_{2k+s+1-r_0}.\] Using \eqref{eq44} we obtain $\beta
\alpha^{2k+s+1}_1 = \beta' \alpha^{2k+s+1}_1$, hence, $\beta =
\beta'$ which contradicts to our original assumption. Thus,
$\varphi$ is not an isomorphism.  The proof is complete.
\end{proof}

\beR In order to simplify notation for $s$th central extension of
$\mathrm{m}^q_0(2k)$ we will omit $\bar\beta$ in $\mathrm{m}^q_{0,
2k+s}(2k+s; \bar\beta)$ whenever the value of $\bar\beta$ is not
important and denote it by  $\mathrm{m}^q_{0, 2k+s}(2k+s)$.
\eeR

We next focus on studying $m$th filiform central extensions of
$\mathrm{m}^q_0(2k)$ where $m>q$.

\begin{Lemma}\label{lemma3}
Let $k > q$, and $\mathrm{g}=\mathrm{m}^q_{0, q+s+1}(2k+q+s+1)$,
$s\ge 1$, be a Lie algebra. If $\lambda_{q, 2k+s} = 0$, then
either $\lambda_{q, 2k+s+1} = 0$ or $\lambda_{q+1, 2k+s-q} +
\lambda_{q, 2k+s-q} = 0$.
\end{Lemma}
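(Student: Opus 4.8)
The plan is to extract the two relations we need from the Jacobi identity, applied to two carefully chosen triples whose total degree equals $2k+q+s+1$, the degree of the top basis vector $e_{2k+q+s+1}$ of $\mathrm{g}$ (which is nonzero, as recorded by the notation $\mathrm{m}^q_{0,q+s+1}(2k+q+s+1)$). Since $\mathrm{g}$ is built from $\mathrm{m}^q_0(2k)$ by a sequence of one-dimensional $\mathbb{N}$-graded filiform central extensions, it carries a canonical basis $e_1,e_q,e_{q+1},\ldots,e_{2k+q+s+1}$ with $[e_1,e_i]=e_{i+1}$ for $i\ge q$; as usual write $[e_i,e_j]=\lambda_{ij}e_{i+j}$, so that $\lambda_{1,i}=1$ for $i\ge q$ and $\lambda_{ji}=-\lambda_{ij}$. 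A preliminary observation, and the only place the hypothesis $k>q$ is really used, is that $\lambda_{q,q+1}=0$: the bracket $[e_q,e_{q+1}]$ has degree $2q+1$, and $k>q$ forces $2q+1<2k$, so by Definition \ref{MainConstruction} (applied inductively down the chain of extensions, since a central extension only modifies the top bracket) it equals its value in $\mathrm{m}^q_0(2k)$, where all products among $e_q,e_{q+1},\ldots$ vanish.

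First I would write out $J(e_1,e_q,e_{2k+s})=0$. Expanded in the usual way this reads $\lambda_{1,q}\lambda_{q+1,2k+s}+\lambda_{q,2k+s}\lambda_{2k+s+q,1}+\lambda_{2k+s,1}\lambda_{2k+s+1,q}=0$; plugging in $\lambda_{1,q}=1$, the hypothesis $\lambda_{q,2k+s}=0$, and $\lambda_{2k+s,1}=-1$, it collapses to the relation
\[ \lambda_{q+1,2k+s}+\lambda_{q,2k+s+1}=0 . \]
All three terms here have degree $2k+s+q+1$, which is exactly the top degree, so the vanishing of the coefficient of $e_{2k+s+q+1}$ is a genuine constraint rather than an automatic one. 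I will refer to this displayed identity as $(*)$.

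Next I would apply $J(e_q,e_{q+1},e_{2k+s-q})=0$; note $2k+s-q\ge q+2$ since $k>q$, so this triple is admissible, and its degrees again sum to $2k+s+q+1$. Expansion gives $\lambda_{q,q+1}\lambda_{2q+1,2k+s-q}+\lambda_{q+1,2k+s-q}\lambda_{2k+s+1,q}+\lambda_{2k+s-q,q}\lambda_{2k+s,q+1}=0$; the first term drops because $\lambda_{q,q+1}=0$, and rewriting the other two with antisymmetry and then using $(*)$ to replace $\lambda_{q+1,2k+s}$ by $-\lambda_{q,2k+s+1}$ turns the identity into
\[ \lambda_{q,2k+s+1}\bigl(\lambda_{q+1,2k+s-q}+\lambda_{q,2k+s-q}\bigr)=0 , \]
whence $\lambda_{q,2k+s+1}=0$ or $\lambda_{q+1,2k+s-q}+\lambda_{q,2k+s-q}=0$, as claimed. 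The main obstacle is simply choosing these two Jacobi triples so that their total degree lands precisely on the top component (rather than overshooting it, which would make the relation vacuous), together with establishing $\lambda_{q,q+1}=0$: without the latter the surviving term $\lambda_{q,q+1}\lambda_{2q+1,2k+s-q}$ would block the factorization. The rest is routine sign bookkeeping.
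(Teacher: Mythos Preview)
Your proof is correct and follows essentially the same route as the paper: establish $\lambda_{q,q+1}=0$ from $k>q$, use the Jacobi identity on $(e_q,e_{q+1},e_{2k+s-q})$, and combine it with the relation $\lambda_{q+1,2k+s}+\lambda_{q,2k+s+1}=\lambda_{q,2k+s}$ to factor. The only cosmetic difference is that the paper invokes the latter relation as the ``Leibniz rule for derivations'' (equation~\eqref{LR}, coming from $\mathrm{ad}(e_1)$ being a derivation) rather than writing out $J(e_1,e_q,e_{2k+s})$ explicitly, but these are the same computation.
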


\begin{proof}
Since $k>q$, $2k > 2q+1 = q+(q+1)$. As follows from Definition
\ref{MainConstruction} the product $[e_q, e_{q+1}]$ in
$\mathrm{g}$ must be the same as in $\mathrm{m}^q_0(2k)$.
Therefore,  $[e_q, e_{q+1}] = \lambda_{q,q+1} e_{2q+1}=0$. Since
$\mathrm{m}^q_{0, q+s+1}(2k+q+s+1)$ is a Lie algebra, we have that
$J(e_q, e_{q+1}, e_{2k+s-q}) = 0$. Equivalently,
\[ \lambda_{q+1, 2k+s-q} \lambda_{2k+s+1, q} + \lambda_{2k+s-q, q}
\lambda_{2k+s, q+1} = 0.\]

By Leibnitz rule for derivations (see the beginning of subsection
3.3)
\[ \lambda_{q+1, 2k+s} + \lambda_{q, 2k+s+1}= \lambda_{q, 2k+s}.\]
Hence, $\lambda_{q+1, 2k+s} = \lambda_{q, 2k+s} - \lambda_{q,
2k+s+1} = -\lambda_{q, 2k+s+1}.$ Thus, the above equation takes
the form:
\[ (\lambda_{q+1, 2k+s-q} + \lambda_{q, 2k+s-q}) \lambda_{q,
2k+s+1}=0. \]  Hence, either $\lambda_{q+1, 2k+s-q} + \lambda_{q,
2k+s-q}=0$ or $\lambda_{q, 2k+s+1}=0$  as required.
\end{proof}

\begin{Lemma}\label{lemma2}
Let $k>q$. If both $\mathrm{m}^q_{0, q+1}(2k+q+1)$ and
$\mathrm{m}^q_{0, q+2}(2k+q+2)$ are Lie algebras, then

(1) $\lambda_{q, 2k+1}=0,$

(2) $\lambda_{q, 2k+2}= 0$ if  $k > q+1$.
\end{Lemma}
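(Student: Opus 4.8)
The plan is to extract both vanishing statements from the Jacobi identities involving the generator $e_1$, the generator $e_q$, and a suitably chosen third basis vector, exploiting the hypothesis $k>q$ (which guarantees that the relevant low-degree products $[e_q,e_q],[e_q,e_{q+1}],\dots$ still agree with those in $\mathrm{m}^q_0(2k)$ and are therefore zero). First I would recall, as in the proof of Lemma \ref{onedimextensions}, that the structure constants $\lambda_{r,2k+1-r}$ of a Lie algebra of type $\mathrm{m}^q_{0,q+1}(2k+q+1)$ satisfy the recursion coming from $J(e_1,e_r,e_{2k-r})=0$, namely $\lambda_{1+r,2k-r}+\lambda_{r,2k+1-r}=\lambda_{r,2k-r}$, where the right-hand side is the already-determined structure constant at level $2k$ (which for $\mathrm{m}^q_0(2k)$ is simply $0$ whenever $r\ge q$, since $i,j\ge q$ forces $\lambda_{ij}=0$ in $\mathrm{m}^q_0(2k)$). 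Tracing this down to $r=q$ and using that $k>q$ ensures all intermediate constants $\lambda_{r,2k-r}$ with $r$ between $q$ and $k$ are the ones inherited from $\mathrm{m}^q_0(2k)$, one sees the system forces $\lambda_{q,2k+1}=0$; this is statement (1). The key point is that the "initial condition" $\lambda_{k+1,k+1}=0$ (skew-symmetry) together with the homogeneous recursion propagates backwards to kill $\lambda_{q,2k+1}$ exactly as in the proof of Lemma \ref{onedimextensions} where the unique solution was the trivial one.

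For statement (2), I would apply Lemma \ref{lemma3} with $s=1$: since $\mathrm{g}=\mathrm{m}^q_{0,q+2}(2k+q+2)$ is assumed to be a Lie algebra, and since by part (1) we have $\lambda_{q,2k+1}=0$ (this is the hypothesis "$\lambda_{q,2k+s}=0$" of Lemma \ref{lemma3} with $s=1$), the conclusion of Lemma \ref{lemma3} gives that either $\lambda_{q,2k+2}=0$ or $\lambda_{q+1,2k-q}+\lambda_{q,2k-q}=0$. So it remains to rule out the second alternative under the extra hypothesis $k>q+1$. Here I would again invoke that $k>q+1$ forces $2k-q>q+1$, so that the constants $\lambda_{q+1,2k-q}$ and $\lambda_{q,2k-q}$ both live in the range where they were pinned down by the level-$2k$ structure of $\mathrm{m}^q_0(2k)$ — i.e. they are inherited products $[e_{q+1},e_{2k-q}]$ and $[e_q,e_{2k-q}]$ with both indices $\ge q$, hence both are zero (indeed $\mathrm{m}^q_0(2k)$ has $\lambda_{ij}=0$ for $i,j\ge q$, and $q+1+(2k-q)=2k+1>2k$, $q+(2k-q)=2k$, so these are genuinely the bracket of two $e_i$'s with $i,j>1$). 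Thus the identity $\lambda_{q+1,2k-q}+\lambda_{q,2k-q}=0$ holds trivially and gives no information, so the dichotomy of Lemma \ref{lemma3} does not by itself decide the matter — instead I would push the recursion from $J(e_1,e_r,e_{2k+1-r})=0$ for the level-$(2k+2)$ constants, which reads $\lambda_{1+r,2k+1-r}+\lambda_{r,2k+2-r}=\lambda_{r,2k+1-r}$, and observe that by part (1) the right-hand side vanishes for $r=q$ while the backward propagation from $\lambda_{k+1,k+1}=0$ (or the appropriate skew-symmetric boundary term) again forces $\lambda_{q,2k+2}=0$.

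The main obstacle I anticipate is bookkeeping the exact ranges: one must check carefully that when $k>q$ (resp. $k>q+1$) every structure constant appearing on the right-hand side of the recursions, and every product fed into the Jacobi identity of Lemma \ref{lemma3}, really does lie in the "inherited" regime $i,j\ge q$ with $i+j\le 2k$ where it equals the corresponding product in $\mathrm{m}^q_0(2k)$ and hence vanishes — the whole argument collapses at the boundary cases $k=q$ and $k=q+1$, which is precisely why the hypotheses are stated the way they are. A secondary subtlety is making sure the skew-symmetry boundary condition ($\lambda_{k+1,k+1}=0$ in the odd case, or the analogous middle term in the even case) is correctly placed so that the homogeneous linear recursion has only the trivial solution; this is routine but must be done with the parity of $2k+1$ versus $2k+2$ kept straight.
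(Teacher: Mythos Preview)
Your argument has a genuine gap: you have misidentified which structure constants vanish. The key misconception is that ``any bracket $[e_i,e_j]$ with $i,j\ge q$ is zero.'' This holds in $\mathrm{m}^q_0(2k)$, but in the iterated extension $\mathrm{m}^q_{0,s}(2k+s)$ it is only true when $i+j\le 2k$ (so that the product is inherited from $\mathrm{m}^q_0(2k)$). For $i+j=2k+t$ with $1\le t\le s$, the constants $\lambda_{i,j}$ are given by the explicit nonzero formulas of Lemmas~\ref{onedimextensions}, \ref{2dextension} and Definition~\ref{MainConstruction}.

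Concretely, for part~(1) you propose the recursion $\lambda_{1+r,2k-r}+\lambda_{r,2k+1-r}=\lambda_{r,2k-r}$; but setting $r=q$ here yields $\lambda_{q,2k+1-q}$, not $\lambda_{q,2k+1}$. The quantity $\lambda_{q,2k+1}$ lives at level $q+(2k+1)=2k+q+1$, so the relevant $e_1$-recursion is $\lambda_{1+r,2k+q-r}+\lambda_{r,2k+q+1-r}=\lambda_{r,2k+q-r}$, whose right-hand side (level $2k+q$) is \emph{not} zero. The paper instead uses $J(e_q,e_{q+1},e_{2k-q})=0$: since $2q+1<2k$ one has $\lambda_{q,q+1}=0$, and since $q+(2k-q)=2k$ one has $\lambda_{q,2k-q}=0$; the surviving term is $\lambda_{q+1,2k-q}\,\lambda_{2k+1,q}$ with $\lambda_{q+1,2k-q}=(-1)^{q+1-k}\ne 0$ (this is a level-$(2k+1)$ constant from $\mathrm{m}^q_{0,1}(2k+1)$), forcing $\lambda_{q,2k+1}=0$.

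For part~(2), invoking Lemma~\ref{lemma3} with $s=1$ is exactly right, but the alternative it produces is $\lambda_{q+1,2k+1-q}+\lambda_{q,2k+1-q}=0$ (you wrote $2k-q$). These two constants sit at levels $2k+2$ and $2k+1$ respectively, and by Lemmas~\ref{2dextension} and~\ref{onedimextensions} they equal $(-1)^{q+1-k}(k-q)$ and $(-1)^{q-k}$. Their sum is $(-1)^{q-k}(q-k+1)$, which is nonzero precisely when $k>q+1$; \emph{that} is what rules out the second alternative and forces $\lambda_{q,2k+2}=0$. Your claim that both are zero, and your fallback recursion argument, both rest on the same mistaken premise.
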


\begin{proof}
Since $k>q$, $2k > 2q+1 = q+(q+1)$, and similarly to Lemma
\ref{lemma3} we can show that $[e_q, e_{q+1}]= \lambda_{q,q+1}
e_{2q+1} = 0$.
\par\medskip

(1) By our assumption, $\mathrm{m}^q_{0,q+1}(2k+q+1)$ is a Lie
algebra. Thus, $J(e_q, e_{q+1}, e_{2k-q}) = 0$. Equivalently,
\[ \lambda_{q+1, 2k-q} \lambda_{2k+1, q} + \lambda_{2k-q, q}
\lambda_{2k, q+1} =0 \]  As follows from multiplication table of
$\mathrm{m}^q_{0,q+1}(2k+q+1)$, $\lambda_{2k-q,q}=0$ and
$\lambda_{q+1, 2k-q} = (-1)^{q+1-k} \ne 0$. Therefore,
$\lambda_{q, 2k+1}=0$. Notice that $\lambda_{q, 2k+1} = 0$  in
$\mathrm{m}^q_{0,q+2}(2k+q+2)$ as well.
\par\medskip

(2)  Since $\lambda_{q, 2k+1}=0$ we can use Lemma \ref{lemma3} for
$s=1$. Hence, either $\lambda_{q, 2k+2} = 0$ or
\[\lambda_{q+1, 2k+1-q} + \lambda_{q, 2k+1-q}  = 0.\]
Recall that $\lambda_{q+1, 2k+1-q} = (-1)^{q+1-k}(k-q)$ and
$\lambda_{q, 2k+1-q} = (-1)^{q-k}$. Hence, if $k > q+1$, then
\[\lambda_{q+1, 2k+1-q} + \lambda_{q, 2k+1-q}  \ne 0.\]
Thus, $\lambda_{q, 2k+2} =0$, as required.

\end{proof}

\par\medskip

\subsection{Proof of Theorem \ref{MainTheorem}}

Let $\mathrm{g} = \text{span}\{e_1,\ldots,e_n\}$ be an
$\mathbb{N}$-graded Lie algebra such that

\[
[e_1,e_i] = e_{i+1},\,i=1,\ldots,n-1,\,\,[e_i,e_j]=\lambda_{ij}
e_{i+j},\,i,j>1.\]
 The Leibnitz rule for derivation
$\text{ad}(e_1)$ yields

\begin{equation}\label{LR} \lambda_{ij} = \lambda_{i+1, j} + \lambda_{i, j+1}.\end{equation}
\par\medskip
\begin{Lemma}\label{q-conditions}
Let $k> 2q$. If $\mathrm{g}=\mathrm{m}^q_{0, 2q}(2k+2q)$  is a Lie
algebra, then we have
\[ \lambda_{2q-1, 2k+1} = \lambda_{2q-2, 2k+2}= \ldots
=\lambda_{q, 2k+q} = 0.\]
\end{Lemma}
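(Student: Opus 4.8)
The plan is to split the asserted chain of $q$ equalities into two stages. First I would show that $\lambda_{q,2k+1}=\lambda_{q,2k+2}=\cdots=\lambda_{q,2k+q}=0$, i.e.\ that $\lambda_{q,\,\cdot\,}$ vanishes on the whole block of levels $2k+1,\ldots,2k+q$; then I would bootstrap from this block to the full antidiagonal $\lambda_{2q-1,2k+1},\ \lambda_{2q-2,2k+2},\ \ldots,\ \lambda_{q,2k+q}$ by a Pascal-type argument using the Leibnitz relation \eqref{LR}.

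For the first stage I would induct on $j$, proving $\lambda_{q,2k+j}=0$ for $1\le j\le q$. Note that each truncation $\mathrm{m}^q_{0,t}(2k+t)$ with $t\le 2q$ is a quotient of $\mathrm g=\mathrm{m}^q_{0,2q}(2k+2q)$ by a graded ideal, hence is again a Lie algebra, and that $k>2q>q+1$; thus the hypotheses of Lemma~\ref{lemma2} and of Lemma~\ref{lemma3} applied to $\mathrm{m}^q_{0,q+j+1}(2k+q+j+1)$ (legitimate since $q+j+1\le 2q$ for $j\le q-1$) are in force. The cases $j=1,2$ are exactly Lemma~\ref{lemma2}. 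For the inductive step, Lemma~\ref{lemma3} (with parameter $j$) says that $\lambda_{q,2k+j}=0$ forces $\lambda_{q,2k+j+1}=0$ \emph{unless} $\lambda_{q+1,2k+j-q}+\lambda_{q,2k+j-q}=0$, so the task is to exclude this exceptional alternative for $2\le j\le q-1$. Because $j<q$, the constants $\lambda_{q,2k+j-q}$ and $\lambda_{q+1,2k+j-q}$ sit at levels $2k+j$ and $2k+j+1$, which are realized inside $\mathrm{m}^q_{0,j}(2k+j)$ and $\mathrm{m}^q_{0,j+1}(2k+j+1)$; these are Lie algebras for all values of the parameters by Proposition~\ref{proposition}, and the two constants are given by the explicit binomial formulas of Lemmas~\ref{onedimextensions}, \ref{2dextension} and Definition~\ref{MainConstruction}. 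Evaluating the sum and using $k>2q$ should make it nonzero: for $j=1$ this is precisely the computation in the proof of Lemma~\ref{lemma2}(2), where the sum equals $(-1)^{q-k}(q+1-k)\neq0$. The catch is that for $j\ge 3$ the formula for $\lambda_{q+1,2k+j-q}$ involves the free parameters $\beta_i$, so one must either invoke the additional Jacobi relations that ``$\mathrm g$ is a Lie algebra'' imposes on the $\beta_i$, or strengthen the induction hypothesis to carry along the vanishing of $\lambda_{q+1,\,\cdot\,}$ at the earlier levels. I expect this parameter bookkeeping to be the main obstacle.

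The second stage is clean. Write \eqref{LR} as $\lambda_{a+1,b}=\lambda_{a,b}-\lambda_{a,b+1}$, valid whenever $a+b\le 2k+2q-1$. I would prove by induction on $a$ that $\lambda_{q+a,m}=0$ for all $0\le a\le q-1$ and all $m$ with $2k+1\le m\le 2k+q-a$. The case $a=0$ is the first stage. For the step: if $2k+1\le m\le 2k+q-a$ then $(q+a-1)+m\le 2k+2q-1$, so \eqref{LR} gives $\lambda_{q+a,m}=\lambda_{q+a-1,m}-\lambda_{q+a-1,m+1}$, and both terms on the right vanish by the inductive hypothesis since $m$ and $m+1$ lie in $[2k+1,\,2k+q-(a-1)]$. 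Taking $m=2k+q-a$ and setting $a=q-1-i$ for $i=0,\ldots,q-1$ yields $\lambda_{2q-1-i,\,2k+1+i}=0$, i.e.\ $\lambda_{2q-1,2k+1}=\lambda_{2q-2,2k+2}=\cdots=\lambda_{q,2k+q}=0$, which is the assertion of the lemma.
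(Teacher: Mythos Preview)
Your two-stage plan is natural, and stage~2 (the Pascal-triangle bootstrap via \eqref{LR}) is entirely correct and also appears in the paper. The genuine gap is in stage~1, and you have in fact put your finger on it: iterating Lemma~\ref{lemma3} requires, at step $j$, that $\lambda_{q+1,2k+j-q}+\lambda_{q,2k+j-q}\neq 0$, but already at $j=2$ (not only $j\ge 3$) the first term lives at level $2k+3$ and depends on $\beta_1$. For a suitable value of $\beta_1$ the sum vanishes, and nothing in your argument rules this out. Neither of your proposed fixes is made concrete, and there is no evident way to control the $\beta_i$ using only the Jacobi identity $J(e_q,e_{q+1},\cdot)$ that underlies Lemma~\ref{lemma3}.

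The paper avoids the $\beta$-dependence by changing the Jacobi identity: instead of $J(e_q,e_{q+1},e_{2k+s-q})$ it uses $J(e_{q+s-1},e_{q+s},e_{2k+2-q-s})$. Since $k>2q$ one has $\lambda_{q+s-1,q+s}=0$, and the two remaining known coefficients $\lambda_{q+s,\,2k+2-q-s}$ and $\lambda_{q+s-1,\,2k+2-q-s}$ sit at levels $2k+2$ and $2k+1$ --- precisely the two parameter-free levels --- where they equal $(-1)^{q+s-k}(k+1-q-s)$ and $(-1)^{q+s-1-k}$. This yields $(k+1-q-s)\lambda_{q+s-1,2k+2}+\lambda_{q+s,2k+1}=0$ with $k+1-q-s\neq 1$, and combined with $\lambda_{q+s-1,2k+2}+\lambda_{q+s,2k+1}=\lambda_{q+s-1,2k+1}$ from \eqref{LR} (the right side vanishing by induction) forces $\lambda_{q+s-1,2k+2}=\lambda_{q+s,2k+1}=0$. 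The paper then runs your stage-2 Leibnitz cascade \emph{inside} the induction to descend to $\lambda_{q,2k+s+1}=0$; in effect it proves the whole triangle of vanishings simultaneously rather than separating your two stages, and this strengthened hypothesis is exactly what makes the induction close.
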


\begin{proof}
By the previous lemma we have that $\lambda_{q, 2k+1} =
\lambda_{q, 2k+2} = 0$ since $k\ne q+1$.

Since $k>2q$ we have that
\[ \lambda_{q, q+1} = \lambda_{q+1, q+2} = \ldots = \lambda_{2q-1,
2q} = 0.\] Indeed, since $i+(i+1)= 2i+1 <2k$ where $i=q,\ldots,
2q-1$ all products $[e_{i}, e_{i+1}] = \lambda_{i, i+1} e_{2i+1}$
in $\mathrm{g}$ must be the same as in $\mathrm{m}^q_0(2k)$.
Therefore, $[e_{i}, e_{i+1}] = \lambda_{i, i+1} e_{2i+1}=0$ where
$i=q,\ldots, 2q-1$.

Let us now show that $\lambda_{q+2, 2k+1}=\lambda_{q+1, 2k+2}=
\lambda_{q, 2k+3}=0$. Indeed, $J(e_{q+1}, e_{q+2}, e_{2k-q})=0$.

Since $\lambda_{q+1, q+2} = 0$ we have that

\[ \lambda_{q+2, 2k-q} \lambda_{2k+2, q+1} + \lambda_{2k-q, q+1}
\lambda_{2k+1, q+2} = 0,\]
\par\medskip
\par\medskip
\noindent where $\lambda_{q+2, 2k-q} = (-1)^{q+2-k} (k-1-q)$ and
$\lambda_{q+1, 2k-q} = (-1)^{q+1-k}.$  Hence,
\[ (k-1-q) \lambda_{q+1, 2k+2} + \lambda_{q+2, 2k+1} = 0.\]

Using relation (\ref{LR}), we have
\[ \lambda_{q+1, 2k+2} + \lambda_{q+2, 2k+1} = \lambda_{q+1,
2k+1}\] where $\lambda_{q+1, 2k+1}= \lambda_{q, 2k+1} -
\lambda_{q, 2k+2} = 0$ (by (\ref{LR}) ).

Since $k > 2q$, $k-1-q\ne 1$ we have that
\[ \lambda_{q+1, 2k+2} = \lambda_{q+2, 2k+1} =0. \]
Finally, $\lambda_{q+1, 2k+2} = \lambda_{q, 2k+2} - \lambda_{q,
2k+3}$ (by (\ref{LR}) ).  Hence, $\lambda_{q, 2k+3} = 0.$

Let us now use induction on $s$. Assume that
\[ \lambda_{q, 2k+3} =\ldots= \lambda_{q, 2k+s}=0 \] and
\[ \lambda_{q+s-1, 2k+1} = \lambda_{q+s-2, 2k+2} = \ldots =
\lambda_{q+1, 2k+s-1}=0\] where $2\le s < q.$

We know that   \[q+s-1 < q+s < 2k+2 -q -s.\] Besides,
\[(q+s-1) + (q+s) = 2q + 2s -1 \le 4q - 1 <2k. \] Thus,
$\lambda_{q+s-1, q+s} = 0.$  Therefore,  $J(e_{q+s-1}, e_{q+s},
e_{2k+2-q-s})=0$ is equivalent to
\[ \lambda_{q+s, 2k+2-q-s} \lambda_{2k+2, q+s-1} + \lambda_{2k+2-q-s, q+s-1}
\lambda_{2k+1, q+s} = 0,\] Since $\lambda_{q+s, 2k+2-q-s} =
(-1)^{q+s-k} (k+1-q-s)$ and $\lambda_{q+s-1, 2k+2-q-s} =
(-1)^{q+s-1-k}$ we have that
\[ (k+1-q-s) \lambda_{q+s-1, 2k+2} + \lambda_{q+s, 2k+1} = 0.\]
By relation (\ref{LR})

\[ \lambda_{q+s-1, 2k+2} + \lambda_{q+s, 2k+1} = \lambda_{q+s-1,
2k+1}\] where $\lambda_{q+s-1, 2k+1} = 0$ by inductive assumption.
Since $k> 2q,$ we have that $k+1-q-s\ne 1$ and $\lambda_{q+s-1,
2k+2} = \lambda_{q+s, 2k+1} = 0.$ Next $\lambda_{q+s-1, 2k+2} =
\lambda_{q+s-2, 2k+2}  - \lambda_{q+s-2, 2k+3}$. Also,
$\lambda_{q+s-2, 2k+2}=0$ by inductive assumption. Thus,
$\lambda_{q+s-2, 2k+3} = 0$.

Likewise, $\lambda_{q+s-2, 2k+3} = \lambda_{q+s-3, 2k+3} -
\lambda_{q+s-3, 2k+4}.$ Thus, $\lambda_{q+s-3, 2k+4} = 0$. After
the finite number of steps we get the following
\[ \lambda_{q+1, 2k+s} = \lambda_{q, 2k+s} - \lambda_{q,
2k+s+1}.\] Since $\lambda_{q+1, 2k+s} = \lambda_{q, 2k+s}=0$ we
have that $\lambda_{q, 2k+s+1}=0$, as required. Therefore,
\[ \lambda_{q+s, 2k+1} = \lambda_{q+s-1, 2k+2}=\ldots=\lambda_{q,
2k+s+1}=0.\]

Finally, for $s=q-1$ we obtain
\[ \lambda_{2q-1, 2k+1} = \lambda_{2q-2, 2k+2}= \ldots
=\lambda_{q, 2k+q} = 0,\] as required. The proof is complete.

\end{proof}

\begin{Lemma}\label{nocentralextensions}
Let $k> 2q$. Then $\mathrm{g}=\mathrm{m}^q_{0, 2q-1}(2k+2q-1;
\bar\beta)$ where $\bar\beta =(\beta_1,\ldots, \beta_{q-1})$  has
no one-dimensional $\mathbb{N}$-graded filiform central
extensions.
\end{Lemma}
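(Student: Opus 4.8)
### Proof plan for Lemma \ref{nocentralextensions}

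The plan is to show that any one-dimensional $\mathbb{N}$-graded filiform central extension $\mathrm{g}$ of $\mathrm{m}^q_{0,2q-1}(2k+2q-1;\bar\beta)$ would force a contradiction through the Jacobi identities. First I would recall that, by Lemma \ref{generalext} (applied with $s = 2q-1$), such an extension must be of the form $\mathrm{m}^q_{0,2q}(2k+2q;\bar\beta')$ for some $\bar\beta'$, and in particular it would be a Lie algebra; so it suffices to show that no such Lie algebra of type $\mathrm{m}^q_{0,2q}(2k+2q)$ exists when $k>2q$. The key observation is that Lemma \ref{q-conditions} applies precisely in this regime ($k>2q$) and yields
\[
\lambda_{2q-1,2k+1} = \lambda_{2q-2,2k+2} = \cdots = \lambda_{q,2k+q} = 0.
\]
On the other hand, these structure constants are \emph{not} free parameters: they are prescribed by Definition \ref{MainConstruction} in terms of binomial coefficients and the multiparameters $\beta_i$. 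The strategy is to confront the vanishing forced by Lemma \ref{q-conditions} with the explicit formulas, and show the two are incompatible.

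Concretely, the next step is to extract from Definition \ref{MainConstruction} the value of, say, $\lambda_{2q-1,2k+1}$ — i.e. $[e_{2q-1}, e_{(2k+2q)+1-(2q-1)}] = [e_{2q-1}, e_{2k+2}]$ at the appropriate stage $s$ where $r = 2q-1$ lies in the admissible range $q \le r \le k + [\tfrac{s+1}{2}]$ — and observe that its leading binomial term $(-1)^{k-r}\binom{k-r+s}{k-r}$ with $r=2q-1$ is a strictly positive integer, while the correction terms $\sum (-1)^i \binom{k-r+s-i}{k-r+i}\beta_i$ involve the $\beta_i$. The point is that the chain of equations forced by Lemma \ref{q-conditions}, combined with the recursive relation \eqref{eq55} (equivalently the Leibnitz relation \eqref{LR}) that links $\lambda_{r,*}$ at consecutive stages, over-determines the $\beta_i$: one gets a system of linear equations in $\beta_1,\ldots,\beta_{q-1}$ which is inconsistent because the constant (binomial) terms do not cancel. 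I would carry this out by descending induction or by directly evaluating the relevant $\lambda$'s at $r=q$, where $\binom{k-q+s}{k-q}\neq 0$ gives the cleanest contradiction with $\lambda_{q,2k+q}=0$.

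The main obstacle I anticipate is bookkeeping: one must carefully track at which stage $s$ of the construction each structure constant $\lambda_{r,2k+2q-r}$ first appears, make sure $r$ stays within the range $q \le r \le k+[\tfrac{s+1}{2}]$ so that the formulas \eqref{mainformulas1}--\eqref{mainformulas2} actually apply, and correctly propagate the relations \eqref{LR} across the "diagonal" $i+j = 2k+2q$. A secondary subtlety is that $\mathrm{m}^q_{0,2q-1}(2k+2q-1;\bar\beta)$ is assumed to be a Lie algebra to begin with (Proposition \ref{marinaprop} only guarantees this for $s \le q$, and here $s = 2q-1 > q$), so the statement is conditional; I would phrase the conclusion accordingly — under that hypothesis, Lemma \ref{q-conditions} gives the vanishing, the explicit formulas give a nonzero value, and the contradiction shows no filiform central extension can exist. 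Once the incompatibility of "$\lambda_{q,2k+q}=0$" with the binomial formula is exhibited, the proof is complete.
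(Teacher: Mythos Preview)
Your overall structure matches the paper's: assume an extension exists, note by Lemma~\ref{generalext} it is of type $\mathrm{m}^q_{0,2q}(2k+2q;\bar\beta)$, apply Lemma~\ref{q-conditions} (which needs $k>2q$) to force
\[
\lambda_{q,2k+q}=\lambda_{q+1,2k+q-1}=\cdots=\lambda_{2q-1,2k+1}=0,
\]
and then confront these with the explicit formulas from Definition~\ref{MainConstruction}. So far, so good.

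The gap is in your endgame. You write that you would ``directly evaluate the relevant $\lambda$'s at $r=q$, where $\binom{k-q+s}{k-q}\neq 0$ gives the cleanest contradiction with $\lambda_{q,2k+q}=0$.'' But a single equation such as
\[
\binom{k+q-1}{k-q} + \sum_{i=1}^{q-1}(-1)^i\binom{k+q-1-i}{k-q+i}\beta_i = 0
\]
is one linear constraint on the $q-1$ free parameters $\beta_1,\dots,\beta_{q-1}$; the nonvanishing of the leading binomial gives no contradiction whatsoever. Likewise, the phrase ``the constant (binomial) terms do not cancel'' is precisely the assertion that needs proof, not an observation one can invoke. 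What is actually required is to show that the \emph{full} system of $q$ linear equations in the $q-1$ unknowns $\beta_i$ is inconsistent, i.e.\ that the associated $q\times q$ augmented matrix of binomial coefficients has full rank.

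This is the heart of the argument and it is not routine. The paper handles it by writing out that $q\times q$ matrix explicitly, dividing each row by its first entry, and then recognizing the resulting matrix as a generalized Vandermonde matrix in the variables $y_j = x_j(x_j+2)$ with $x_j = k-1-j$; the nonsingularity then follows from a determinant identity quoted from \cite{shalev:97}. Your proposal contains no hint of this step or of any alternative mechanism for proving the inconsistency, so as written it would stall exactly at the point where the real work begins.
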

\begin{proof}
Assume that such a central extension exists. Then it must be of
type $\mathrm{m}^q_{0, 2q} (2k+2q; \bar\beta)$ and
\[ [e_r, e_{2k+2q-r}] = (-1)^{k-r} \left( \binom{k-r+2q-1}{k-r} +
\sum^{q-1}_{i=1} (-1)^i \binom{k-r+2q-1-i}{k-r+i} \beta_i \right)
e_{2k+2q} \] where $r=q, \ldots, k+q$ and the remaining products
are the same as in $\mathrm{m}^q_{0, 2q-1}(2k+2q-1; \bar\beta)$.
Since, $k>2q$ we can apply Lemma \ref{q-conditions}. Hence, we
have that
\[\lambda_{q, 2k+q} = \lambda_{q+1, 2k+q-1}=\ldots=\lambda_{2q-2,
2k+2}=\lambda_{2q-1, 2k+1}=0. \] Equivalently, we have $q$ linear
equations:
\[ \binom{k+q-1}{k-q} + \sum^{q-1}_{i=1} (-1)^i
\binom{k+q-1-i}{k-q+i} \beta_i =0 \]
\[ \binom{k+q-2}{k-q-1} + \sum^{q-1}_{i=1} (-1)^i
\binom{k+q-2-i}{k-q-1+i} \beta_i =0 \]
\[\ldots\]
\[\binom{k}{k-2q+1} + \sum^{q-1}_{i=1} (-1)^i
\binom{k-i}{k-2q+1+i} \beta_i =0 \]

Consider the following matrix:
\par\medskip
\[ A= \left( \begin{array}{ccccc}
\binom{k}{k-1} & \binom{k+1}{k-2}& \ldots & \binom{k+q-2}{k-q+1} &
\binom{k+q-1}{k-q} \\
\binom{k-1}{k-2} & \binom{k}{k-3}& \ldots & \binom{k+q-3}{k-q} &
\binom{k+q-2}{k-q-1} \\
\ldots& \ldots&\ldots&\ldots& \ldots \\
\binom{k-q+1}{k-q} & \binom{k-q+2}{k-q-1}& \ldots &
\binom{k-1}{k-2q+2} &\binom{k}{k-2q+1}
\end{array}\right)\]
\par\medskip
Dividing each row of $A$ by its first entry (which is, of course,
nonzero) we obtain the following matrix:
\par\medskip
\[ B= \left( \begin{array}{ccccc}
1 &  f_1(x_0) & f_2(x_0) & \ldots & f_{q-1}(x_0)\\
1 &  f_1(x_1) & f_2(x_1) & \ldots & f_{q-1}(x_1)\\
\ldots & \ldots& \ldots & \ldots &\ldots \\
1 &  f_1(x_{q-1})& f_2(x_{q-1})&\ldots &
f_{q-1}(x_{q-1})\end{array} \right)
\] where
\[ f_1(x)= x(x+2),\,
f_2(x)=(x-1)x(x+2)(x+3),\ldots,f_{q-1}(x)=(x-q+2)\cdots
x(x+2)\cdots (x+q)\] and
\[ x_0 = k-1, \, x_1=k-2,\ldots, x_{q-1}= k-q.\]
We next want to prove that $B$ is a nonsingular matrix. Let
\[ y=f_1(x),\,g_2(y)= y(y-3),\ldots, g_{q-1}(y)=y(y-3)\cdots(y+2q-q^2).\] Then
\[ B= \left( \begin{array}{ccccc}
1 &  y_0 & g_2(y_0) & \ldots & g_{q-1}(y_0)\\
1 &  y_1 & g_2(y_1) & \ldots & g_{q-1}(y_1)\\
\ldots & \ldots& \ldots & \ldots &\ldots \\
1 &  y_{q-1}& g_2(y_{q-1})&\ldots & g_{q-1}(y_{q-1})\end{array}
\right)
\] where
\[y_0=x^2_0+2x_0,\,
y_1=x^2_1+2x_1,\ldots,y_{q-1}=x^2_{q-1}+2x_{q-1}.\] Note that
$\text{deg}\, g_i = i,$ $i=2,\ldots, q-1$. According to the
statement on p 319 (see \cite{shalev:97}) $B$ is a nonsingular
matrix. Therefore, $A$ is also non-singular. This implies that the
above system of linear equations is \emph{inconsistent}. Hence,
$\mathrm{m}^q_{0, 2q}(2k+2q)$ is not a Lie algebra. This implies
that $\mathrm{g}$ has no one-dimensional $\mathbb{N}$-graded
filiform central extensions. The proof is complete.

\end{proof}

Let us now finish the proof of Theorem \ref{MainTheorem}. Consider
$\mathrm{g} = \bigoplus^{\infty}_{i=1,q} \mathrm{g}_i$ generated
by both $\mathrm{g}_1$ and $\mathrm{g}_q$ that satisfies
\begin{equation}\label{maincondition}
 [\mathrm{g}_q, \mathrm{g}_{q+1}] = [\mathrm{g}_{q+1},
\mathrm{g}_{q+2}]=\ldots=[\mathrm{g}_{2q}, \mathrm{g}_{2q+1}]=0
\end{equation}
By Corollary \ref{corollary1}, we can choose a basis for
$\mathrm{g}:$ $\{e_1, e_q, e_{q+1},\ldots\}$ where $[e_1, e_i]
=e_{i+1}$, and $\mathrm{g}_i = \text{span}\{e_i\}$, $i>1$. Hence,
condition (\ref{maincondition}) is equivalent to

\begin{equation}
\lambda_{q, q+1}= \lambda_{q+1, q+2} = \ldots = \lambda_{2q,
2q+1}=0
\end{equation}
where $\lambda_{ij}$ are corresponding structure constants. Set
$r=2q$. By Corollary \ref{techlemma1} $\mathrm{g}(r)$ is
isomorphic to $\mathrm{m}^q_0(r)$.    By Lemma
\ref{onedimextensions} and condition (\ref{maincondition}) we have
that
\begin{align*}
& \mathrm{g}(r+1) \cong \mathrm{m}^q_0(r+1),\\
& \mathrm{g}(r+2) \cong \mathrm{m}^q_0(r+2),\\
& \ldots\\
& \mathrm{g}(r+2q+2)\cong \mathrm{m}^q_0(r+2q+2).
\end{align*}
Note that $\mathrm{m}^q_0(r+2q+2) = \mathrm{m}^q_0(4q+2) =
\mathrm{m}^q_0(2(2q+1)) =\mathrm{m}^q_0(2k)$ where $k=2q+1$. Thus,
$\mathrm{g}$ is obtained by taking one-dimensional central
extensions of $\mathrm{m}^q_0(2k)$ where $k > 2q$. If $\mathrm{g}$
is not of type $\mathrm{m}^q_0$, then by taking one-dimensional
central extensions at some point we obtain
$\mathrm{m}^q_{0,1}(2l+1)$ where $l>2q$. Then  the  $(2q-1)$st
filiform central extension will be of type
$\mathrm{m}^q_{0,2q-1}(2l+2q-1)$, $l>2q,$ and by Lemma
\ref{nocentralextensions} it has no required central extensions, a
contradiction. Therefore, $\mathrm{g}$ must be of type
$\mathrm{m}^q_0$. The proof is complete.
\par\medskip
\par\medskip
\subsection{The case of $q=3$}

The purpose of this section is to prove the following theorem:

\begin{Theorem}\label{caseq3}
Let  $\mathrm{g}= \bigoplus_{i\in \mathbb{N}}\mathrm{g}_i$ be an
infinite-dimensional $\mathbb{N}$-graded  Lie algebra of maximal
class and suppose $\mathrm{g} = \langle \mathrm{g}_1, \mathrm{g}_3
\rangle $. Then one of the following holds

(1) $\mathrm{g}\cong \mathrm{m}^3_0$;

(2) $\mathrm{g}\cong \mathrm{m}_3$;

(3) $\mathrm{g}\cong W^3$.
\end{Theorem}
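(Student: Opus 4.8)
The plan is to combine Theorem~\ref{MainTheorem} with the central extension results of subsections 3.1 and 3.2, reducing the classification to a finite case analysis.

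First I would fix coordinates. Since $\mathrm{g}$ is infinite-dimensional and of maximal class with $\mathrm{g}=\langle\mathrm{g}_1,\mathrm{g}_3\rangle$, one has $\mathrm{g}_4=[\mathrm{g}_1,\mathrm{g}_3]\neq\{0\}$; we may further assume $\mathrm{g}_5\neq\{0\}$, so that Corollary~\ref{corollary1} applies (the case $\mathrm{g}_5=\{0\}$, which forces extra gaps in the grading, is disposed of directly and yields case~(1)). Thus there is a canonical basis $e_1,e_3,e_4,\dots$ with $[e_1,e_i]=e_{i+1}$ for $i>1$ and $[e_i,e_j]=\lambda_{ij}e_{i+j}$ for $i,j>1$, where $\lambda_{ji}=-\lambda_{ij}$, $\lambda_{ii}=0$, and \eqref{LR} holds. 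If $\lambda_{3,4}=\lambda_{4,5}=\lambda_{5,6}=\lambda_{6,7}=0$, then \eqref{maincondition} holds for $q=3$ and Theorem~\ref{MainTheorem} gives $\mathrm{g}\cong\mathrm{m}^3_0$, which is case~(1). So from now on assume at least one of these four structure constants is nonzero.

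Next I would work with the tower of finite quotients $\mathrm{g}(n)=\mathrm{g}/\bigoplus_{i>n}\mathrm{g}_i$: each $\mathrm{g}(n)$ is a finite-dimensional $\mathbb{N}$-graded filiform Lie algebra, and $\mathrm{g}(n+1)$ is a one-dimensional $\mathbb{N}$-graded filiform central extension of $\mathrm{g}(n)$. By Corollary~\ref{techlemma1}, $\mathrm{g}(n)\cong\mathrm{m}^3_0(n)$ for $n\le 6$. By Lemma~\ref{onedimextensions}, central extensions of $\mathrm{m}^3_0(2k+1)$ remain in the $\mathrm{m}^3_0$-family, while those of $\mathrm{m}^3_0(2k)$ are $\mathrm{m}^3_0(2k+1)$ or $\mathrm{m}^3_{0,1}(2k+1)$; since $\mathrm{g}\not\cong\mathrm{m}^3_0$, there is a smallest even $2k$, with $k\ge 3$, for which $\mathrm{g}(2k+1)\cong\mathrm{m}^3_{0,1}(2k+1)$. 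Iterating Lemma~\ref{2dextension}, Proposition~\ref{proposition} and Lemma~\ref{generalext} then forces $\mathrm{g}(2k+s)\cong\mathrm{m}^3_{0,s}(2k+s;\bar\beta)$ for every $s\ge 1$ and suitable $\bar\beta$ depending on $s$. If $k>2q=6$, then $\mathrm{g}(2k+5)\cong\mathrm{m}^3_{0,5}(2k+5;\bar\beta)$, and since $\mathrm{g}(2k+6)$ is a one-dimensional $\mathbb{N}$-graded filiform central extension of it, Lemmas~\ref{q-conditions} and \ref{nocentralextensions} produce a contradiction, exactly as at the end of the proof of Theorem~\ref{MainTheorem}. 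Hence $k\in\{3,4,5,6\}$, and it remains to settle these four cases. In each one I would continue building the tower $\mathrm{m}^3_0(2k)\to\mathrm{m}^3_{0,1}(2k+1)\to\mathrm{m}^3_{0,2}(2k+2)\to\mathrm{m}^3_{0,3}(2k+3;\beta_1)\to\cdots$; the requirement that every stage be a Lie algebra, equivalently that the Jacobi identities $J(e_i,e_j,e_r)=0$ hold, is what pins down the admissible parameters, in the spirit of Lemmas~\ref{lemma3} and \ref{lemma2}, and turns into a system of polynomial equations in the accumulating $\beta_i$. For $k=3$ (where $k=q$, so Lemmas~\ref{lemma2} and \ref{lemma3} do not apply and fresh Jacobi computations are needed) this system has exactly two one-parameter families of solutions, the parameter corresponding to the overall rescaling of $e_3,e_4,\dots$; their multiplication tables are, after rescaling, those of $\mathrm{m}_3$ (the ``constant cocycle'' solution, with $\lambda_{3,i}$ independent of $i$) and of $W^3$ (the solution $[e_i,e_j]=(j-i)e_{i+j}$). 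For $k=4,5,6$ one shows that the system becomes inconsistent at a finite stage, so no infinite-dimensional $\mathrm{g}$ arises. Together with the first paragraph, this gives cases (1), (2), (3).

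The main obstacle is this last step. The four values $k\in\{3,4,5,6\}$ do not satisfy the hypothesis $k>2q$ of Lemma~\ref{nocentralextensions}, so each needs a separate argument; and the delicate case is $k=3$, where the first non-trivial Jacobi identity is quadratic in the parameters and is exactly what produces the two-way split between the $\mathrm{m}_3$-branch and the $W^3$-branch. Showing that no further branch survives, and that the two surviving tables are precisely $\mathrm{m}_3$ and $W^3$, is where the bulk of the work lies; these are the routine but lengthy computations that match the computer calculations of Vaughan-Lee mentioned in the introduction.
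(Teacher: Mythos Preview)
Your outline is the paper's own argument: reduce to the tower of quotients $\mathrm{g}(n)$, use Lemma~\ref{nocentralextensions} (equivalently your invocation of Theorem~\ref{MainTheorem}) to dispose of $k>6$, show by explicit Jacobi computations that $k=4,5,6$ become inconsistent at a finite stage (the paper's Lemmas~\ref{kequals4}--\ref{kequals6}), and analyze $k=3$ separately.

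One point deserves sharpening. For $k=3$ the paper does not argue in one stroke that the full infinite system of Jacobi constraints has ``exactly two'' solutions. It first expresses $\beta_2,\beta_3,\beta_4$ in terms of $\beta_1$ from the Jacobi identities in $\mathrm{m}^3_{0,10}(16)$, obtains a degree-$10$ equation in $\beta_1$ with five roots $0,\ \tfrac17,\ \tfrac35,\ \tfrac{-6\pm3\sqrt{11}}7$, eliminates the last three by checking $J(e_3,e_5,e_8)$, and then identifies $\mathrm{g}(15)$ with $\mathrm{m}_3(15)$ (for $\beta_1=0$) or $W^3(15)$ (for $\beta_1=\tfrac17$). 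The passage from this finite identification to the infinite-dimensional conclusion is \emph{not} automatic: it requires the separate uniqueness-of-extension results, namely Remark~\ref{Mremark} for the $\mathrm{m}_3$-branch and Lemma~\ref{Wextensions}/Remark~\ref{Wremark} for the $W^3$-branch, which your sketch does not mention. Also, your phrase ``two one-parameter families, the parameter corresponding to the overall rescaling'' is a slip: that rescaling freedom is already absorbed in the normalization of Lemma~\ref{onedimextensions}, so after fixing the canonical basis one obtains two \emph{discrete} admissible sequences $\bar\beta$, not two families.
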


The proof of this theorem will follow from the series of lemmas
below.

Let $\mathrm{g}$ be an $\mathbb{N}$-graded Lie algebra of maximal
class generated by both $\mathrm{g}_1$ and $\mathrm{g}_3$ (not
generated by $\mathrm{g}_1$ or $\mathrm{g}_3$ only). Hence, it has
the following $\mathbb{N}$-grading: $\mathrm{g}=
\bigoplus^{\infty}_{i=1,3} \mathrm{g}_i$. At the beginning of
section 3 we introduced Lie  algebras of types $\mathrm{m}^q_0$,
$\mathrm{m}_q$ and $W^q$. For $q=3$ we will show that these are
the only $\mathbb{N}$-graded Lie algebras of maximal class
generated by $\mathrm{g}_1$ and $\mathrm{g}_3$.

Recall that, by definition,  $\mathrm{m}_3(l)=\text{span}\{e_1,
e_3, \ldots, e_l\}$ such that $[e_1, e_i] = e_{1+i}$, $i=3,\ldots,
l-1$ and $[e_3, e_j]=e_{3+j}$, $j=4,\ldots, l-3.$ We can assume
that $l\ge 7$ and write $l= 6+n$ where $n\ge 1$.

\begin{Lemma}
Let $\mathrm{g}$ be a filiform Lie algebra of type
$\mathrm{m}_3(6+n)$ where $n\ge 1$. Then $\mathrm{g}$ is
isomorphic to $\mathrm{m}^3_{0,n}(6+n; \bar\beta)$ ($n$th
filiform central extension of $\mathrm{m}^3_0(6)$ ) where
$\bar\beta = (0,\ldots, 0)$.
\end{Lemma}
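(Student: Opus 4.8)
The plan is to show that $\mathrm{m}_3(6+n)$ and $\mathrm{m}^3_{0,n}(6+n;(0,\dots,0))$ carry \emph{the same} multiplication table on their standard bases $e_1,e_3,e_4,\dots,e_{6+n}$, so that the sought isomorphism is just the identity on basis vectors. First I would record that $\mathrm{m}_3(6+n)$ — whose structure relations are written out explicitly above and which is a quotient of the example Lie algebra $\mathrm{m}_q$ with $q=3$, hence genuinely a Lie algebra — is filiform, is $\mathbb{N}$-graded with one-dimensional components $\mathrm{g}_1=\text{span}\{e_1\}$ and $\mathrm{g}_i=\text{span}\{e_i\}$ for $i\ge3$, is generated by $\mathrm{g}_1$ and $\mathrm{g}_3$, and (since $n\ge1$) has $\mathrm{g}_5\ne\{0\}$. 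Its only nonzero brackets among basis elements are $[e_1,e_i]=e_{i+1}$ for $i=3,\dots,5+n$ and $[e_3,e_j]=e_{3+j}$ for $j=4,\dots,n+3$.

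Next I would unwind Definition \ref{MainConstruction} for the base $\mathrm{m}^3_0(6)$: here $2k=6$, so $k=3$, and also $q=3$, hence $q=k$ — this coincidence is exactly what makes the computation collapse. I would prove, by induction on $s\ge0$, that on the common basis $\mathrm{m}^3_{0,s}(6+s;(0,\dots,0))$ equals $\mathrm{m}_3(6+s)$. The cases $s\le3$ are immediate from $\mathrm{m}^3_0(6)=\mathrm{m}_3(6)$, from Lemma \ref{onedimextensions}, from Lemma \ref{2dextension}, and from the definition of $\mathrm{m}^3_{0,3}(9;\beta_1)$ with $\beta_1=0$. For the inductive step $s\to s+1$ (with $s\ge3$), consider the coefficient $(-1)^{k-r}\bigl(\binom{k-r+s}{k-r}+\sum_i(-1)^i\binom{k-r+s-i}{k-r+i}\beta_i\bigr)$ appearing in \eqref{mainformulas1}--\eqref{mainformulas2}: every term other than $\binom{k-r+s}{k-r}$ carries a factor $\beta_i$ and so disappears once $\bar\beta=(0,\dots,0)$, and since $r$ runs over $q,\dots,k+\left[\frac{s+1}{2}\right]$ with $q=k=3$ we always have $r\ge k$, so $(-1)^{k-r}\binom{k-r+s}{k-r}$ equals $1$ for $r=k=3$ and $0$ for $r>3$ (using $\binom{m}{j}=0$ for $j<0$). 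Combined with $[e_1,e_{6+s}]=e_{6+s+1}$, this shows that the only relations added in going from $\mathrm{m}^3_{0,s}$ to $\mathrm{m}^3_{0,s+1}$ are $[e_1,e_{6+s}]=e_{6+s+1}$ and $[e_3,e_{s+4}]=e_{s+7}$, which is precisely how $\mathrm{m}_3(6+s+1)$ enlarges $\mathrm{m}_3(6+s)$. (In particular $\mathrm{m}^3_{0,s}(6+s;(0,\dots,0))$, being equal to the Lie algebra $\mathrm{m}_3(6+s)$, is a Lie algebra, so Definition \ref{MainConstruction} applies coherently at each stage, even for $s\ge q$.)

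Specializing the induction to $s=n$ finishes it: $\mathrm{m}^3_{0,n}(6+n;(0,\dots,0))$ has exactly the nonzero brackets $[e_1,e_i]=e_{i+1}$ ($i=3,\dots,5+n$) and $[e_3,e_j]=e_{3+j}$ ($j=4,\dots,n+3$), that is, the multiplication table of $\mathrm{m}_3(6+n)$; hence the identity on basis vectors is a graded Lie algebra isomorphism $\mathrm{m}_3(6+n)\isom\mathrm{m}^3_{0,n}(6+n;(0,\dots,0))$. (A more conceptual variant: $\mathrm{m}_3(6+n)$ is a one-dimensional central extension of $\mathrm{m}_3(6+n-1)$, so by induction and Lemma \ref{generalext} it must be of type $\mathrm{m}^3_{0,n}(6+n;\bar\beta')$, and one then pins $\bar\beta'$ down to $(0,\dots,0)$ using, e.g., $[e_4,e_5]=0$; the table comparison above is shorter and self-contained.)

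The only real obstacle is the bookkeeping with binomial coefficients having negative lower entries and with index ranges: one must check that throughout $r=q,\dots,k+\left[\frac{s+1}{2}\right]$ the complementary index $2k+s+1-r$ stays $\ge q=3$ (so that no nonexistent $e_2$ is ever invoked), and that in \eqref{mainformulas1}--\eqref{mainformulas2} every summand besides $\binom{k-r+s}{k-r}$ is genuinely proportional to some $\beta_i$. Granting this, the proof is a routine comparison of multiplication tables, using nothing beyond Definition \ref{MainConstruction} together with Lemmas \ref{onedimextensions} and \ref{2dextension} as low-degree checks.
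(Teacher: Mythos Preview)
Your proposal is correct and follows essentially the same approach as the paper: both arguments compare the multiplication tables directly, observing that with $k=q=3$ and $\bar\beta=(0,\dots,0)$ the coefficient $(-1)^{k-r}\binom{k-r+s}{k-r}$ equals $1$ for $r=3$ and $0$ for $r>3$, which recovers exactly the relations of $\mathrm{m}_3(6+n)$. Your version is somewhat more detailed (inducting on $s$ and checking index ranges explicitly), but the core computation is identical to the paper's.
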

\begin{proof}
Let us compare multiplication tables of both algebras. First,
$\mathrm{m}^3_{0, n}(6+n;\bar\beta) = \text{span}\{e_1, e_3,
\ldots, e_{6+n}\}$ where $\bar\beta=(0,\ldots,0)$ has the
following multiplication table: $[e_r, e_{6+l-r}] = (-1)^{3-r}
\binom{3-r+l-1}{3-r} e_{6+l}$, $r=3,\ldots, 3+l/2$,
$l=1,\ldots,n$. If $r=3$,  then the above binomial coefficient is
1. If $r>3$, then it is zero.  Therefore, the multiplication table
of $\mathrm{m}^3_{0, n}(6+n;\bar\beta)$ for $\bar\beta =(0,\ldots,
0)$ is exactly the same as that of $\mathrm{m}_3(6+n)$. Hence,
they are isomorphic.
\end{proof}

\begin{Lemma}
If $n=2l+1$, then  $\mathrm{m}_3(6+2l+1)$ has a unique
one-dimensional $\mathbb{N}$-graded filiform central extension
which is $\mathrm{m}_3(6+2l+2)$. If $n=2l$, then
$\mathrm{m}_3(6+2l)$ has a one-parameter family of
$\mathbb{N}$-graded filiform central extensions of type
$\mathrm{m}^3_{0,2l+1}(6+2l+1; \bar\beta')$ where
$\bar\beta'=(0,\ldots,0,\beta_l)$, $\beta_l$ is a scalar.

\end{Lemma}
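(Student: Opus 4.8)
The plan is to route everything through the central‑extension results of subsection 3.2. By the preceding lemma, $\mathrm{m}_3(6+n)\cong\mathrm{m}^3_{0,n}(6+n;\bar\beta)$ with $\bar\beta=(0,\ldots,0)$, so it suffices to describe the one‑dimensional $\mathbb{N}$-graded filiform central extensions of $\mathrm{m}^3_{0,n}(6+n;\bar\beta)$. Since this is a Lie algebra, Lemma \ref{generalext} applies: every such extension is isomorphic to some $\mathrm{m}^3_{0,n+1}(6+n+1;\bar\beta')$, and the proof of that lemma computes $\bar\beta'$ from the linear system \eqref{eq55} coming from the identities $J(e_1,e_r,e_{6+n-r})=0$, splitting into the case $n$ odd (its Case~1) and $n$ even (its Case~2).

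First I would do the odd case $n=2l+1$. Here Case~1 of the proof of Lemma \ref{generalext} shows the system \eqref{eq55} has a \emph{unique} solution and forces $\bar\beta'=\bar\beta=(0,\ldots,0)$; hence the only extension is $\mathrm{m}^3_{0,2l+2}(6+2l+2;(0,\ldots,0))$, which by the preceding lemma (applied with $n+1$ in place of $n$) is $\mathrm{m}_3(6+2l+2)$. Existence is automatic, since $\mathrm{m}_3(6+2l+2)$ has a one‑dimensional central ideal whose quotient is $\mathrm{m}_3(6+2l+1)$ and so is itself a central extension of the required kind; the content of the statement is the uniqueness, which is exactly what Case~1 gives.

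For the even case $n=2l$ I would use Case~2 of the same proof: one new parameter $\beta_l=\lambda_{k+l,k+l+1}$ enters (with $k=3$), and every extension is of the form $\mathrm{m}^3_{0,2l+1}(6+2l+1;\bar\beta')$ with $\bar\beta'=(\bar\beta,\beta_l)=(0,\ldots,0,\beta_l)$; the member $\beta_l=0$ is $\mathrm{m}_3(6+2l+1)$, a genuine central extension. The one point that still needs work is to confirm that for \emph{every} scalar $\beta_l$ the prescribed structure constants do satisfy the Jacobi identity, so that the whole one‑parameter family consists of central extensions. The relations $J(e_1,e_i,e_j)=0$ hold by construction of these constants (they are precisely \eqref{eq55}); for the remaining $J(e_i,e_j,e_r)=0$ with $i,j,r>1$ and $i+j+r=6+2l+1$ I would argue as in the Proposition \ref{proposition} proof: if $i>3$ all three brackets already vanish in $\mathrm{m}_3(6+2l)$ because none of $e_i,e_j,e_r$ equals $e_3$, while for $i=3$ one has $J(e_3,e_j,e_r)=\big(\lambda_{3+j,r}-\lambda_{3+r,j}\big)e_{6+2l+1}$, a difference of two top‑degree structure constants $\lambda_{a,6+2l+1-a}$. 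This difference vanishes because those constants obey the alternating recurrence $\lambda_{r+1,6+2l-r}+\lambda_{r,6+2l+1-r}=\lambda_{r,6+2l-r}$ from $\text{ad}(e_1)$ together with the antisymmetry $\lambda_{a,b}=-\lambda_{b,a}$: indices $j,r$ with $j+r$ fixed, hence of the same parity, receive equal constants. Carrying out this bookkeeping carefully near the middle degree is the main technical obstacle; for $l\le1$ it is already contained in Proposition \ref{proposition}. Granting it, the central extensions of $\mathrm{m}_3(6+2l)$ are exactly the $\mathrm{m}^3_{0,2l+1}(6+2l+1;(0,\ldots,0,\beta_l))$, $\beta_l$ a scalar, which is the asserted family.
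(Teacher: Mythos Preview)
Your approach is essentially the same as the paper's: identify $\mathrm{m}_3(6+n)$ with $\mathrm{m}^3_{0,n}(6+n;(0,\ldots,0))$ via the preceding lemma and then invoke Lemma~\ref{generalext}, splitting into the odd and even cases exactly as you do. The paper's proof stops there and does not carry out the extra Jacobi verification for general $\beta_l$ that you sketch in the even case; your additional check is correct in outline (and harmless), but it is not part of the paper's argument.
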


\begin{proof}

If $n=2l+1$, then $\mathrm{m}_3(6+2l+1)$ is isomorphic to
$\mathrm{m}^3_{0,2l+1}(6+2l+1; \bar\beta)$,
$\bar\beta=(0,\ldots,0)$. By Lemma \ref{generalext}, it has a
unique one-dimensional $\mathbb{N}$-graded filiform central
extension which is $\mathrm{m}^3_{0,2l+2}(6+2l+2; \bar\beta)$,
$\bar\beta=(0,\ldots,0)$ and, therefore, is isomorphic to
$\mathrm{m}_3(6+2l+2)$. If $n=2l$, then $\mathrm{m}_3(6+2l)$ is
isomorphic to $\mathrm{m}^3_{0,2l}(6+2l; \bar\beta)$,
$\bar\beta=(0,\ldots,0)$. By Lemma \ref{generalext}, it has a
one-parameter family of the required central extensions
$\mathrm{m}^3_{0,2l+1}(6+2l+1; \bar\beta')$,
$\bar\beta'=(0,\ldots,0, \beta_l)$.
\end{proof}

\begin{Lemma}
Let $\mathrm{g} = \mathrm{m}^3_{0,2l+1}(6+2l+1; \bar\beta')$ where
$l\ge 3$, $\bar\beta'=(0,\ldots,0,\beta_l)$, $\beta_l\ne 0$, be a
Lie algebra. Then it has no one-dimensional $\mathbb{N}$-graded
filiform central extensions.
\end{Lemma}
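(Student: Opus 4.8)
The plan is to argue by contradiction, reducing everything to a single Jacobi identity. Suppose $\mathrm{g}=\mathrm{m}^3_{0,2l+1}(6+2l+1;\bar\beta')$, with $\bar\beta'=(0,\dots,0,\beta_l)$ and $\beta_l\neq 0$, $l\ge 3$, does admit a one-dimensional $\mathbb{N}$-graded filiform central extension $\bar{\mathrm g}$. Since $\mathrm g$ is a Lie algebra and $2l+1$ is odd, Lemma~\ref{generalext} forces $\bar{\mathrm g}\cong\mathrm{m}^3_{0,2l+2}(6+2l+2;\bar\beta')$ with the \emph{same} multiparameter; its bracket is then the one written out in Definition~\ref{MainConstruction} (formula~\eqref{mainformulas2} with $k=3$, $s=2l+1$). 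So it is enough to show that, for $\beta_l\neq 0$ and $l\ge 3$, this bracket fails the Jacobi identity, i.e. $\bar{\mathrm g}$ is not a Lie algebra at all.

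The Jacobi identity I would test is $J(e_3,e_4,e_{2l+1})=0$. Here $3+4+(2l+1)=2l+8$ is exactly the top degree of $\bar{\mathrm g}$, and $2l+1\ge 7$ precisely because $l\ge 3$, which is what makes this triple do the job. The bracket $[e_3,e_4]=e_7$ holds in every algebra of type $\mathrm m^3_{0,s}$, $s\ge 1$ (it is present already in $\mathrm m^3_{0,1}(7)$ and inherited thereafter), so $\lambda_{3,4}=1$. All brackets $[e_i,e_j]$ with $i+j<6+2l+2$ are inherited from lower stages; since $\beta_1=\dots=\beta_{l-1}=0$, those lower stages coincide with the corresponding $\mathrm m_3$-truncations (via $\mathrm m^3_{0,n}(6+n;\bar 0)\cong\mathrm m_3(6+n)$), whence $\lambda_{4,2l+1}=0$ and $\lambda_{3,2l+1}=1$. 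The only genuinely new constants entering the identity are the top brackets $\lambda_{4,2l+4}$ and $\lambda_{7,2l+1}$, read off from~\eqref{mainformulas2}: a short computation gives $\lambda_{4,2l+4}=(-1)^{l+1}l\,\beta_l$, and $\lambda_{7,2l+1}=(-1)^{l}(l-3)\beta_l$ when $l\ge 4$, while for $l=3$ one has $e_{2l+1}=e_7$ and $\lambda_{7,7}=0$ by skew-symmetry. Substituting, $J(e_3,e_4,e_{2l+1})$ collapses to $\bigl(\lambda_{7,2l+1}+\lambda_{3,2l+1}\lambda_{4,2l+4}\bigr)e_{2l+8}$, which equals $(-1)^{l+1}3\beta_l\,e_{2l+8}$ for $l\ge 4$ and $3\beta_3\,e_{14}$ for $l=3$; in both cases it is nonzero because $\beta_l\neq 0$. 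This contradiction shows $\mathrm g$ has no such central extension.

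I expect the only real work to be the bookkeeping: for each structure constant occurring in $J(e_3,e_4,e_{2l+1})$ one must decide whether the underlying bracket is new at degree $6+2l+2$, is a top bracket of some intermediate $\mathrm m^3_{0,m}$ (in which case the $\mathrm m_3$-description applies, because the earlier parameters vanish), or lies strictly below the top and is simply inherited; and then check that the degenerate case $l=3$, where the first term of the Jacobi identity vanishes identically, still leaves a nonzero residue. It is also worth recording explicitly why $l\ge 3$ is essential: for $l\le 2$ one has $2l+1<7$, the bracket $[e_7,e_{2l+1}]$ is no longer governed by the top-bracket formula in the same way, and in fact (cf. the earlier discussion of $\m_{10}$ and $\m_{11}$) the algebras $\mathrm m^3_{0,5}(11;(0,\beta_2))$ do admit further filiform central extensions, so the hypothesis $l\ge 3$ is not a technical artifact.
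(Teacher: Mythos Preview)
Your proof is correct and follows essentially the same route as the paper: assume an extension exists, identify it via Lemma~\ref{generalext} as $\mathrm m^3_{0,2l+2}(8+2l;\bar\beta')$, and derive a contradiction from a Jacobi identity $J(e_3,e_j,e_k)$ with $3+j+k=8+2l$. The paper carries this out for an arbitrary pair $3<j<k$ with $j+k=2l+5$ and shows the resulting constraint $\lambda_{k,3+j}=\lambda_{j,k+3}$ forces $j+k=8+2l$, while you specialise to $(j,k)=(4,2l+1)$ and compute the two relevant top constants directly; the numerics agree and both yield the same nonzero obstruction $(-1)^{l+1}3\beta_l$. Your explicit treatment of the boundary case $l=3$ (where $3+j=k=7$ and $\lambda_{7,7}=0$) and the remark on why the hypothesis $l\ge 3$ is sharp are nice additions not spelled out in the paper.
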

\begin{proof}
Assume the contrary, that is, a one-dimensional
$\mathbb{N}$-graded filiform central extension of $\mathrm{g}$
exists. Then by Lemma \ref{generalext} it must be
$\mathrm{m}^3_{0,2l+2}(6+2l+2; \bar\beta')$ where
$\bar\beta'=(0,\ldots,0,\beta_l)$, and
\[  [e_r, e_{6+2l+2-r}] = (-1)^{3-r} \left(  \binom{3-r+2l+1}{3-r} + (-1)^l \binom{3-r+2l+1-l}{3-r+l}\beta_l \right)e_{8+2l}.
\] The corresponding structure constants are $\lambda_{r, 8+2l-r} =
(-1)^{3-r} \left( \binom{3-r+2l+1}{3-r} + (-1)^l (3-r+l+1) \beta_l
\right).$ This means that if $r=3$, then $\lambda_{3, 5+2l} = 1 +
(-1)^l (l+1) \beta_l$. If $r > 3$, then $\lambda_{r, 8+2l-r} =
(-1)^{3-r+l} (3-r+l+1) \beta_l$. Since
$\mathrm{m}^3_{0,2l+2}(8+2l; \bar\beta')$ is a Lie algebra,
$J(e_3, e_j, e_k)=0$ where $3+j+k=8+2l$ and $3<j<k$. In terms of
structure constants this can be re-written as:
\[ \lambda_{3,j}\lambda_{3+j, k} + \lambda_{j,k} \lambda_{5+2l,3}
+ \lambda_{k,3} \lambda_{k+3,j} = 0.\] Note that
\[ 3+j < 3+k < j+k =5+2l < 6+2l. \] Hence, the Lie products $[e_3, e_j]$, $[e_3, e_k]$ and
$[e_j, e_k]$ are the same as in $\mathrm{m}^3_{0, 2l}(6+2l;
\bar\beta)$ where $\bar\beta = (0,\ldots,0)$, which is isomorphic
to $\mathrm{m}_3(6+2l)$. Thus, $\lambda_{3,j} = \lambda_{3, k} =1$
and $\lambda_{j, k} = 0$ since both $j,k >3$. The above equation
takes the form $\lambda_{k, 3+j} = \lambda_{j, k+3}$. Since
$\beta_l\ne 0$, $(-1)^{3-k+l}(3-k+l+1) = (-1)^{3-j+l}(3-j+l+1)$.
Since $j+k= 5+2l$, an odd number, one of $j$, $k$  is even while
the other one is odd. Therefore, $ (3-k+l+1) = -(3-j+l+1)$, $k+j =
8+2l$, a contradiction. The lemma is proved.
\end{proof}

\beR\label{Mremark} It follows from the above lemmas that
$\mathrm{m}_3$ is the only infinite-dimensional Lie algebra that
can be obtained by taking one-dimensional $\mathbb{N}$-graded
filiform central extensions of $\mathrm{m}_3(n)$, $n\ge 12$. \eeR

Next we will be interested in one-dimensional $\mathbb{N}$-graded
filiform central extensions of $W^3(n)$. Recall that $W^q(n)$ is
defined by its basis $\{e_1, e_q,\ldots, e_n\}$ and relations:
$[e_i, e_j]=(j-i)e_{i+j}$ whenever $i+j\le n$, and the products
equal to 0, otherwise. By setting $y_1=e_1$ and $y_i = 60(i-2)!
e_i$ $q\le i\le n$, we have that $[y_1, y_i]= y_{i+1}$,
$i=q,\ldots,n-1,$ and $[y_i, y_j] = \lambda_{i,j} y_{i+j}$,
$i+j\le n$ where
\begin{equation}\label{Wstructureconstants} \lambda_{i,j}= \frac{60(i-2)!
(j-2)!(j-i)}{(i+j-2)!}.
\end{equation}

\begin{Lemma}\label{Wextensions}
A Lie algebra of type $W^3(n)$, $n\ge 14$, has a unique
one-dimensional $\mathbb{N}$-graded filiform central extension
which is isomorphic to $W^3(n+1)$.
\end{Lemma}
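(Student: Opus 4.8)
The plan is to mimic the structure of Lemma \ref{onedimextensions} and Lemma \ref{2dextension}: first identify all candidates for a one-dimensional $\mathbb{N}$-graded filiform central extension of $W^3(n)$ by fixing a canonical basis and using the Jacobi identity to solve for the new structure constants, then show that exactly one solution exists and that it is isomorphic to $W^3(n+1)$. By Corollary \ref{techlemma1} (or rather by Lemma \ref{techlemma2}) any such extension $\mathrm{g}$ has a canonical basis $e_1, e_3, \ldots, e_n, e_{n+1}$ with $[e_1,e_i]=e_{i+1}$ for $i=3,\ldots,n$, and for $i+j\le n$ the products $[e_i,e_j]=\lambda_{ij}e_{i+j}$ coincide with those of $W^3(n)$ given by \eqref{Wstructureconstants}. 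The only undetermined products are $[e_r,e_{n+1-r}]=\lambda_{r,n+1-r}e_{n+1}$ for $r=3,\ldots,\lfloor (n+1)/2\rfloor$.

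First I would write down the Jacobi/Leibniz relation. Applying $\mathrm{ad}(e_1)$ as a derivation (relation \eqref{LR}) gives $\lambda_{r,n+1-r}=\lambda_{r,n-r}-\lambda_{r+1,n-r}$, where the right-hand side is known from $W^3(n)$. This expresses every unknown $\lambda_{r,n+1-r}$ in terms of the known structure constants of $W^3(n)$, once one free parameter (say $\lambda_{r_0,r_0}$ with $r_0$ the middle index, or $\beta := \lambda_{\lfloor n/2\rfloor,\lceil n/2\rceil}$ for $n$ even) is pinned down; and the constraint $\lambda_{m,m}=0$ for $m=(n+1)/2$ when $n$ is odd forces uniqueness there. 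Then I would check directly that the resulting constants are precisely the structure constants $\lambda^{W^3(n+1)}_{r,n+1-r}$ obtained from formula \eqref{Wstructureconstants} with $i+j=n+1$: this is a short identity on binomial/factorial expressions, namely that $\frac{60(i-2)!(j-2)!(j-i)}{(i+j-2)!}$ satisfies the same recursion $\lambda_{i,j}=\lambda_{i,j-1}-\lambda_{i+1,j-1}$ — indeed this recursion holds for the structure constants of $W^3$ at all levels, since $W^3$ is itself a Lie algebra and $\mathrm{ad}(e_1)$ is a derivation on it. Hence the extension exists and equals $W^3(n+1)$; to rule out any other extension one checks that in the even case $\beta$ is also forced: a second Jacobi identity, e.g. $J(e_3,e_4,e_{n-6})=0$ (valid since $n\ge 14$ ensures $n-6\ge 8$ and all indices fall in the already-determined range), pins $\beta$ to the $W^3(n+1)$ value, exactly as Lemma \ref{nocentralextensions} uses extra Jacobi relations to constrain the $\beta_i$.

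The main obstacle I expect is the even case: unlike the odd case, the single parameter $\beta$ is not immediately killed by the relation $\lambda_{m,m}=0$, so one must produce an additional Jacobi identity that fixes it and simultaneously verify consistency — i.e. that no Jacobi identity is violated by the $W^3(n+1)$ product table. The cleanest way to handle this is to invoke that $W^3(n+1)$ is already known to be a Lie algebra (it is a quotient of $W^3=W^q$ with $q=3$, defined in the excerpt), so the only real work is the uniqueness direction: showing that the freedom in $\beta$ is illusory because some Jacobi triple involving three generators of weight $\ge 3$ (which must match $W^3$ in the $W^q$-rescaled basis) forces $\beta$. This parallels Case 2 in the proof of Lemma \ref{2dextension} and the inconsistency argument of Lemma \ref{nocentralextensions}, and the bound $n\ge 14$ is exactly what guarantees enough room below level $n+1$ for such a triple to exist with all its lower products already determined. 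Once $\beta$ is fixed, the isomorphism with $W^3(n+1)$ is given by the identity on the canonical basis (after the rescaling $y_i=60(i-2)!e_i$ already recorded before the lemma), completing the proof.
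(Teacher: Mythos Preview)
Your plan is essentially the paper's own argument: observe that $W^3(n+1)$ is certainly one extension, reduce uniqueness to pinning down the new structure constants $\lambda_{r,n+1-r}$, handle $n$ odd via the antisymmetry constraint $\lambda_{(n+1)/2,(n+1)/2}=0$ together with the Leibniz recursion, and for $n$ even use the Jacobi identity $J(e_3,e_4,e_{n-6})=0$ to kill the remaining free parameter. The paper phrases all of this through the $\mathrm{m}^3_{0,r}(6+r;\bar\beta)$ framework and Lemma~\ref{generalext}, but the content is identical and your choice of Jacobi triple coincides with the paper's.

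There is, however, one genuine gap. You assert that the extra Jacobi relation ``pins $\beta$'' and that the bound $n\ge 14$ is needed so that ``all indices fall in the already-determined range.'' But the indices $3,4,n-6$ are distinct and in range already for $n\ge 11$; that is not the obstruction. The Jacobi identity $J(e_3,e_4,e_{n-6})=0$ yields a \emph{linear} equation in the free parameter, and what must be checked is that the coefficient of that parameter is nonzero. Writing $r=n-6$ and using \eqref{Wstructureconstants}, the paper computes this coefficient as
\[
1+\lambda_{r,3}-\lambda_{4,r}=1+60\,\frac{14-r-r^2}{(r-1)r(r+1)(r+2)},
\]
and verifies that it does not vanish for $r\ge 8$, i.e.\ $n\ge 14$. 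This computation cannot be skipped: for $r=6$ the expression is exactly zero, so the triple $(3,4,6)$ fails to determine the parameter when $n=12$, and your ``enough room'' explanation of the hypothesis $n\ge 14$ is incorrect. (A minor indexing slip: for $n$ even the free parameter should be $\lambda_{\lfloor (n+1)/2\rfloor,\lceil (n+1)/2\rceil}$, not $\lambda_{\lfloor n/2\rfloor,\lceil n/2\rceil}$, which would be $\lambda_{n/2,n/2}=0$.)
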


\begin{proof}
First of all, we note that $W^3(n+1)$ is a required central
extension of $W^3(n)$. We only need to show that it is unique.
Choose a basis for $W^3(n)$: $\{ e_1, e_3, \ldots, e_n \}$ with
$[e_1, e_i] =e_{i+1}$, $i=1,\ldots, n-1$, and $[e_i, e_j]
=\lambda_{i,j} e_{i+j}$, $i+j\le n$.  If we set $I =
\text{span}\{e_7, \ldots e_n\}$, then $I$ is an ideal of $W^3(n)$
and $W^3(n)/I$ is isomorphic to $\mathrm{m}^3_0(6)$. Therefore,
$W^3(n)$ is obtained from $\mathrm{m}^3_0(6)$ by taking
one-dimensional $\mathbb{N}$-graded filiform central extensions.
Let $n= 6 + r$ where $r\ge 1$.  Then $W^3(6+r)\cong
\mathrm{m}^3_{0,r}(6+r; \bar\beta)$ where $\bar\beta=(\beta_1,
\ldots, \beta_l)$, $l=\left[\frac{r+1}{2}\right] -1$. If $r$ is
odd, then by Lemma \ref{generalext} it has a unique
one-dimensional $\mathbb{N}$-graded filiform central extension. If
$r$ is even,  then by Lemma \ref{generalext} a one-dimensional
$\mathbb{N}$-graded filiform central extension  of
$\mathrm{m}^3_{0,r}(6+r; \bar\beta)$ is
$\mathrm{m}^3_{0,r+1}(6+r+1; \bar\beta')$ where
$\bar\beta'=(\beta_1, \ldots, \beta_l, \beta_{l+1})$. Set
$\beta_{l+1}=t$. To show the uniqueness we express $t$  in terms
of structure constants of $\mathrm{m}^3_{0,r}(6+r; \bar\beta)$
that are all known. As follows from (\ref{mainformulas1}), we have
that
\[ [e_s, e_{7+r-s}] = (-1)^{3-s}\binom{3-s+r}{3-s} +
\sum^{l}_{i=1} (-1)^{3+i-s}\binom{3-s+r-i}{3-s+i} \beta_i +
(-1)^{3-s+l} t. \] Hence,
\begin{equation}\label{structureconstants}
\lambda_{s,7+r-s} = (-1)^{3-s+l} t+A_s \end{equation} where $A_s$
depends on parameters of $\mathrm{m}^3_{0,r}(6+r; \bar\beta)$,
$s=3,\ldots, r/3+3.$ Since $\mathrm{m}^3_{0,r+1}(6+r+1;
\bar\beta')$ must be  a Lie algebra, the Jacobi identity $J(e_3,
e_4, e_{r})=0$ yields
\begin{equation}\label{lambdaeq}
\lambda_{3,4} \lambda_{7, r} + \lambda_{4, r} \lambda_{4+r, 3} +
\lambda_{r,3} \lambda_{r+3, 4} =0
\end{equation}
where $\lambda_{3,4}=1$, and $\lambda_{4,r}$, $\lambda_{r,3}$ are
known since $3+r<4+r<7+r$. Applying (\ref{structureconstants}),
$\lambda_{7, r} = (-1)^{l-4}t + A_7$, $\lambda_{3, 4+r} = (-1)^l t
+A_3$ and $\lambda_{4, r+3} = (-1)^{l-1} t + A_4$. Therefore,
(\ref{lambdaeq}) gives rise to a linear equation in $t$.  Applying
(\ref{Wstructureconstants}),  the coefficient of $t$ in this
equation is $1+ \lambda_{r,3} - \lambda_{4,r}= 1 +
60\frac{14-r-r^2}{(r-1)r(r+1)(r+2)}.$ When $r\ge 8$ it is nonzero.
Thus, $t$ is uniquely determined from (\ref{lambdaeq}). This
completes the proof.
\end{proof}

\beR\label{Wremark} It follows from the above lemmas that $W^3$ is
the only infinite-dimensional Lie algebra that can be obtained by
taking one-dimensional $\mathbb{N}$-graded filiform central
extensions of ${W}^3(n)$, $n\ge 14$. \eeR

\beR\label{kmorethan6} As follows from Lemma
\ref{nocentralextensions},  $\mathrm{m}^3_0$ is the only
infinite-dimensional Lie algebra that can be obtained by
considering one-dimensional $\mathbb{N}$-graded filiform central
extensions of $\mathrm{m}^3_0(2k)$ where $k>6$. \eeR

Further we have to consider the remaining cases: $k=3, 4, 5$ or 6.
We first deal with cases when $k=4, 5$ or 6.

\begin{Lemma}\label{kequals4}
Let $k=4$. Then $\mathrm{m}^3_0$ is the only infinite-dimensional
Lie algebra obtained by taking one-dimensional $\mathbb{N}$-graded
filiform central extensions of $\mathrm{m}^3_0(8)$.
\end{Lemma}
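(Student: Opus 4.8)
The plan is to follow, one central extension at a time, every chain of one-dimensional $\mathbb{N}$-graded filiform central extensions starting at $\mathrm{m}^3_0(8)$, and to show that the only infinite one is the tautological chain $\mathrm{m}^3_0(8)\subset\mathrm{m}^3_0(9)\subset\mathrm{m}^3_0(10)\subset\cdots$, whose union is $\mathrm{m}^3_0$. Note that here $q=3$ and $8=2k$ with $k=4$, so $k=q+1$: Lemma \ref{lemma2} still yields its part (1) (whenever the chain reaches dimension $2k+2$ one has $\lambda_{3,2k+1}=0$), but \emph{not} part (2), and the Vandermonde argument behind Lemma \ref{nocentralextensions} does not apply. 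This is why the case $k=4$ must be handled by an explicit computation.

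By Lemma \ref{onedimextensions} the one-dimensional filiform central extensions of $\mathrm{m}^3_0(8)$ are $\mathrm{m}^3_0(9)$ and $\mathrm{m}^3_{0,1}(9)$, and $\mathrm{m}^3_0(9)$ has the unique extension $\mathrm{m}^3_0(10)$. Hence a chain that never takes a nontrivial branch has union $\mathrm{m}^3_0$, while a chain that does take one, for the first time at some $\mathrm{m}^3_0(2m)\to\mathrm{m}^3_{0,1}(2m+1)$ with $m\ge5$, becomes — after the forced trivial steps $\mathrm{m}^3_0(8)\to\cdots\to\mathrm{m}^3_0(2m)$ — a chain starting from $\mathrm{m}^3_0(2m)$, which is covered by Remark \ref{kmorethan6} when $m>6$ and by the cases $k=5,6$ otherwise. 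So it remains only to exclude an infinite chain beginning $\mathrm{m}^3_0(8)\to\mathrm{m}^3_{0,1}(9)$.

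From that point on the chain is essentially forced. By Lemma \ref{2dextension} the next term must be $\mathrm{m}^3_{0,2}(10)$, and by Proposition \ref{marinaprop}(2) (the case $0<s<q$ even, with $s=2$) the one after is $\mathrm{m}^3_{0,3}(11;\beta_1)$ with $\beta_1$ a free scalar. By Lemma \ref{generalext} (with $s=3$ odd) its only possible extension is $\mathrm{m}^3_{0,4}(12;\beta_1)$, whose top-level structure constants are given by \eqref{mainformulas2}: there $\lambda_{3,9}=3\beta_1-4$, and the single nontrivial Jacobi identity in dimension $12$ reads $J(e_3,e_4,e_5)=(4-3\beta_1)e_{12}$, which forces $\beta_1=\frac{4}{3}$ (so $\lambda_{3,9}=0$, consistently with Lemma \ref{lemma2}(1)). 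Continuing in the same way one finds: $\mathrm{m}^3_{0,5}(13;(\frac{4}{3},\beta_2))$ is a Lie algebra for every $\beta_2$; $\mathrm{m}^3_{0,6}(14;(\frac{4}{3},\beta_2))$ only for $\beta_2=\frac{50}{33}$ (the identities $J(e_3,e_4,e_7)=0$ and $J(e_3,e_5,e_6)=0$ both give this value); and $\mathrm{m}^3_{0,7}(15;(\frac{4}{3},\frac{50}{33},\beta_3))$ only for $\beta_3=\frac{92}{33}$. At the next step there is no new parameter ($s=7$ is odd), so by \eqref{mainformulas2} the candidate extension $\mathrm{m}^3_{0,8}(16;(\frac{4}{3},\frac{50}{33},\frac{92}{33}))$ has $\lambda_{3,13}=\frac{40}{11}\ne0$, while $J(e_3,e_4,e_9)=-\lambda_{4,9}\lambda_{3,13}\,e_{16}$ with $\lambda_{4,9}=\beta_2-3=-\frac{49}{33}\ne0$; hence $J(e_3,e_4,e_9)\ne0$ and $\mathrm{m}^3_{0,8}(16;\cdot)$ is not a Lie algebra. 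Therefore $\mathrm{m}^3_{0,7}(15;(\frac{4}{3},\frac{50}{33},\frac{92}{33}))$ admits no one-dimensional filiform central extension, the chain terminates in dimension $15$, and no infinite $\mathrm{g}\ncong\mathrm{m}^3_0$ of this shape exists.

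The main obstacle is precisely the computation in the previous paragraph. In the regime $k>2q$ the obstruction shows up already at the $(2q-1)$st central extension and can be packaged as the nonsingularity of a Vandermonde-type matrix (Lemma \ref{nocentralextensions}); for $k=q+1$, by contrast, the parameters $\beta_i$ survive for several steps and get pinned down one at a time by Jacobi identities, the construction collapsing only at the $(2q+2)$nd central extension (dimension $16$). The work is thus the careful bookkeeping of these finitely many steps via the Leibniz relation \eqref{LR}, Definition \ref{MainConstruction} and the formulas \eqref{mainformulas1}, \eqref{mainformulas2}; I do not expect any conceptual difficulty beyond keeping the binomial coefficients straight.
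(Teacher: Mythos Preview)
Your proof is correct and follows essentially the same route as the paper: reduce to the branch $\mathrm{m}^3_0(8)\to\mathrm{m}^3_{0,1}(9)$, then determine $\beta_1,\beta_2,\beta_3$ step by step and show the chain dies at dimension $16$. The only differences are cosmetic choices of Jacobi identities: you pin down $\beta_1$ via $J(e_3,e_4,e_5)$ (which is exactly the computation behind Lemma~\ref{lemma2}(1)), and you derive the final contradiction from $J(e_3,e_4,e_9)$ using $\lambda_{3,9}=0$, whereas the paper uses $J(e_3,e_5,e_8)$; both yield the same nonzero obstruction (indeed $\lambda_{4,9}\lambda_{3,13}=\tfrac{49}{33}\cdot\tfrac{40}{11}$ matches the paper's residual term up to sign).
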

\begin{proof}
Consider $\mathrm{g}=\mathrm{m}^3_0(8)$. By Lemma
\ref{onedimextensions} its one-dimensional $\mathbb{N}$-graded
filiform central extension is either $\mathrm{m}^3_0(9)$ or
$\mathrm{m}^3_{0, 1}(9)$. If it is $\mathrm{m}^3_0(9)$, then
applying the same lemma again, its one-dimensional
$\mathbb{N}$-graded filiform central extension must be of type
$\mathrm{m}^3_0(10)= \mathrm{m}^3_0(2\cdot5)$ which leads to  the
case $k=5$ that will be covered in the next lemma. Without loss of
generality we assume that the second possibility holds. As follows
from Lemma \ref{lemma2} (1), $\lambda_{3,9} =0$ in
$\mathrm{m}^3_{0, 4}(12)$. Using formulas (\ref{mainformulas1}),
(\ref{mainformulas2}), we obtain
\[ \lambda_{3,9} = (-1)^{3-4}\left( \binom{4-3+3}{4-3} -
\binom{4-3+2}{4-3+1} \beta_1 \right) = 0\] Hence, $\beta_1 =
\frac{4}{3}$.  Next the Jacobi identity $J(e_3, e_4, e_7)=0$  in
$\mathrm{m}^3_{0,6}(14)$ is equivalent to
\begin{equation}\label{eq1} \lambda_{3,4}\lambda_{7,7} +
\lambda_{4,7}\lambda_{11,3} +\lambda_{7,3}\lambda_{10,4} =
0\end{equation} where $\lambda_{7,7} = 0$, $\lambda_{4,7} =
(-1)^{4-4}\left( \binom{4-4+2}{4-4} - \beta_1 \right) =
-\frac{1}{3}$, $\lambda_{3,7} = -2$, $\lambda_{3,11} =
-\binom{6}{1} + \binom{5}{2} \beta_1 - \binom{4}{3} \beta_2 = -6
+10 \beta_1 - 4\beta_2,$ $\lambda_{4,10} = \binom{5}{0} -
\binom{4}{1} \beta_1 + \binom{3}{2}\beta_2 = 1-4\beta_1
+3\beta_2$. Hence, (\ref{eq1}) gives rise to
$-\frac{1}{3}(6-10\beta_1 + 4\beta_2)+ 2(-1+4\beta_1 -3\beta_2)
=0$. Thus, $\beta_2 = \frac{50}{33}$. In $\mathrm{m}^3_{0,7}(15)$
we consider the Jacobi identity $J(e_3, e_4, e_8) = 0$ which is
equivalent to
\begin{equation}\label{eq2}
 -\lambda_{4,8} \lambda_{3,12} +
\lambda_{3,8} \lambda_{4,11} = 0
\end{equation}
where $\lambda_{3,8} = -\frac{5}{3}$, $\lambda_{4,8} =
-\frac{5}{3}$, $\lambda_{4,11}=1-5\beta_1+6\beta_2-\beta_3$,
$\lambda_{3,12} = -7+15\beta_1 - 10\beta_2+\beta_3$. Then
(\ref{eq2}) implies that $\beta_3 = \frac{92}{33}$. In
$\mathrm{m}^3_{0,8}(16)$ we consider the Jacobi identity $J(e_3,
e_5, e_8) = 0$ which is equivalent to
\begin{equation}\label{eq22}
 -\lambda_{5,8} \lambda_{3,13} +
\lambda_{3,8} \lambda_{5,11} = 0
\end{equation}
where $\lambda_{5,8} = -\frac{2}{11}, $ $\lambda_{3,13} =
-8+21\beta_1-20\beta_2+5\beta_3 = \frac{40}{11}$, $\lambda_{5,11}
=\frac{40}{11}$. Substituting these values into (\ref{eq22}) we
obtain that the left side of it is nonzero, a contradiction.

The proof is complete.
\end{proof}

\begin{Lemma}\label{kequals5}
Let $k=5$. Then $\mathrm{m}^3_0$ is the only infinite-dimensional
Lie algebra obtained by taking one-dimensional $\mathbb{N}$-graded
filiform central extensions of $\mathrm{m}^3_0(10)$.
\end{Lemma}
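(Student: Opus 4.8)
The plan is to imitate the proof of Lemma~\ref{kequals4}, collapsing the chain of one-dimensional $\mathbb{N}$-graded filiform central extensions of $\mathrm{m}^3_0(10)$ to a short finite computation; the extra twist here is that $k=5<2q=6$, so Lemmas~\ref{q-conditions} and \ref{nocentralextensions} are unavailable and the argument must be carried out by hand. First note that $\mathrm{m}^3_0$ does arise (from the chain $\mathrm{m}^3_0(10)\subset\mathrm{m}^3_0(11)\subset\mathrm{m}^3_0(12)\subset\cdots$), so only uniqueness is at stake. By Lemma~\ref{onedimextensions}, the one-dimensional $\mathbb{N}$-graded filiform central extension of $\mathrm{m}^3_0(10)$ is either $\mathrm{m}^3_0(11)$ or $\mathrm{m}^3_{0,1}(11)$; in the first case a second application of Lemma~\ref{onedimextensions} forces the next term to be $\mathrm{m}^3_0(12)=\mathrm{m}^3_0(2\cdot6)$, which reduces the claim to the case $k=6$ treated in the next lemma. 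So it remains to exclude any infinite chain passing through $\mathrm{m}^3_{0,1}(11)$.

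Assume such a chain exists. Then Lemmas~\ref{2dextension} and \ref{generalext} (with Proposition~\ref{proposition}) force its successive terms to be $\mathrm{m}^3_{0,2}(12)$, $\mathrm{m}^3_{0,3}(13;\beta_1)$, $\mathrm{m}^3_{0,4}(14;\beta_1)$, $\mathrm{m}^3_{0,5}(15;\beta_1,\beta_2)$ and $\mathrm{m}^3_{0,6}(16;\beta_1,\beta_2)$, for suitable scalars $\beta_1,\beta_2$, every one of them a genuine Lie algebra. Since $k=5>q+1=4$, both parts of Lemma~\ref{lemma2} apply to the pair $\mathrm{m}^3_{0,4}(14)$, $\mathrm{m}^3_{0,5}(15)$ and give $\lambda_{3,11}=0$ and $\lambda_{3,12}=0$. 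The next step is routine binomial bookkeeping: reading these two structure constants off the defining relations \eqref{mainformulas2} and \eqref{mainformulas1}, one finds $\lambda_{3,11}=10-4\beta_1$ and $\lambda_{3,12}=15-10\beta_1+\beta_2$, so the values $\beta_1=5/2$ and $\beta_2=10$ are forced.

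The closing move is to apply Lemma~\ref{lemma3} with $s=2$ to the Lie algebra $\mathrm{m}^3_{0,6}(16)$: as $\lambda_{3,12}=0$, it yields $\lambda_{3,13}=0$ \emph{or} $\lambda_{4,9}+\lambda_{3,9}=0$. I would dispose of the second alternative using the already-determined constants $\lambda_{3,9}=3$ (from Lemma~\ref{2dextension}) and $\lambda_{4,9}=\beta_1-3=-1/2$ (from the definition of $\mathrm{m}^3_{0,3}(13;\beta_1)$), which give $\lambda_{4,9}+\lambda_{3,9}=5/2\neq0$; hence $\lambda_{3,13}=0$ is forced. But \eqref{mainformulas2} computes $\lambda_{3,13}=21-20\beta_1+5\beta_2$, which equals $21\neq0$ at the values just obtained --- a contradiction. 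Therefore no infinite chain runs through $\mathrm{m}^3_{0,1}(11)$, and $\mathrm{m}^3_0$ is the only infinite-dimensional Lie algebra obtained from $\mathrm{m}^3_0(10)$.

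I expect the obstacle to be organizational rather than structural: keeping the indices of the iterated extensions straight --- which parameter $\beta_i$ enters at which step, and in which graded component each $\lambda_{i,j}$ lives --- so that the dichotomy of Lemma~\ref{lemma3} gets resolved toward $\lambda_{3,13}=0$. The one genuinely load-bearing numerical fact is $\lambda_{4,9}+\lambda_{3,9}\neq0$; granted that, the contradiction drops out of the explicit formula for $\lambda_{3,13}$, just as the computation of $\beta_3$ did in the proof of Lemma~\ref{kequals4}.
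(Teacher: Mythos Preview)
Your proof is correct and follows the same overall strategy as the paper: reduce to the branch through $\mathrm{m}^3_{0,1}(11)$, use Lemma~\ref{lemma2} to pin down $\beta_1=5/2$ and $\beta_2=10$, then derive a contradiction at the level of $\mathrm{m}^3_{0,6}(16)$. The only difference is in the final step: the paper computes the Jacobi identity $J(e_3,e_5,e_8)$ directly and finds it nonzero, whereas you route the same contradiction through Lemma~\ref{lemma3} (which internally uses $J(e_3,e_4,e_9)$) to force $\lambda_{3,13}=0$ and then observe $\lambda_{3,13}=21$. Both arguments live in the same algebra and use the same determined values of $\beta_1,\beta_2$; yours is arguably tidier in that it reuses an existing lemma rather than a bespoke Jacobi check, but the content is the same.
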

\begin{proof}
Consider $\mathrm{g}=\mathrm{m}^3_0(10)$. By Lemma
\ref{onedimextensions} its one-dimensional $\mathbb{N}$-graded
filiform central extension is either $\mathrm{m}^3_0(11)$ or
$\mathrm{m}^3_{0, 1}(11)$. If it is $\mathrm{m}^3_0(11)$, then
applying the same lemma again, its one-dimensional
$\mathbb{N}$-graded filiform central extension must be of type
$\mathrm{m}^3_0(12)= \mathrm{m}^3_0(2\cdot 6)$ which leads to  the
case $k=6$  covered in the next lemma. Without loss of generality
we assume that the second possibility holds. As follows from Lemma
\ref{lemma2}, $\lambda_{3,11} =0$ in $\mathrm{m}^3_{0, 4}(14)$ and
$\lambda_{3,12}=0$ in $\mathrm{m}^3_{0, 5}(15)$. Using formulas
(\ref{mainformulas1}), (\ref{mainformulas2}), we obtain
$\lambda_{3,11} = (-1)^{3-5} \left( \binom{5-3+3}{5-3} -
\binom{5-3+2}{5-3+1} \beta_1\right) = 10 - 4\beta_1 =0$. Hence,
$\beta_1 = \frac{5}{2}.$ Next, $\lambda_{3,12} = (-1)^{3-5} \left(
\binom{5-3+4}{5-3} - \binom{5-3+3}{5-3+1} \beta_1 +\beta_2 \right)
= 15 - 10\beta_1+\beta_2 =0$. Hence, $\beta_2 = 10.$ The Jacobi
identity $J(e_3, e_5, e_8)=0$ implies
\begin{equation}\label{eq3}
\lambda_{3,5}\lambda_{8,8} + \lambda_{5,8} \lambda_{13, 3} +
\lambda_{8,3} \lambda_{11,5} =0
\end{equation}
where $\lambda_{8,8}=0$, $\lambda_{5,8} = (-1)^{5-5} \left(
\binom{0+2}{0} - \beta_1 \right) = -\frac{3}{2},$ $\lambda_{3,8}
=1$, $\lambda_{3,13} = \binom{7}{2} - \binom{6}{3}\beta_1 +
\binom{5}{4} \beta_2 = 21-20\beta_1+5\beta_2 = 21$, and
$\lambda_{5,11} = (-1)^0 \left( \binom{0+5}{0}-\binom{0+4}{1}
\beta_1 +\binom{3}{2} \beta_2\right) = 1-4\beta_1+3\beta_2= 21.$
However, the left side of (\ref{eq3}) does not equal to 0, a
contradiction. This completes the proof.
\end{proof}

\begin{Lemma}\label{kequals6}
Let $k=6$. Then  $\mathrm{m}^3_0$ is the only infinite-dimensional
Lie algebra obtained by taking one-dimensional $\mathbb{N}$-graded
filiform central extensions of $\mathrm{m}^3_0(12)$.
\end{Lemma}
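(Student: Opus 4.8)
The plan is to mimic the strategy used for $k=4$ and $k=5$ in Lemmas~\ref{kequals4} and~\ref{kequals5}, namely: start from $\mathrm{m}^3_0(12)=\mathrm{m}^3_0(2\cdot 6)$, take successive one-dimensional $\mathbb{N}$-graded filiform central extensions, and show that the only way to continue indefinitely forces all new structure constants to vanish, i.e.\ the extension is always $\mathrm{m}^3_0$ itself. Since $k=6$ is exactly the boundary case $k>2q$ fails to reach ($2q=6$), Lemma~\ref{nocentralextensions} does not apply and we cannot invoke Lemma~\ref{lemma2}(2) freely; hence the argument must be hands-on as in the previous two lemmas. First I would observe that by Lemma~\ref{onedimextensions} the first extension is $\mathrm{m}^3_0(13)$ or $\mathrm{m}^3_{0,1}(13)$; if it is $\mathrm{m}^3_0(13)$ then its extension is forced to be $\mathrm{m}^3_0(14)=\mathrm{m}^3_0(2\cdot 7)$, reducing us to the case $k=7>2q=6$ already handled in Remark~\ref{kmorethan6} (via Lemma~\ref{nocentralextensions}). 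So without loss of generality the extension is $\mathrm{m}^3_{0,1}(13)$, and we are in the situation of building $\mathrm{m}^3_{0,s}(12+s;\bar\beta)$ with $k=6$.

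Next I would use Lemma~\ref{lemma2}(1) (which needs only $k>q$, so $k=6>3$ is fine) to get $\lambda_{3,13}=0$ in $\mathrm{m}^3_{0,4}(16)$; note Lemma~\ref{lemma2}(2) requires $k>q+1=4$, which holds, so also $\lambda_{3,14}=0$ in $\mathrm{m}^3_{0,5}(17)$. Plugging into the explicit formulas (\ref{mainformulas1}),(\ref{mainformulas2}) with $q=3$, $k=6$, these two linear equations pin down $\beta_1$ and $\beta_2$ to definite numerical values. Then, exactly as in the proofs of Lemmas~\ref{kequals4},~\ref{kequals5}, I would impose Jacobi identities of the form $J(e_3,e_j,e_{m-3-j})=0$ at successive extension stages $\mathrm{m}^3_{0,s}(12+s;\bar\beta)$ for $s=6,7,8,\dots$ — each such identity, after substituting the already-determined $\beta_i$, becomes a single linear equation that either determines the next parameter $\beta_i$ or produces a numerical contradiction. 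Since the structure constants $\lambda_{3,j}$ coming from the "$\mathrm{m}_3$-part" equal $1$ and the higher $\lambda_{i,j}$ with $i,j>3$ vanish below the top degree (because $i+j$ stays $<2k=12$ for the small indices involved), the coefficients in these linear equations are explicit rationals.

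The main obstacle — and the real content of the lemma — is to show that this process \emph{cannot} go on forever consistently: at some finite stage a Jacobi identity becomes inconsistent (a nonzero constant $=0$), exactly as happened at stage $\mathrm{m}^3_{0,8}(16)$ for $k=4$ and at stage $J(e_3,e_5,e_8)$ for $k=5$. Concretely I expect that after forcing $\beta_1=\tfrac{3}{2}$, $\beta_2=\dots$ from $\lambda_{3,13}=\lambda_{3,14}=0$ and then $\beta_3,\beta_4,\dots$ from a chain of Jacobi identities $J(e_3,e_4,e_m)=0$, $J(e_3,e_5,e_m)=0$, the next available identity (something like $J(e_3,e_5,e_9)=0$ or $J(e_4,e_5,e_8)=0$ in $\mathrm{m}^3_{0,9}(21)$ or so) will have all its $\lambda$'s already determined with no free parameter left, and its left-hand side evaluates to a nonzero rational. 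That contradiction shows $\mathrm{m}^3_{0,2q}(\cdots)$ with the nonzero $\beta_1$ branch is not a Lie algebra, so the only surviving branch is $\beta_1=0$, forcing all $\beta_i=0$ and $\mathrm{g}\cong\mathrm{m}^3_0$. The bookkeeping of which Jacobi triples to use and verifying the final non-vanishing is the delicate part; it is a finite but somewhat intricate computation of the same flavor as Lemmas~\ref{kequals4} and~\ref{kequals5}, just with one or two more stages because $k=6$ is larger.
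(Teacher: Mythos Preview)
Your overall setup is correct and matches the paper: reduce to the $\mathrm{m}^3_{0,1}(13)$ branch, then use Lemma~\ref{lemma2} (both parts apply since $k=6>q+1=4$) to force $\lambda_{3,13}=\lambda_{3,14}=0$. However, from here your plan diverges from the paper's and is considerably longer than necessary.

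The paper observes that, although the \emph{statement} of Lemma~\ref{q-conditions} assumes $k>2q$, its \emph{proof} goes through for $k=6$ as well to yield the additional constraint $\lambda_{3,15}=0$ (the inequalities $2q+2s-1<2k$ and $k-1-q\ne 1$ used in the inductive step hold for $q=3$, $k=6$, $s\le 2$). This gives \emph{three} linear equations
\[
20-5\beta_1=0,\qquad 35-15\beta_1+\beta_2=0,\qquad 56-35\beta_1+6\beta_2=0
\]
in the \emph{two} unknowns $\beta_1,\beta_2$, and one checks directly that the system is inconsistent ($\beta_1=4$, $\beta_2=25$ from the first two, but then $56-140+150=66\ne 0$). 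The contradiction occurs already at stage $\mathrm{m}^3_{0,6}(18)$, so there is no need to push on to $s=7,8,9,\ldots$ as you propose.

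Your longer route---solve for $\beta_1,\beta_2$ and then keep imposing Jacobi identities at higher stages until something breaks---would in principle also work, but you do not carry it out, and two details in your sketch are off: (i) the first equation gives $\beta_1=4$, not $\tfrac{3}{2}$; (ii) your closing sentence about ``the only surviving branch is $\beta_1=0$'' is mistaken---there is no free choice of $\beta_1$ here. The contradiction simply kills the $\mathrm{m}^3_{0,1}(13)$ branch entirely, forcing the first extension to be $\mathrm{m}^3_0(13)$ and hence reducing to $k=7$, already covered by Remark~\ref{kmorethan6}.
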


\begin{proof}
Consider $\mathrm{g}=\mathrm{m}^3_0(12)$. By Lemma
\ref{onedimextensions} its one-dimensional $\mathbb{N}$-graded
filiform central extension is either $\mathrm{m}^3_0(13)$ or
$\mathrm{m}^3_{0, 1}(13)$. If it is $\mathrm{m}^3_0(13)$, then
applying the same lemma again, its one-dimensional
$\mathbb{N}$-graded filiform central extension must be of type
$\mathrm{m}^3_0(14)= \mathrm{m}^3_0(2\cdot 7)$ which leads to  the
case $k>6$  covered in the Remark \ref{kmorethan6}. Without loss
of generality we assume that the second possibility holds. As
follows from Lemma \ref{lemma2}, $\lambda_{3,13} =0$ in
$\mathrm{m}^3_{0, 4}(16)$ and $\lambda_{3,14}=0$ in
$\mathrm{m}^3_{0, 5}(17)$. It follows from the proof of Lemma
\ref{q-conditions} that in this case $\lambda_{3, 2\cdot 6 + 3} =
\lambda_{3, 15}$ is also zero.
 Using formulas (\ref{mainformulas1}),
(\ref{mainformulas2}), we obtain
\[ \lambda_{3,13} = (-1)^{3-6} \left( \binom{6-3+3}{6-3} -
\binom{6-3+2}{6-3+1} \beta_1 \right) =
-\binom{6}{3}+\binom{5}{4}\beta_1 = 0,\]
\[\lambda_{3,14} = (-1)^{3-6} \left( \binom{6-3+4}{6-3} -
\binom{6-3+3}{6-3+1} \beta_1 +\beta_2\right) =
-\binom{7}{3}+\binom{6}{4}\beta_1 -\beta_2 = 0,\]

\[\lambda_{3,15} = (-1)^{3-6} \left( \binom{6-3+5}{6-3} -
\binom{6-3+4}{6-3+1} \beta_1 +\binom{6-3+3}{6-3+2}\beta_2\right) =
-\binom{8}{3}+\binom{7}{4}\beta_1 - 6\beta_2 = 0.\] However, this
system of 3 linear equations in 2 variables is inconsistent. The
proof is complete.

\end{proof}

Finally we study one-dimensional $\mathbb{N}$-graded filiform
central extensions of $\mathrm{m}^3_0(6)$. By Lemma
\ref{onedimextensions} its one-dimensional $\mathbb{N}$-graded
filiform central extension is either $\mathrm{m}^3_0(7)$ or
$\mathrm{m}^3_{0, 1}(7)$. If it is $\mathrm{m}^3_0(7)$, then
applying the same lemma again, its one-dimensional
$\mathbb{N}$-graded filiform central extension must be of type
$\mathrm{m}^3_0(8)= \mathrm{m}^3_0(2\cdot 4)$ which leads to  the
case $k=4$  covered in Lemma \ref{kequals4}. Without loss of
generality we assume that the second possibility holds.
 Taking one-dimensional filiform central extensions of
 $\mathrm{m}^3_0(6)$, in ten steps we obtain $\mathrm{m}^3_{0,
 10}(16; \beta_1, \beta_2, \beta_3, \beta_4)$. Since it must be a
 Lie algebra  we have that $J(e_3, e_4, e_5)= 0$, $J(e_3, e_5,
 e_6)=0$ and also $J(e_3, e_4, e_8)=0$. Re-writing these
 identities in terms of structure constants, and then using
 formulas (\ref{mainformulas1}), (\ref{mainformulas2}) to express
 each structure constant in terms of $\beta_1, \beta_2, \beta_3$
 and $\beta_4$ we obtain the following relations:
 \begin{equation}\label{betarelations}
\beta_2 =\frac{4\beta^2_1}{3(1+\beta_1)},\,\,
\beta_3=\frac{5\beta_1\beta_2-10\beta^2_2}{3-4\beta_2-2\beta_1},\,\,
\beta_4=\frac{-5\beta^2_2 +
6\beta_2\beta_3+4\beta_1\beta_3}{2\beta_1+\beta_2}.
 \end{equation}
Considering the Jacobi identity $J(e_3, e_4, e_9)=0$  we obtain
\begin{equation}\label{eq4}
\beta_4 +(\beta_3 -3\beta_2
+\beta_1)(-1+8\beta_1-21\beta_2+20\beta_3-5\beta_4)+(3\beta_2-4\beta_1+1)(\beta_1-6\beta_2+10\beta_3-4\beta_4)=0.
\end{equation}
Using (\ref{betarelations}), the equation (\ref{eq4}) can be
re-written as the following equation in one variable $\beta_1$:
\begin{equation}\label{eq5}
245\beta^{10}_1 + 238 \beta^9_1 - 606\beta^8_1 +270 \beta^7_1 -27
\beta^6_1 =0
\end{equation}
that has the following roots: $\beta_1=0, \frac{3}{5},
\frac{1}{7},$ and $\frac{-6\pm 3\sqrt{11}}{7}.$ If $\beta_1=
\frac{3}{5}$ or $\frac{-6\pm 3\sqrt{11}}{7}$ then it is easy to
check that $J(e_3, e_5, e_8)\ne 0$. If $\beta_1=0$, then
$\mathrm{m}^3_{0, 9}(15; \beta_1, \beta_2, \beta_3, \beta_4)$ is
isomorphic to $\mathrm{m}_3(15)$, and by Remark \ref{Mremark}
leads to infinite-dimensional Lie algebra $\mathrm{m}_3$. If
$\beta_1= \frac{1}{7}$, then  $\mathrm{m}^3_{0, 9}(15; \beta_1,
\beta_2, \beta_3, \beta_4)$ is isomorphic to $W^3(15)$, and  by
Lemma \ref{Wextensions}, it leads to $W^3$.

\appendix{}

\section{}
\label{app1}

Here we recall some facts about topology of varieties which are
relevant to proofs in Section \ref{topology}.

\subsection{The case of smooth varieties}
Let $X_d\subset\Pr^n$ be a smooth complex projective hypersurface
of degree $d$, i.e. the set of points satisfying
$\{f_d(x_0,\dots,x_n)=0\}$, where $f_d$ is a homogeneous
polynomial of degree $d$, defining $X_d$. (This in particular
implies that $f_d$ is an irreducible polynomial, $X_d$ is an
irreducible algebraic variety.) Any two such hypersurfaces (of the
same degree) are diffeomorphic as smooth manifolds. In particular,
their topological invariants (and some of their geometric
invariants) are determined by the pair $(d,n)$.
\bprop \cite[Ch. 5]{Dimca:92}
The integral homologies are torsion free and satisfy:
$H_{2i+1}(X_d,\Z)=0$ for $2i+1\neq n-1$, $H_{2i}(X_d,\Z)=\Z$ for
$2n-2\ge 2i\ge0$ and $2i\neq n-1$  and
 $b_{n-1}(X_d)=\frac{(1-d)^{n+1}-1}{d}+\Big\{\begin{array}{l} -1:\ n-even\\2:\ n-odd\end{array}$.
\\The topological Euler characteristic is $\chi(X_d)=\frac{(1-d)^{n+1}-1}{d}+n+1$.
\eprop

\bex For $n=2$ we get a smooth plane curve of (topological)
genus $g=\binom{d-1}{2}$ and with topological Euler characteristic
$\chi(X_d)=2-2g$. \eex

Let $\{X_i\}_{i=1,\dots,k}\subset\Pr^n$ be some  hypersurfaces.
Assume $X:=\capl^k_{i=1}X_i$ is a complete intersection.
Let $d_j$ denote the degree of $X_j$.

As in the case of hypersurfaces, any two {\em smooth} complete
intersections with the same multi-degree $(d_1,\dots,d_k)$ are diffeomorphic. So,
the topological invariants (and some geometric invariants too) are
completely determined by the numbers $(d_1,\dots,d_k,n)$.
\bprop
Let $X_{\ud}\subset\Pr^n$ be a smooth complete intersection of
multidegree $\ud:=(d_1,\dots,d_k)$.
The integral homologies are torsion free and satisfy:
$H_{2i+1}(X_\ud,\Z)=0$ for $2i+1\neq n-k$, $H_{2i}(X_\ud,\Z)=\Z$
for $2n-2k\ge 2i\ge0$ and $2i\neq n-k$  and
 $rank\Big(H_{n-k}(X_\ud,\Z)\Big)=\chi(X_\ud)-(n-k)$ for $(n-k)$ even,
  $rank\Big(H_{n-k}(X_\ud,\Z)\Big)=n-k+1-\chi(X_\ud)$ for $(n-k)$ odd.
 Here the topological Euler characteristic, $\chi(X_\ud)=(\prodl^k_{i=1}d_i)Coeff_{x^{n-k}}\frac{(1+x)^{n+1}}{\prodl^k_{i=1}(1+d_ix)}$.

\eprop
References for this are \cite[Ch. 5]{Dimca:92} and \cite[Appendix 1]{hirzebruch:66}.
The proof is based on the Lefschetz
hyperplane section theorem.
The Euler characteristic can be obtained e.g. as the top Chern
class $c_n(T_{X_\ud})$ from the exact sequence $0\to T_{X_\ud}\to
T_{\P^n}|_{X_\ud}\to\cN_{{X_\ud}/\P^n}\to0$ and the fact that
$\cN_{{X_\ud}/\P^n}=\oplus\cO_{\P^n}(d_i)|_{X_\ud}$.

\

Note that in these statements the varieties are assumed to be {\em
smooth}. For singular varieties the situation is more complicated
and odd homologies can be non-zero.

\subsection{Algebraic cell structure}
Let $X$ be a compact topological space and let $X=\coprod\sigma_\alpha$ be
a cell decomposition (each cell $\sigma_\alpha$ is
homeomorphic to some $\R^k$, the smaller cells are glued to the
bigger ones by the boundary maps). \bex 1. $S^n=\R^n\cup R^0$.
\\2. For the complex projective space $\Pr^n$ we have  the following cell
decomposition: $\Pr^n=\C^n\cup\C^{n-1}\cup\dots\cup\C^0$
(see e.g. \cite[1.5]{griffiths:94}). \eex If
$X$ is a complex algebraic variety (possibly singular) then it is
natural to ask for an {\em algebraic} cell structure, so that each
cell, $\sigma_\alpha$, is itself a subvariety of $X$, isomorphic
to $\C^n$. For example, the cell structure of $\Pr^n$ is of this
type. Though such an algebraic cell decomposition is not always
possible, it often occurs in our context.

Suppose a variety admits an algebraic cell decomposition,
$X=\coprod (k_i\C^i)$, here $k_i$ is the number of cells of the
given dimension. This fixes the homology. Indeed, all the cells
are of even (real) dimensions. Thus all the boundary maps are
zero.  So, $H_{2i}(X,\Z)=\Z^{k_i}$ for $0\le 2i\le \dim_{\R}X$
and $H_{m}(X,\Z)=0$ for other $m$.

\subsection{Exact sequence of a pair}\label{Sec.Exact.Sequence.of.Pair}
For a pair of topological spaces, $A\sset X$, there are exact
sequences: \beq \cdots\to H_i(A,\Z)\to H_i(X,\Z)\to H_i(X,A,\Z)\to
H_{i-1}(A,\Z)\cdots,\quad \cdots\to H^i(X,\Z)\to H^i(A,\Z)\to
H^{i+1}(X,A,\Z)\to \cdots,\quad \eeq

\subsection{(Co)homology of non-compact spaces}
\bprop\cite[pg.157]{ffuks}\label{Thm.Cohom.Non.Compact.Spaces}
Let $X$ be a compact topological space, let $A\sset X$ be its
closed subspace, such that $X\smin A$ is smooth, orientable,
without boundary, of (real) dimension $n$. Then $H_i(X,A,\Z)\isom
H^{n-i}(X\smin A,\Z)$ \eprop (This follows from the definition of
Borel-Moore homology, via compactification: $H_i^{BM}(X\smin
A,\Z)=H_i(X,A,\Z)$ and the Poincare duality for non-compact
manifolds: $H^{BM}_i(M,\Z)\isom H^{n-i}(M,\Z)$.)

\section{}
\label{app2}

Let us recall Proposition 5.16 from \cite{mill:04}:
\par\medskip
\par\medskip
\noindent {\bf Proposition 5.16} {\it Let $2k+3 \ge 9$ then
$H^2_{(2k+4)}(\mathrm{m}_{0,3}(2k+3))=0$ and therefore
$\mathrm{m}_{0,3}(2k+3)$ has  no filiform central extensions.}
\par\medskip
\par\medskip
In the above proposition $\mathrm{m}_{0,3}(2k+3)$ denotes a
$(2k+3)$-dimensional $\mathbb{N}$-graded filiform Lie algebra with
the basis: $e_1, e_2,\ldots, e_{2k+3}$ and multiplication table:
\[ [e_1, e_i] =e_{1+i}, \, i=2,\ldots, 2k+2;\]
\[ [e_l, e_{2k+1-l}] =(-1)^{l+1}e_{2k+1}, \, l=2,\ldots, k;\]
\[ [e_j, e_{2k+2-j}] = (-1)^{j+1} (k-j+1) e_{2k+2},\, j=2,\ldots,
k;\]
\[ [e_m, e_{2k+3-m}] = (-1)^m ( (m-2)k - \frac{(m-2)(m-1)}{2} )
e_{2k+3}, \, m=3,\ldots, k+1.\]

In fact, this proposition holds true for any $k>3$. If $k=3$, then
$\mathrm{m}_{0,3}(9)$ does have a filiform central extension which
is $\mathrm{m}_{0,4}(10)$ with the basis: $e_1, e_2,\ldots, e_9,
e_{10}$ and multiplication table:
\begin{align*}
& [e_1, e_i] = e_{1+i}, \, i=2,\ldots 9,\\
& [e_2, e_5] = -e_7,\, [e_3, e_4] = e_7,\\
& [e_2, e_6] = -2e_8,\, [e_3, e_5] = e_8,\\
& [e_3, e_6] = -2e_9,\, [e_4, e_5] = 3e_9,\\
& [e_4, e_6] = 3e_{10}, \, [e_3, e_7] = -5 e_{10},\,
[e_2,e_8] = 5e_{10}.\\
\end{align*}

In its turn, $\mathrm{m}_{0,4}(10)$ has also filiform central
extension, which is  $\mathrm{m}_{0,5}(11)=\text{span}\{e_1,e_2,
\ldots, e_{10}, e_{11}\}$ with the following multiplication table:

\begin{align*}
& [e_1, e_i] = e_{1+i}, \, i=2,\ldots 10,\\
& [e_2, e_5] = -e_7,\, [e_3, e_4] = e_7,\\
& [e_2, e_6] = -2e_8,\, [e_3, e_5] = e_8,\\
& [e_3, e_6] = -2e_9,\, [e_4, e_5] = 3e_9,\\
& [e_4, e_6] = 3e_{10}, \, [e_3, e_7] = -5 e_{10},\,
[e_2,e_8] = 5e_{10},\\
& [e_3, e_8] = \frac{5}{2} e_{11}, \, [e_2, e_9] = \frac{5}{2}
e_{11},\, [e_4,e_7] = -\frac{15}{2}e_{11},\, [e_5, e_6]=\frac{21}{2} e_{11}.\\
\end{align*}

The latter algebra has no filiform central extensions.

\end{document}